\DeclareFontFamily{U}{mathx}{\hyphenchar\font45}
\DeclareFontShape{U}{mathx}{m}{n}{
      <5> <6> <7> <8> <9> <10>
      <10.95> <12> <14.4> <17.28> <20.74> <24.88>
      mathx10
      }{}
\DeclareSymbolFont{mathx}{U}{mathx}{m}{n}
\DeclareMathAccent{\widecheck}{0}{mathx}{"71}
\DeclareMathAccent{\wideparen}{0}{mathx}{"75}
\newcommand{\hooklongrightarrow}{\lhook\joinrel\longrightarrow}
\newtheorem*{rep@theorem}{\rep@title}
\newcommand{\newreptheorem}[2]{%
\newenvironment{rep#1}[1]{%
 \def\rep@title{#2 \ref{##1}}%
 \begin{rep@theorem}}%
 {\end{rep@theorem}}}
\newtheorem{thm}{Theorem}[section]
\newtheorem{thmA}{Theorem}
\newtheorem{lem}[thm]{Lemma}
\newtheorem{prop}[thm]{Proposition}
\newtheorem{cor}[thm]{Corollary}
\newtheorem*{thm*}{Theorem}
\newtheorem*{claim*}{Claim}
\newenvironment{customthm}[1]
  {\innercustomthm}
  {\endinnercustomthm}
\theoremstyle{definition}
\newtheorem{defi}[thm]{Definition}
\newtheorem{exam}[thm]{Example}
\newtheorem{rem}[thm]{Remark}
\newcommand{\mc}[1]{\mathcal{#1}}
\newcommand{\mb}[1]{\mathbb{#1}}
\newcommand{\mr}[1]{\mathrm{#1}}
\newcommand{\be}{\begin{align*}}
\newcommand{\ee}{\end{align*}}
\newcommand{\C}{\mathbb{C}}
\newcommand{\N}{\mathbb{N}}
\newcommand{\R}{\mathbb{R}}
\newcommand{\Z}{\mathbb{Z}}
\newcommand{\F}{\mathbb{F}}
\newcommand{\la}{\langle}
\newcommand{\ra}{\rangle}
\renewcommand{\epsilon}{\varepsilon}
\renewcommand{\phi}{\varphi}
\renewcommand{\tilde}{\widetilde}
\renewcommand{\hat}{\widehat}
\renewcommand{\Re}{\mr{Re}}
\newcommand{\Cs}{\ensuremath{C^*}}
\newcommand{\cb}{\mr{cb}}
\newcommand{\WA}{\mr{WA}}
\newcommand{\WH}{\mr{WH}}
\newcommand{\conv}{\mr{conv}}
\newcommand{\tensor}{\otimes}
\newcommand{\vntensor}{\bar\otimes}
\newcommand{\II}{\mr{II}}
\DeclareMathOperator{\SL}{SL}
\DeclareMathOperator{\SO}{SO}
\DeclareMathOperator{\SU}{SU}
\DeclareMathOperator{\Sp}{Sp}
\DeclareMathOperator{\id}{id}
\DeclareMathOperator{\supp}{supp}
\numberwithin{equation}{section}
\begin{document}
\selectlanguage{english} 

\begin{abstract}
We introduce the weak Haagerup property for locally compact groups and prove several hereditary results for the class of groups with this approximation property. The class contains a priori all weakly amenable groups and groups with the (usual) Haagerup property, but examples are given of groups with the weak Haagerup property which are not weakly amenable and do not have the Haagerup property.

In the second part of the paper we introduce the weak Haagerup property for finite von Neumann algebras, and we prove several hereditary results here as well. Also, a discrete group has the weak Haagerup property if and only if its group von Neumann algebra does.

Finally, we give an example of two $\II_1$ factors with different weak Haagerup constants.
\end{abstract}


\title{The weak Haagerup property}
\author{S{\o}ren Knudby}
\thanks{Supported by ERC Advanced Grant no.~OAFPG 247321 and the Danish National Research Foundation through the Centre for Symmetry and Deformation (DNRF92).}
\address{Department of Mathematical Sciences, University of Copenhagen,
\newline Universitetsparken 5, DK-2100 Copenhagen \O, Denmark}
\email{knudby@math.ku.dk}
\date{\today}
\maketitle
\tableofcontents

\newpage
\section{Introduction}\label{sec:introduction}
In connection with the famous Banach-Tarski paradox, the notion of an amenable group was introduced by von Neumann \cite{amenable}, and since then the theory of amenable groups has grown into a huge research area in itself (see the book \cite{MR767264}). Today, we know that amenable groups can be characterized in many different ways, one of which is the following. A locally compact group $G$ is amenable if and only if there is a net $(u_\alpha)_{\alpha\in A}$ of continuous compactly supported positive definite functions on $G$ such that $u_\alpha \to 1$ uniformly on compact subsets of $G$ (see \cite[Chap.~2, Sec.~8]{MR767264}). When formulated like this, amenability is viewed as an approximation property, and over the years several other (weaker) approximation properties resembling amenability have been studied. For a combined treatment of the study of such approximation properties we refer to \cite[Chapter~12]{MR2391387}. We mention some approximation properties below and relate them to each other (see Figure~\ref{fig:aps}).

Recall that a locally compact group $G$ is \emph{weakly amenable}, if there is a net $(u_\alpha)$ of compactly supported Herz-Schur multipliers on $G$, uniformly bounded in Herz-Schur norm, such that $u_\alpha\to 1$ uniformly on compacts. The least uniform bound on the norms of such nets (if such a bound exists at all) is the weak amenability constant of $G$. We denote the weak amenability constant (also called the Cowling-Haagerup constant) by $\Lambda_\WA(G)$. The notation $\Lambda_G$ and $\Lambda_{\cb}(G)$ for the weak amenability constant is also found in the literature. For the definition of Herz-Schur multipliers and the Herz-Schur norm we refer to Section~\ref{sec:prelim}, but let us mention here that any (normalized) positive definite function on the group $G$ is a Herz-Schur multiplier (of norm 1). Hence all amenable groups are also weakly amenable (how lucky?) and their weak amenability constant is $1$. If a group is not weakly amenable we write $\Lambda_\WA(G) = \infty$.

If, in the definition of weak amenability, no condition were put on the boundedness of the norms, then any $G$ group would admit such a net of functions approximating $1$ uniformly on compacts: It follows from Lemma~3.2 in \cite{MR0228628} that given any compact subset $K$ of a locally compact group $G$, there is a compactly supported Herz-Schur multiplier $u$ taking the value $1$ on all of $K$. The lemma in fact states something much stronger, namely that one can even arrange for $u$ to be in the linear span of the set of continuous compactly supported positive definite functions. But the Herz-Schur norm of $u$ will in general not stay bounded when the compact set $K$ grows.

Weak amenability of groups has been extensively studied. Papers studying weak amenability include \cite{MR748862}, \cite{MR2132866}, \cite{MR996553}, \cite{MR784292}, \cite{MR1245415}, \cite{MR1418350}, \cite{MR520930}, \cite{UH-preprint}.

The \emph{Haagerup property} is another much studied approximation property (see the book \cite{MR1852148}). It appeared in connection with the study of approximation properties for operator algebras (see e.g. \cite{MR520930} and \cite{MR718798}).
It is known that groups with Haagerup property satisfy the Baum-Connes conjecture \cite{MR1487204}, \cite{MR1821144}.
The definition is as follows.

A locally compact group $G$ has the Haagerup property, if there is a net $(u_\alpha)$ of continuous positive definite functions on $G$ vanishing at infinity such that $u_\alpha\to 1$ uniformly on compacts. It is clear that amenability implies the Haagerup property, but the free groups demonstrate that the converse is not true (see \cite{MR520930}). It is however not clear what the relation between weak amenability and the Haagerup property is. When Cowling and Haagerup proved that the simple Lie groups $\Sp(1,n)$ are weakly amenable \cite{MR996553}, it became clear that weak amenability does not imply the Haagerup property, because these groups also have Property (T) when $n\geq 2$ (see \cite{MR0245725},\cite{MR0399361},\cite{MR2415834}), and Property (T) is a strong negation of the Haagerup property. However, since the weak amenability constant of $\Sp(1,n)$ is $2n -1$, it does not reveal if having $\Lambda_\WA(G) = 1$ implies having the Haagerup property.

In the light of the approximation properties described so far, and in order to study the relation between weak amenability and the Haagerup property, the \emph{weak Haagerup property} was introduced (for discrete groups) in \cite{MR3146826}. The class of groups with the weak Haagerup property encompasses in a natural way all the weakly amenable groups and groups with the Haagerup property. The definition goes as follows (see also Definition~\ref{defi:WH}).

A locally compact group $G$ has the \emph{weak Haagerup property}, if there is a net $(u_\alpha)$ of Herz-Schur multipliers on $G$ vanishing at infinity and uniformly bounded in Herz-Schur norm such that $u_\alpha\to 1$ uniformly on compacts. The least uniform bound on the norms of such nets (if such a bound exists at all) is the \emph{weak Haagerup constant} of $G$, denoted $\Lambda_\WH(G)$.

In the same way that one deduces that amenable groups are weakly amenable, one sees that groups with the Haagerup property also have the weak Haagerup property. Also, it is trivial that $1\leq \Lambda_\WH(G) \leq \Lambda_\WA(G)$ for every locally compact group $G$, and in particular all weakly amenable groups have the weak Haagerup property.

It is not immediately clear if the potentially larger class of groups with the weak Haagerup property actually contains groups which are not weakly amenable and at the same time without the Haagerup property. In Corollary~\ref{cor:class} we will demonstrate that this is the case.

There are many examples of groups $G$ where $\Lambda_\WH(G) = \Lambda_\WH(G)$, e.g. all amenable groups and more generally all groups $G$ with $\Lambda_\WA(G) = 1$. There are also examples where the two constants differ. In fact, the wreath product group $H = \Z/2 \wr \F_2$ of the cyclic group of order two with the non-abelian free group of rank two is such an example. The group $H = \Z/2 \wr \F_2$ is defined as the semidirect product of $\bigoplus_{\F_2} \Z/2$ by $\F_2$ where $\F_2$ acts on $\bigoplus_{\F_2} \Z/2$ by the shift action. It is known that $H$ has the Haagerup property (see \cite{MR2393636}), and hence $\Lambda_\WH(H) = 1$. But in \cite[Corollary 2.12]{MR2680430} it was shown that $\Lambda_\WA(H) \neq 1$. It was later shown in \cite[Corollary 4]{MR2914879} that in fact $\Lambda_\WA(H) = \infty$.

There is another approximation property of locally compact groups that we would like to briefly mention. It is called the \emph{Approximation Property} or simply AP and was introduced in \cite{MR1220905} (see the end of Section~\ref{sec:prelim} for the definition). It is known that all weakly amenable groups have AP, and there are non-weakly amenable groups with the AP as well (see \cite{MR1220905}).

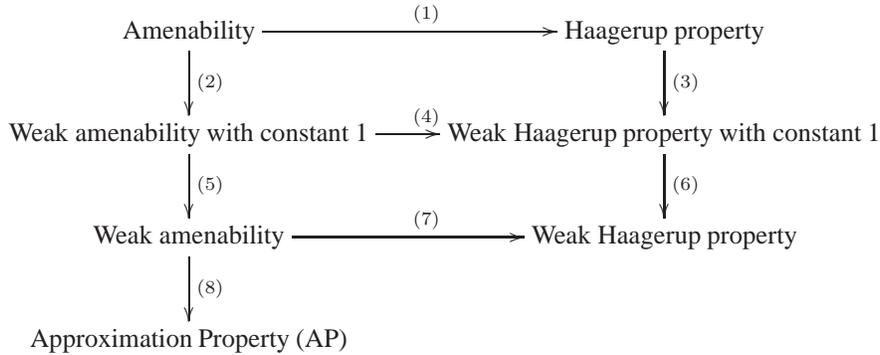
\begin{figure}[th]
$$
\xymatrix{
\text{Amenability} \ar[r]^{(1)} \ar[d]^{(2)}
& \text{Haagerup property} \ar[d]^{(3)} \\
\text{Weak amenability with constant 1} \ar[r]^{(4)} \ar[d]^{(5)}
& \text{Weak Haagerup property with constant 1}  \ar[d]^{(6)} \\
\text{Weak amenability} \ar[r]^{(7)} \ar[d]^{(8)}
& \text{Weak Haagerup property} \\
\text{Approximation Property (AP)}
}
$$
	
	\caption{Approximation properties}
	\label{fig:aps}
\end{figure}

Figure~\ref{fig:aps} displays the relations between the approximation properties mentioned so far. At the moment, all implications are known to be strict except for (3) and (6). In a forthcoming paper \cite{HK-WH-examples} by Haagerup and the author, implication (6) will be shown to be strict as well.

The study of approximation properties of groups has important applications in the theory of operator algebras due to the fact that the approximation properties have operator algebraic counterparts. The standard examples are nuclearity of \Cs-algebras and semidiscreteness of von Neumann algebras which correspond to amenability of groups in the sense that a discrete group is amenable if and only if its reduced group \Cs-algebra is nuclear if and only if its group von Neumann algebra is semidiscrete (see \cite[Theorem~2.6.8]{MR2391387}). Also weak amenability and the Haagerup property have operator algebra analogues (see \cite[Chapter~12]{MR2391387}). In the second part of the present paper we introduce a von Neumann algebraic analogue of the weak Haagerup property and the weak Haagerup constant (see Definition~\ref{defi:WH-VN}).

\section{Main results}

The main results of this paper concern hereditary properties of the weak Haagerup property for locally compact groups and von Neumann algebras. As applications we are able to provide many examples of groups and von Neumann algebras with the weak Haagerup property. We additionally provide some reformulations of the weak Haagerup property (see Proposition~\ref{prop:equi-def} and Proposition~\ref{prop:equivalent-WH1}).

See Definition~\ref{defi:WH} for the definition of the weak Haagerup property for locally compact groups. Concerning the weak Haagerup property for locally compact groups we prove the following collection of hereditary results in Section~\ref{sec:hereditary-1}.

\begin{thmA}
Let $G$ be a locally compact group.
\begin{enumerate}
	\item If $H$ is a closed subgroup of $G$, and $G$ has the weak Haagerup property, then $H$ has the weak Haagerup property. More precisely, $$\Lambda_\WH(H) \leq \Lambda_\WH(G).$$
	\item If $K$ is a compact normal subgroup of $G$, then $G$ has the weak Haagerup property if and only if $G/K$ has the weak Haagerup property. More precisely, $$\Lambda_\WH(G) = \Lambda_\WH(G/K).$$
	\item The weak Haagerup property is preserved under finite direct products. More precisely, if $G'$ is a locally compact group, then $$\Lambda_\WH(G\times G') \leq \Lambda_\WH(G)\Lambda_\WH(G').$$
	\item If $(G_i)_{i\in I}$ is a directed set of open subgroups of $G$, then $$\Lambda_\WH(\bigcup_i G_i) = \lim_i \Lambda_\WH(G_i).$$
	\item If
	$
	1 \longrightarrow N \hooklongrightarrow G \longrightarrow G/N \longrightarrow 1
	$
	is a short exact sequence of locally compact groups, where $G$ is either second countable or discrete, and if $G/N$ is amenable, then $G$ has the weak Haagerup property if and only if $N$ has the weak Haagerup property. More precisely, $$\Lambda_\WH(G) = \Lambda_\WH(N).$$
	\item If $\Gamma$ is a lattice in $G$ and if $G$ is second countable, then $G$ has the weak Haagerup property if and only if $\Gamma$ has the weak Haagerup property. More precisely,
	$$
	\Lambda_\WH(\Gamma) = \Lambda_\WH(G).
	$$
\end{enumerate}
\end{thmA}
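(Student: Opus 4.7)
All six parts proceed by transferring a net $(u_\alpha)$ of Herz-Schur multipliers --- uniformly bounded in Herz-Schur norm, vanishing at infinity, converging uniformly to $1$ on compacts --- from one group to another, while controlling the norm bound. I take the parts in order of increasing difficulty.

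For (1), restrict each $u_\alpha$ to $H$: restriction is Herz-Schur contractive, $C_0(G)|_H\subseteq C_0(H)$ since $H$ is closed, and $H$-compacts are $G$-compacts. For (2), the Haar averaging $E_K u(g)=\int_K u(kg)\,dk$ (which is right-$K$-invariant as well, since $K$ is normal) is a Herz-Schur contractive idempotent onto the $K$-bi-invariant multipliers on $G$, and these are in isometric correspondence with Herz-Schur$(G/K)$ via the quotient map, matching WH-nets on $G$ and $G/K$. Part (3) uses the standard estimate $\|u\otimes v\|\leq\|u\|\|v\|$ applied to a diagonal sub-net of $(u_\alpha\otimes v_\beta)$. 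For (4), Haagerup's classical fact that extension by zero from an open subgroup is Herz-Schur isometric lets each $G_i$-net be pushed to $\bigcup_jG_j$; a standard diagonal argument on the directed index set yields $\Lambda_\WH(\bigcup_iG_i)\leq\lim_i\Lambda_\WH(G_i)$, and the reverse inequality is from (1).

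Part (5) is the classical ``extension by amenable quotient'' argument. The inequality $\Lambda_\WH(N)\leq\Lambda_\WH(G)$ is from (1). For the converse, take a Gilbert factorization $u_\alpha(y^{-1}x)=\langle P_\alpha(x),Q_\alpha(y)\rangle$ of the WH-net on $N$; using second countability (or discreteness), pick a Borel cross-section $s:G/N\to G$ with $s(eN)=e$ and lift via the decomposition $g=s(\pi(g))n(g)$. Multiplying the lift pointwise by $\phi_\beta\circ\pi$, where $(\phi_\beta)$ is a net of compactly supported normalized positive-definite functions on $G/N$ provided by amenability, produces a net on $G$ with Herz-Schur norm $\leq C$ (since $\phi_\beta\circ\pi$ has Herz-Schur norm $1$), vanishing at infinity (since $\phi_\beta\circ\pi$ is compactly supported modulo $N$ and $u_\alpha$ is $C_0$ on $N$), and converging to $1$ uniformly on compacts after diagonalization in $(\alpha,\beta)$. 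The delicate point --- and the main obstacle in this part --- is that multiplication by $\phi_\beta\circ\pi$ must cancel the cross-section dependence of the naive lift in order for the product to be a genuine Herz-Schur multiplier of $G$ with the claimed norm bound.

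Part (6): the inequality $\Lambda_\WH(\Gamma)\leq\Lambda_\WH(G)$ is an immediate application of (1), since a lattice is a closed (indeed discrete) subgroup. For the reverse, the plan is to induce Herz-Schur multipliers from $\Gamma$ to $G$: use second countability to pick a Borel fundamental domain $F\subseteq G$ for $\Gamma$ of finite Haar measure, together with the associated Borel cocycle $\alpha\colon G\times F\to\Gamma$ (so that $gx\,\alpha(g,x)^{-1}\in F$). Given $u\in\mathrm{Herz\text{-}Schur}(\Gamma)$ with Gilbert factorization in a Hilbert space $\mc{H}$, lift the factorization through the cocycle to $\tilde P,\tilde Q\colon G\to L^2(F,\mc{H})$, producing an induced function $\tilde u\in\mathrm{Herz\text{-}Schur}(G)$ with $\|\tilde u\|\leq\|u\|$; the $C_0$-condition and the uniform convergence to $1$ on compacts of $G$ both pass to $\tilde u_\alpha$ through the finite-measure fundamental domain. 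The main obstacle is the norm bound for $\tilde u$, which requires that the induction be isometric: finiteness of the Haar measure of $F$ is exactly what makes this possible, and is the reason the hypothesis of a lattice (as opposed to an arbitrary closed subgroup) is needed.
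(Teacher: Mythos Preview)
Parts (1)--(4) and (6) of your plan are correct and match the paper's approach almost exactly: restriction, $K$-bi-invariant averaging combined with the identification $B_2(G/K)\cong B_2(G)^K$, tensor products of Gilbert factorizations, extension by zero from open subgroups, and cocycle induction through a finite-measure fundamental domain into $L^2(F,\mc H)$. (A minor point in (6): the induction map is contractive, not isometric, but contractivity is all you need.)

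Part (5), however, has a genuine gap, and you flag it yourself without resolving it. The ``naive lift'' $g\mapsto u_\alpha(n(g))$ via the cross-section decomposition $g=s(\pi(g))n(g)$ is \emph{not} a Herz-Schur multiplier on $G$ in general: the quantity $n(h^{-1}g)=s(\pi(h)^{-1}\pi(g))^{-1}h^{-1}g$ does not factor through $n(g)$ and $n(h)$ separately, so no Gilbert factorization of the lift exists. Multiplying by $\phi_\beta\circ\pi$ (which does have Herz-Schur norm $1$) cannot repair this---the product of a non-multiplier with a multiplier need not be a multiplier, and certainly the norm estimate ``$\leq C\cdot 1$'' you claim has no basis.

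The paper's fix is to use the cocycle idea you already deploy correctly in (6), but replacing the finite fundamental domain by a \emph{F{\o}lner set}. Amenability of $G/N$ gives, for each compact $L\subseteq G$ and $\epsilon>0$, a compact $F\subseteq G/N$ with $\mu(gF\triangle F)/\mu(F)<\epsilon$ for $g\in L$. With the same cocycle $\alpha(g,x)=\sigma(gx)^{-1}g\sigma(x)$ as in the lattice case, one sets
\[
v(g)=\frac{1}{\mu(F)}\int_{G/N}1_{F\cap g^{-1}F}(x)\,u\bigl(\alpha(g,x)\bigr)\,d\mu(x),
\]
and the Gilbert factorization $\tilde P(g)(x)=\mu(F)^{-1/2}1_{g^{-1}F}(x)P(\alpha(g,x))$ in $L^2(G/N,\mc H)$ gives $\|v\|_{B_2}\leq\|u\|_{B_2}$ exactly as in your (6). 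The F{\o}lner condition is what makes $v\approx 1$ on $L$; the indicator $1_{F\cap g^{-1}F}$ plays the role that $\phi_\beta$ was meant to play in your sketch, but it lives \emph{inside} the integral defining $v$, not as a post-hoc multiplicative correction. So (5) and (6) are really the same construction---you just need to run your own (6) argument with F{\o}lner sets in place of the full quotient.
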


As mentioned, examples of groups with the weak Haagerup property trivially include all weakly amenable groups and groups with the Haagerup property. Apart from all these examples, we provide an additional example in Corollary~\ref{cor:class} to show that the class of weakly Haagerup groups is strictly larger than the class of weakly amenable groups and groups with the Haagerup property combined. Examples of groups without the weak Haagerup property will be one of the subjects of another paper \cite{HK-WH-examples} by Haagerup and the author.

See Definition~\ref{defi:WH-VN} and Remark~\ref{rem:independent-trace} for the definition of the weak Haagerup property for finite von Neumann algebras. Concerning the weak Haagerup property for finite von Neumann algebras we will prove the following theorems.

\begin{thmA}\label{thm:vna}
Let $\Gamma$ be a discrete group. The following conditions are equivalent.
\begin{enumerate}
	\item The group $\Gamma$ has the weak Haagerup property.
	\item The group von Neumann algebra $L(\Gamma)$ has the weak Haagerup property.
\end{enumerate}
More precisely, $\Lambda_\WH(\Gamma) = \Lambda_\WH(L(\Gamma))$.
\end{thmA}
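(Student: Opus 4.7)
The plan is to exploit the classical correspondence between Herz–Schur multipliers on $\Gamma$ and normal completely bounded $L(\Gamma)$-bimodule maps on $L(\Gamma)$. To a multiplier $u \colon \Gamma \to \C$ one associates the map $M_u$ determined by $M_u(\lambda_g) = u(g)\lambda_g$, and conversely to a normal cb map $T \colon L(\Gamma) \to L(\Gamma)$ one associates the coefficient function $u_T(g) = \tau(T(\lambda_g)\lambda_g^*)$. The known identities $\|M_u\|_{\cb} = \|u\|_{B_2}$ (Bo\.zejko–Fendler, De Canni\`ere–Haagerup) and $\|u_T\|_{B_2} \leq \|T\|_{\cb}$ (via an averaging argument) will give the sharp comparison of constants. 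The heart of the proof is to match the Herz–Schur $c_0$-condition on $u$ with the $L^2$-compactness condition appearing in the von Neumann algebraic weak Haagerup property for $T$, and to match the relevant approximation-of-identity conditions.

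For the direction $(1) \Rightarrow (2)$, I would take a net $(u_\alpha)$ of Herz–Schur multipliers on $\Gamma$ with $u_\alpha \in c_0(\Gamma)$, $\|u_\alpha\|_{B_2} \leq \Lambda_\WH(\Gamma) + \epsilon$, and $u_\alpha \to 1$ pointwise, and set $T_\alpha = M_{u_\alpha}$. Each $T_\alpha$ is normal with $\|T_\alpha\|_{\cb} = \|u_\alpha\|_{B_2}$. Because $T_\alpha$ acts on $L^2(L(\Gamma),\tau) = \ell^2(\Gamma)$ as the diagonal multiplication operator with symbol $u_\alpha \in c_0(\Gamma)$, its $L^2$-extension is compact, which supplies the required compactness ingredient of the definition. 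Pointwise convergence $u_\alpha \to 1$ on $\Gamma$ upgrades to $\|\cdot\|_2$-convergence $T_\alpha(x) \to x$ for every $x \in L(\Gamma)$ by first checking on the total set $\{\lambda_g\}$ and then using the uniform cb bound together with density. This yields $\Lambda_\WH(L(\Gamma)) \leq \Lambda_\WH(\Gamma)$.

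For the direction $(2) \Rightarrow (1)$, I would take a net $(T_\alpha)$ witnessing the weak Haagerup property of $L(\Gamma)$ and define $u_\alpha(g) = \tau(T_\alpha(\lambda_g)\lambda_g^*)$. A standard averaging argument (integrate $\lambda_h T_\alpha(\lambda_h^{-1} \cdot \lambda_h)\lambda_h^{-1}$ against the Haar measure on a weak-$*$ compact neighbourhood of the unit in the character group, or equivalently average over the two-sided action of $L(\Gamma)$ on itself) shows that $u_\alpha$ is a Herz–Schur multiplier with $\|u_\alpha\|_{B_2} \leq \|T_\alpha\|_{\cb}$. The key step is to verify $u_\alpha \in c_0(\Gamma)$: writing $u_\alpha(g) = \langle \widehat{T_\alpha} \lambda_g, \lambda_g\rangle$ where $\widehat{T_\alpha}$ is the compact $L^2$-extension of $T_\alpha$ and $\{\lambda_g\}_{g\in\Gamma}$ is an orthonormal basis of $\ell^2(\Gamma)$, we conclude $u_\alpha(g) \to 0$ as $g\to\infty$ because the diagonal coefficients of a compact operator in any orthonormal basis tend to zero. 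Pointwise convergence $u_\alpha(g) \to 1$ follows from $T_\alpha(\lambda_g) \to \lambda_g$ in $\|\cdot\|_2$, and uniformity on compacts is automatic since compact subsets of the discrete group $\Gamma$ are finite. This yields $\Lambda_\WH(\Gamma) \leq \Lambda_\WH(L(\Gamma))$.

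The main obstacle is the $(2) \Rightarrow (1)$ direction, specifically securing a clean averaging that produces a genuine Herz–Schur multiplier from an arbitrary normal cb map on $L(\Gamma)$ without losing any cb-norm, and identifying the right notion of $L^2$-compactness in the von Neumann algebraic definition so that it exactly corresponds to the $c_0$-condition on $\Gamma$. With these in place, combining the two inequalities gives $\Lambda_\WH(\Gamma) = \Lambda_\WH(L(\Gamma))$, and the theorem follows.
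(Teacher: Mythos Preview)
Your overall strategy matches the paper's proof, but there are two concrete issues worth flagging.

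First, in the direction $(1)\Rightarrow(2)$ you overlook condition~(2) in Definition~\ref{defi:WH-VN}, namely $\la T_\alpha x,y\ra_\tau = \la x,T_\alpha y\ra_\tau$. For $T_\alpha = M_{u_\alpha}$ this amounts to the diagonal operator on $\ell^2(\Gamma)$ with symbol $u_\alpha$ being self-adjoint, i.e.\ $u_\alpha$ being real-valued. An arbitrary Herz--Schur multiplier need not be real, so as written your $T_\alpha$ may fail the symmetry requirement. The paper fixes this by replacing $u_\alpha$ with $\tfrac12(u_\alpha+\bar u_\alpha)$ at the outset; this preserves the $B_2$-bound, the $c_0$-condition, and the pointwise convergence to $1$, and makes $M_{u_\alpha}$ symmetric on $L^2$.

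Second, in $(2)\Rightarrow(1)$ your description of the ``standard averaging argument'' is garbled: integrating against ``the Haar measure on a weak-$*$ compact neighbourhood of the unit in the character group'' has no clear meaning for a general (non-abelian, possibly uncountable) discrete group $\Gamma$, and the alternative phrase ``average over the two-sided action of $L(\Gamma)$ on itself'' does not describe an actual construction. The paper proves the inequality $\|u_\alpha\|_{B_2}\leq\|T_\alpha\|_\cb$ concretely via Fell's absorption principle: using the comultiplication $\sigma(\lambda(g))=\lambda(g)\otimes\lambda(g)$ and the isometry $V\delta_g=\delta_g\otimes\delta_g$, one writes $M_{u_\alpha}=V^*(\id\otimes T_\alpha)\circ\sigma(\,\cdot\,)V$, from which the cb-norm bound is immediate. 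Your $c_0$-argument (diagonal entries of a compact operator in an orthonormal basis vanish at infinity) is fine and essentially equivalent to the paper's use of $\|T_\alpha\lambda(g)\|_2\to 0$ together with Cauchy--Schwarz.
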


\begin{thmA}\label{thm:hereditary2}
Let $M, M_1, M_2,\ldots$ be finite von Neumann algebras which admit faithful normal traces.

\begin{enumerate}
	\item If $M_2\subseteq M_1$ is a von Neumann subalgebra, then $\Lambda_\WH(M_2)\leq\Lambda_\WH(M_1)$.
	\item If $p\in M$ is a non-zero projection, then $\Lambda_\WH(pMp) \leq\Lambda_\WH(M)$.
	\item Suppose that $1\in M_1\subseteq M_2\subseteq \cdots$ are von Neumann subalgebras of $M$ generating all of $M$, and there is an increasing sequence of non-zero projections $p_n\in M_n$ with strong limit $1$. Then $\Lambda_\WH(M) = \sup_n \Lambda_\WH(p_nM_np_n)$.
	\item 
	$$
	\Lambda_\WH\left(\bigoplus_n M_n \right) = \sup_n \Lambda_\WH(M_n).
	$$
	\item
	$$
	\Lambda_\WH(M_1\tensor M_2) \leq \Lambda_\WH(M_1)\Lambda_\WH(M_2).
	$$
\end{enumerate}
\end{thmA}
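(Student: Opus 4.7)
The plan is to handle each item by the standard constructions of approximating maps (conditional expectations, compressions, block-diagonal assemblies, and tensor products) while carefully tracking both the cb-norm bound and the $L^2$-compactness condition which, by analogy with the group case, distinguishes weak Haagerup from weak amenability. For item (1), using the trace-preserving normal conditional expectation $E \colon M_1 \to M_2$ available in the finite setting (applied to the restriction of a faithful normal trace on $M_1$), I would set $S_\alpha = E \circ T_\alpha \circ \iota$ for a witnessing net $(T_\alpha)$ on $M_1$, where $\iota$ is the inclusion; the cb-norm satisfies $\|S_\alpha\|_{\cb} \leq \|T_\alpha\|_{\cb}$, pointwise $L^2$-approximation of the identity on $M_2$ is inherited, and $L^2$-compactness is preserved under composition with the $L^2$-contractive projection induced by $E$. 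For item (2), the maps $x \mapsto p T_\alpha(x) p$ on the corner $pMp$ (equipped with the rescaled trace $\tau(\cdot)/\tau(p)$) do the job, using that $L^2(pMp,\tau_p)$ embeds isometrically into $L^2(M,\tau)$.

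Item (3) is the main obstacle. The inequality $\Lambda_\WH(M) \geq \sup_n \Lambda_\WH(p_n M_n p_n)$ is immediate from (1) and (2). For the reverse, assume $C := \sup_n \Lambda_\WH(p_n M_n p_n) < \infty$, fix a finite set $F \subseteq M$ and tolerance $\epsilon > 0$, and use the strong density of $\bigcup_n M_n$ in $M$ together with $p_n \to 1$ strongly to find $n$ and approximants $p_n a p_n \in p_n M_n p_n$ with $\|p_n a p_n - a\|_2 < \epsilon$ for each $a \in F$. Choose $T$ on $p_n M_n p_n$ with $\|T\|_{\cb} < C + \epsilon$, compact $L^2$-realization, and which nearly fixes these approximants, and extend to $M$ as $\tilde T = T \circ E$, where $E \colon M \to p_n M_n p_n$ is a trace-preserving conditional expectation onto $M_n$ followed by compression by $p_n$. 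Since $E$ has cb-norm one and is $L^2$-contractive, both the cb-bound and the compactness pass to $\tilde T$; a directed-set argument over triples $(n, F, \epsilon)$ then produces the required net on $M$.

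For item (4), the $\geq$ inequality follows from (1) applied to the central projections, while for $\leq$ I would truncate to finitely many summands using the central projections and assemble approximating nets on each $M_n$ into block-diagonal maps with cb-bound at most the supremum; $L^2$-compactness reduces to a finite direct sum and is thus stable. For item (5), the tensor map $T_\alpha \tensor S_\beta$ on $M_1 \vntensor M_2$ has cb-norm bounded by the product, its $L^2$-realization is the tensor product of compact operators (hence compact), and $L^2$-approximation of $\id \tensor \id$ extends from simple tensors to all of $L^2(M_1 \vntensor M_2, \tau_1 \tensor \tau_2) \cong L^2(M_1) \tensor L^2(M_2)$ by the uniform cb-bound.
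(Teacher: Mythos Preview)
Your proposal is correct and follows essentially the same route as the paper: conditional expectations for (1), corner compressions for (2), the composite $R\circ(p_nE_n(\cdot)p_n)$ for (3), block-diagonal assembly for (4) (the paper reduces the infinite direct sum explicitly to (3), but your direct truncation argument is the same thing), and the tensor map $T_\alpha\vntensor S_\beta$ for (5). One point you should add: Definition~\ref{defi:WH-VN} also requires the symmetry condition $\la T_\alpha x,y\ra_\tau=\la x,T_\alpha y\ra_\tau$, which you never mention tracking; each of your constructions does preserve it (using that $E$ is trace-preserving and a bimodule map, that $x\mapsto pxp$ is $\tau$-symmetric, and that the tensor of two $\tau_i$-symmetric maps is $\tau_1\vntensor\tau_2$-symmetric on elementary tensors and hence everywhere by density), but this verification is part of the proof and should be stated.
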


As an application of the theorems above, in Section~\ref{sec:application} we give an example of two von Neumann algebras, in fact $\II_1$ factors, which are distinguished by the weak Haagerup property, i.e. the two von Neumann algebras do not have the same weak Haagerup constant. None of the other approximation properties mentioned in the introduction (see Figure~\ref{fig:aps}), or more precisely the corresponding operator algebraic approximation properties, can distinguish the two factors (see Remark~\ref{rem:other-APs}).

As another application of Theorem~\ref{thm:hereditary2} (or rather Theorem~\ref{thm:hereditary2-2} in Section~\ref{sec:hereditary2}) we are able to prove that the weak Haagerup constant of a von Neumann algebra with a faithful normal trace does not depend on the choice of trace (see Proposition~\ref{prop:independent-trace}).

Although the following result is not proved in this paper, we would like to mention it here, because it gives a complete description of the weak Haagerup property for connected simple Lie groups.

\begin{thm*}[\cite{HK-WH-examples}]
A connected simple Lie group has the weak Haagerup property if and only if it has real rank zero or one.
\end{thm*}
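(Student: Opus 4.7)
The statement is an equivalence which I would split into its two directions. For the ``if'' direction, suppose $G$ has real rank zero or one. In the rank zero case, $G$ is compact, hence amenable, hence $\Lambda_\WH(G)=1$. In the rank one case, $G$ is locally isomorphic to one of $\SO_0(n,1)$, $\SU(n,1)$, $\Sp(n,1)$, or $F_{4(-20)}$, all of which are known to be weakly amenable (by de Canni\`ere--Haagerup for the first two families and Cowling--Haagerup for the latter two); hence $\Lambda_\WH(G)\leq \Lambda_\WA(G)<\infty$. To pass from the linear model groups to an arbitrary connected $G$ in the same local isomorphism class, I would combine Theorem~A(2) (to divide out a compact normal subgroup) with Theorem~A(4) (to handle an ascending union of open subgroups in the cover when necessary).

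For the ``only if'' direction, assume $G$ has real rank at least two. The plan is to reduce to a single rank-two model group. From the root-system/parabolic structure of $\mf{g}$, any connected simple Lie group of real rank $\geq 2$ contains a closed subgroup locally isomorphic to either $\SL(3,\R)$ or $\Sp(2,\R)$, coming from an $\mf{sl}_3$- or $\mf{sp}_4$-subalgebra of $\mf{g}$. By the hereditary result Theorem~A(1), it therefore suffices to prove that neither of these two model groups has the weak Haagerup property.

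To rule out the weak Haagerup property for $\SL(3,\R)$ and $\Sp(2,\R)$, I would route through the Approximation Property of Haagerup--Kraus. First, I would verify the implication WH $\Rightarrow$ AP for any locally compact group: given a WH net $(u_\alpha)$ and an exhausting family $(K_n)$ of compact sets, multiply $u_\alpha$ by compactly supported positive definite cut-offs $\phi_n$ equal to $1$ on $K_n$ (as produced by Lemma~3.2 of \cite{MR0228628}); the products are compactly supported Herz--Schur multipliers, uniformly bounded, tending to $1$ uniformly on compacts, and a weak-$\ast$ compactness argument in the dual of the predual of $M_0A(G)$ then yields the AP. Since $\SL(3,\R)$ and $\Sp(2,\R)$ are known to fail the AP (via the representation-theoretic method of Lafforgue--de la Salle, subsequently extended to all higher-rank connected simple Lie groups), they fail the weak Haagerup property. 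The main obstacle is clearly this last input: the failure of AP for $\SL(3,\R)$ is a deep representation-theoretic result requiring a delicate analysis of completely bounded Fourier multipliers via matrix coefficients of principal and complementary series together with $KAK$-estimates for radial multipliers along the Cartan subgroup; everything else---the rank zero/one cases, the subgroup reduction, and the implication WH$\Rightarrow$AP---is comparatively routine once the correct tools are lined up.
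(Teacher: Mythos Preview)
Your ``if'' direction is essentially correct, though the remark about Theorem~A(4) is off: a connected cover is never an ascending union of proper open subgroups. The cleanest fix is to invoke Theorem~A(5) (the center of a connected simple Lie group is discrete abelian, hence amenable), or simply cite Theorem~\ref{thm:WA-Lie-rank-one}, which already covers the full local isomorphism class.

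The ``only if'' direction has a genuine gap. Your route goes through the implication WH $\Rightarrow$ AP, but the argument you sketch for that implication is broken. The cut-offs $\phi_n\in A(G)$ produced by Eymard's lemma are \emph{not} uniformly bounded in $B_2$-norm; this is precisely the point made in the introduction of the paper (a uniformly $B_2$-bounded family of compactly supported multipliers tending to $1$ would already witness weak amenability, which WH does not imply, cf.\ $\Z/2\wr\F_2$). So the products $\phi_n u_\alpha$ are not uniformly bounded, and no Banach--Alaoglu argument is available. In fact the implication WH $\Rightarrow$ AP is not established anywhere in this paper, and the diagram in Figure~\ref{fig:aps} contains no arrow from WH to AP.

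The paper does not actually prove this theorem; it only records it and points to \cite{HK-WH-examples}, indicating that the proof there uses the results of Haagerup--de~Laat \cite{MR3047470,delaat2}. The point is that those papers do more than conclude ``no AP'': their method yields structural information about $K$-bi-invariant Herz--Schur multipliers on $\SL(3,\R)$ and $\Sp(2,\R)$ (roughly, a limit-at-infinity functional that is continuous for the $B_2$-norm). This information applies equally well to multipliers in $B_2\cap C_0$, not only to those in $A(G)$, and that is what directly obstructs the weak Haagerup property. After the bi-invariance trick (Appendix~\ref{app:average}) one reduces to $K$-bi-invariant multipliers and reads off the obstruction. Your subgroup reduction to a rank-two model group via Theorem~A(1) is the right opening move; what needs to change is replacing the unproved WH $\Rightarrow$ AP step by a direct appeal to the Haagerup--de~Laat multiplier estimates.
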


\section{Preliminaries}\label{sec:prelim}
We always let $G$ denote a locally compact group equipped with left Haar measure. We always include the Hausdorff requirement whenever we discuss topological groups and spaces.

The space of continuous functions on $G$ (with complex values) is denoted $C(G)$. It contains the subspace $C_0(G)$ of continuous functions vanishing at infinity and the subspace $C_c(G)$ of compactly supported continuous functions. When $G$ is a Lie group, $C^\infty(G)$ denotes the space of smooth functions on $G$.

In the following we introduce the Fourier algebra $A(G)$, the group von Neumann algebra $L(G)$, the completely bounded Fourier multipliers $M_0A(G)$, the algebra of Herz-Schur multipliers $B_2(G)$ and its predual $Q(G)$. This is quite a mouthful, so we encourage you to take a deep breath before you read any further. The most important of these spaces in the present context is the space of Herz-Schur multipliers $B_2(G)$ which occurs also in the definition of the weak Haagerup property, Definition~\ref{defi:WH}.

When $\pi$ is a continuous unitary representation of $G$ on some Hilbert space $\mc H$, and when $h,k\in\mc H$, then the continuous function $u$ defined by
\begin{align}\label{eq:matrix-coefficient}
u(x) = \la \pi(x)h,k\ra \quad \text{for all } x\in G
\end{align}
is a \emph{matrix coefficient} of $\pi$. The \emph{Fourier algebra} $A(G)$ is the space of matrix coefficients of the left regular representation $\lambda : G\to L^2(G)$. That is, $u\in A(G)$ if and only if there are $h,k\in L^2(G)$ such that
\begin{align}\label{eq:matrix-coefficient-lambda}
u(x) = \la \lambda(x)h,k\ra, \quad \text{for all } x\in G.
\end{align}
With pointwise operations, $A(G)$ becomes an algebra, and when equipped with the norm
$$
\| u\|_A = \inf \{ \|h\|_2 \|k\|_2 \mid \eqref{eq:matrix-coefficient-lambda} \text{ holds} \}.
$$
$A(G)$ is in fact a Banach algebra.

Given $u\in A(G)$ there are $f,g\in L^2(G)$ such that $u = f * \check g$ and $\|u\| = \|f\|_2\|g\|_2$, where $\check g(x) = g(x^{-1})$ and $*$ denotes convolution. This is often written as
$$
A(G) = L^2(G) * L^2(G).
$$
It is known that $\|u\|_\infty \leq \|u\|_A$ for any $u\in A(G)$, and $A(G) \subseteq C_0(G)$.

The Fourier algebra was introduced and studied in Eymard's excellent paper \cite{MR0228628} to which we refer to details about the Fourier algebra. When $G$ is not compact, the Fourier algebra $A(G)$ contains no unit. But it was shown in \cite{MR0239002} that $A(G)$ has a bounded approximate unit if and only if $G$ is amenable (see also \cite[Theorem~10.4]{MR767264}).

The von Neumann algebra generated by the image of the left regular representation $\lambda : G\to B(L^2(G))$ is the \emph{group von Neumann algebra}, $L(G)$. The Fourier algebra $A(G)$ can be identified isometrically with the (unique) predual of $L(G)$, where the duality is given by
$$
\la u,\lambda(x)\ra = u(x), \quad x\in G, \ u\in A(G).
$$

A function $v:G\to\C$ is called a \emph{Fourier multiplier}, if $vu \in A(G)$ for every $u\in A(G)$. A Fourier multiplier $v$ is continuous and bounded, and it defines bounded multiplication operator $m_v : A(G) \to A(G)$. The dual operator of $m_v$ is a normal (i.e. ultraweakly continuous) bounded operator $M_v : L(G)\to L(G)$ such that
$$
M_v \lambda(x) = v(x)\lambda(x).
$$
In \cite[Proposition~1.2]{MR784292} it is shown that Fourier multipliers can actually be characterized as the continuous functions $v:G\to\C$ such that
$$
\lambda(x) \mapsto v(x)\lambda(x)
$$
extends to a normal, bounded operator on the group von Neumann algebra $L(G)$. If $M_v$ is not only bounded but a completely bounded operator on $L(G)$, we say that $v$ is a \emph{completely bounded Fourier multiplier}. We denote the space of completely bounded Fourier multipliers by $M_0A(G)$. When equipped with the norm $\|v\|_{M_0A} = \|M_v\|_{\cb}$, where $\|\ \|_\cb$ denotes the completely bounded norm, $M_0A(G)$ is a Banach algebra. It is clear that
\begin{align}\label{eq:multiplier-norm}
\| v u \|_A \leq \|v \|_{M_0A} \|u\|_A \quad\text{ for every } v\in M_0A(G),\ u\in A(G).
\end{align}

One of the key notions of this paper is the notion of a Herz-Schur multiplier, which we now recall. Let $X$ be a non-empty set. A function $k:X\times X\to\C$ is called a \emph{Schur multiplier} on $X$ if for every bounded operator $A = [a_{xy}]_{x,y\in X} \in B(\ell^2(X))$ the matrix $[k(x,y)a_{xy}]_{x,y\in X}$ represents a bounded operator on $\ell^2(X)$, denoted $m_k(A)$. If $k$ is a Schur multiplier, it is a consequence of the closed graph theorem that $m_k$ defines a \emph{bounded} operator on $B(\ell^2(X))$. We define the \emph{Schur norm} $\|k\|_S$ to be the operator norm $\|m_k\|$ of $m_k$.

Let $u:G\to\C$ be a continuous function. Then $u$ is as \emph{Herz-Schur multiplier} if and only if the function $\hat u:G\times G\to\C$ defined by
$$
\hat u(x,y) = u(y^{-1}x), \quad x,y\in G,
$$
is a Schur multiplier on $G$. The set of Herz-Schur multipliers on $G$ is denoted $B_2(G)$. It is a Banach space, in fact a unital Banach algebra, when equipped with the \emph{Herz-Schur norm} $\|u\|_{B_2} = \|\hat u\|_S = \| m_{\hat u}\|$.

It is known that $B_2(G) = M_0A(G)$ isometrically (see \cite{MR753889}, \cite{MR1180643}, \cite[Theorem~5.1]{MR1818047}). We include several well-known characterizations of the Herz-Schur multipliers $B_2(G)$ below.

\begin{prop}\label{prop:Gilbert}
Let $G$ be a locally compact group, let $u:G\to\C$ be a function, and let $k \geq 0$ be given. The following are equivalent.
\begin{enumerate}
	\item $u$ is a Herz-Schur multiplier with $\|u\|_{B_2} \leq k$.
	\item $u$ is continuous, and for every $n\in\N$ and $x_1,\ldots,x_n\in G$
	$$
	\| u(x_j^{-1}x_i)_{i,j=1}^n \|_S \leq k.
	$$
	\item $u$ is a completely bounded Fourier multiplier with $\|u\|_{M_0A(G)} \leq k$.
	\item There exist a Hilbert space $\mc H$ and two bounded, continuous maps $P,Q:G\to\mc H$ such that
	$$
	u(y^{-1}x) = \la P(x),Q(y) \ra \qquad\text{for all } x,y\in G
	$$
	and
	$$
	(\sup_{x\in G} \|P(x)\|)(\sup_{y\in G} \|Q(y)\|) \leq k.
	$$
\end{enumerate}
If $G$ is second countable, then the above is equivalent to	
\begin{enumerate}
	\item[(5)] There exist a Hilbert space $\mc H$ and two bounded, Borel maps $P,Q:G\to\mc H$ such that
	$$
	u(y^{-1}x) = \la P(x),Q(y) \ra \qquad\text{for all } x,y\in G
	$$
	and
	$$
	(\sup_{x\in G} \|P(x)\|)(\sup_{y\in G} \|Q(y)\|) \leq k.
	$$
\end{enumerate}
\end{prop}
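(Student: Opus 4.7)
My plan is to establish the chain (1)$\Leftrightarrow$(2)$\Leftrightarrow$(3)$\Leftrightarrow$(4), then handle (5) under second countability. The isometric identification $B_2(G) = M_0A(G)$, which yields (1)$\Leftrightarrow$(3), is the cited result from \cite{MR753889}, \cite{MR1180643}, \cite[Theorem~5.1]{MR1818047}, so I would invoke it directly. For (1)$\Leftrightarrow$(2) I would use the standard fact that the Schur norm on $B(\ell^2(G))$ equals the supremum of Schur norms of finite principal submatrices: the forward direction is compression to $\mr{span}\{\delta_{x_1},\ldots,\delta_{x_n}\}$, and the converse follows from weak-$*$ density of finitely supported matrices in $B(\ell^2(G))$ together with weak-$*$ continuity of Schur multiplication by a fixed kernel. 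Continuity of $u$ is built into the hypothesis in both (1) and (2).

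For (4)$\Rightarrow$(1) the plan is a one-step factorization. Given bounded $P,Q:G\to\mc H$, I would define $V_P,V_Q:\ell^2(G)\to\ell^2(G)\tensor\mc H$ by $V_P\delta_x = \delta_x\tensor P(x)$ and $V_Q\delta_y = \delta_y\tensor Q(y)$; these are bounded with $\|V_P\|\leq\sup\|P\|$ and $\|V_Q\|\leq\sup\|Q\|$. A direct matrix calculation shows $V_Q^*(A\tensor I_{\mc H})V_P = m_{\hat u}(A)$ for every $A\in B(\ell^2(G))$, yielding $\|m_{\hat u}\|\leq(\sup\|P\|)(\sup\|Q\|)\leq k$. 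Exactly the same calculation proves (5)$\Rightarrow$(1), since only boundedness and Borel measurability of $P,Q$ are needed to define $V_P,V_Q$.

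The heart of the argument is (3)$\Rightarrow$(4), which is Gilbert's representation theorem. My plan is to apply the Paulsen--Wittstock factorization of normal completely bounded maps to $M_u:L(G)\to L(G)$, producing a Hilbert space $\mc K$, a normal $*$-representation $\pi:L(G)\to B(\mc K)$, and bounded linear maps $R,T:L^2(G)\to\mc K$ with $\|R\|\|T\|\leq k$ and $M_u(a) = R^*\pi(a)T$ for every $a\in L(G)$. Specializing to $a=\lambda(x)$ and using the shift-covariance $\lambda(x)\lambda(y)=\lambda(xy)$ together with a change of variable $x=y^{-1}z$, one reads off $u(y^{-1}z)=\langle P(z),Q(y)\rangle$ with $P,Q$ built from $R,T$ and $\pi\circ\lambda$, and the norm bound $\sup\|P\|\sup\|Q\|\leq k$ inherited from $\|R\|\|T\|\leq k$.

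The hard part will be arranging continuity of $P,Q$: since $\delta_y$ does not belong to $L^2(G)$ when $G$ is non-discrete, the naive formula must be regularized by integrating $R\delta_y$ and $T\delta_y$ against a compactly supported approximate identity on $G$, after which continuity of the smoothed $P,Q$ follows from SOT continuity of $\pi\circ\lambda$ together with normality of $\pi$. In the second countable case this step is considerably easier since one may settle for Borel $P,Q$ (produced by a measurable selection using separability of $\mc K$), and (4)$\Rightarrow$(5) is trivial, so the equivalence with (5) follows for free from the chain above.
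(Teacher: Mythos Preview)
The paper does not itself prove the equivalence of (1)--(4); it presents these as known and cites \cite{MR753889}, \cite{MR1180643}, \cite{MR1818047}. The only implication the paper actually proves is (5)$\Rightarrow$(4), via Lemma~\ref{lem:HS-continuity} in the appendix. So your plan for (1)--(4) goes beyond the paper's treatment, and your sketch for those parts is reasonable in outline.

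There is, however, a genuine gap in your handling of (5). Your factorization $m_{\hat u}(A)=V_Q^*(A\otimes I_{\mc H})V_P$ shows that $\hat u$ is a Schur multiplier on the index set $G$ with norm at most $k$, but it does \emph{not} show that $u$ is continuous, and continuity is part of the definition of a Herz--Schur multiplier in this paper. From (5) alone, setting $y=e$ gives $u(x)=\la P(x),Q(e)\ra$, which is merely Borel. So (5)$\Rightarrow$(1) is not ``exactly the same calculation'' as (4)$\Rightarrow$(1); the passage from Borel data to a continuous $u$ is precisely the content that is missing.

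The paper's fix is exactly the regularization idea you describe for (3)$\Rightarrow$(4), but deployed instead to upgrade Borel $P,Q$ to continuous ones, thereby proving (5)$\Rightarrow$(4) directly. Given Borel $P,Q:G\to\mc H$ with $\mc H$ separable, fix $h\in C_c(G)$ with $\|h\|_2=1$ and set $\hat P(x),\hat Q(x)\in L^2(G,\mc H)$ by $\hat P(x)(z)=h(z)P(zx)$ and $\hat Q(x)(z)=h(z)Q(zx)$. A direct computation gives $\la\hat P(x),\hat Q(y)\ra=\int_G|h(z)|^2\,u(y^{-1}x)\,dz=u(y^{-1}x)$ with $\|\hat P\|_\infty\|\hat Q\|_\infty\leq\|P\|_\infty\|Q\|_\infty$, and continuity of $x\mapsto\hat P(x)$ follows from strong continuity of the right regular representation on $L^2(G)$. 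This yields (5)$\Rightarrow$(4), and continuity of $u$ then comes for free from (4). Your smoothing instinct was right; it just needs to be applied at (5)$\Rightarrow$(4), not only inside (3)$\Rightarrow$(4).
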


A proof taken from the unpublished manuscript \cite{UH-preprint} of the equivalence of (4) and (5) is included in the appendix (see Lemma~\ref{lem:HS-continuity}).

The space $B_2(G)$ of Herz-Schur multipliers has a Banach space predual. More precisely, let $Q(G)$ denote the completion of $L^1(G)$ in the norm
$$
\| f \|_Q = \sup \left\{ \left| \int_G f(x)u(x)\ dx \right| \mid u\in B_2(G),\ \|u\|_{B_2}\leq 1\} \right\}.
$$
In \cite{MR784292} it is proved that the dual Banach space of $Q(G)$ may be identified isometrically with $B_2(G)$, where the duality is given by
$$
\la f , u \ra = \int_G f(x)u(x)\ dx, \quad f\in L^1(G),\ u\in B_2(G).
$$
Thus, $B_2(G)$ may be equipped with the weak$^*$-topology arising from its predual $Q(G)$. This topology will also be denoted the $\sigma(B_2,Q)$-topology.

We note that since $\|u\|_\infty \leq \|u\|_{B_2}$ for any $u\in B_2(G)$, then $\|f\|_Q \leq \|f\|_1$ for every $f\in L^1(G)$. In particular, $C_c(G)$ is dense in $Q(G)$ with respect to the $Q$-norm, because $C_c(G)$ is dense in $L^1(G)$ with respect to the $1$-norm.

The Approximation Property (AP) briefly mentioned in the introduction is defined as follows. A locally compact group $G$ has AP if there is a net $(u_\alpha)$ in $A(G)$ such that $u_\alpha \to 1$ in the $\sigma(B_2,Q)$-topology. It was shown in \cite[Theorem~1.12]{MR1220905} that weakly amenable groups have AP. Only recently (in \cite{MR3047470}, \cite{delaat2}, \cite{MR2838352}) it was proved that there are (m)any groups without AP. Examples of groups without AP include the special linear groups $\SL_n(\R)$ when $n\geq 3$ and their lattices $\SL_n(\Z)$.

\section{The weak Haagerup property for locally compact groups}

The following definition is the main focus of the present paper.

\begin{defi}\label{defi:WH}
Let $G$ be a locally compact group. Then $G$ has the \emph{weak Haagerup property}, if there are a constant $C > 0$ and a net $(u_\alpha)_{\alpha\in A}$ in $B_2(G) \cap C_0(G)$ such that
$$
\|u_\alpha\|_{B_2} \leq C  \quad\text{ for every } \alpha\in A,
$$
$$
u_\alpha\to 1 \text{ uniformly on compacts as } \alpha\to\infty.
$$

The weak Haagerup constant $\Lambda_{\WH}(G)$ is defined as the infimum of those $C$ for which such a net $(u_\alpha)$ exists, and if no such net exists we write $\Lambda_{\WH}(G) = \infty$. It is not hard to see that the infimum is actually a minimum. If a group $G$ has the weak Haagerup property, we will also sometimes say that $G$ \emph{is weakly Haagerup}.
\end{defi}

If, in the above definition, ones replaces the requirement $u_\alpha\in C_0(G)$ with the stronger requirement $u_\alpha\in C_c(G)$, one obtains the definition of weak amenability.

Apart from the norm topology, there are (at least) three interesting topologies one can put on the norm bounded sets in $B_2(G)$ one of which is the locally uniform topology used in Definition~\ref{defi:WH} and the others being the $\sigma(B_2,Q)$-topology and the point-norm topology (see Appendix~\ref{app:topologies}). Proposition~\ref{prop:weak-conv} and \ref{prop:equi-def} below show that any of these three topologies could have been used in Definition~\ref{defi:WH}. More precisely, we have the following characterizations of the weak Haagerup property.

\begin{prop}\label{prop:weak-conv}
Let $G$ be a locally compact group. Then $\Lambda_\WH(G) \leq C$ if and only if there is a net $(u_\alpha)$ in $B_2(G) \cap C_0(G)$ such that
$$
\|u_\alpha\|_{B_2} \leq C  \quad\text{ for every } \alpha,
$$
$$
u_\alpha\to 1 \text{ in the } \sigma(B_2,Q)\text{-topology}.
$$
\end{prop}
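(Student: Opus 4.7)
The proof splits into two directions.

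The \emph{forward direction} ($\Lambda_\WH(G)\le C$ implies the existence of a $\sigma(B_2,Q)$-convergent net) is direct. Given a net $(u_\alpha)$ as in Definition~\ref{defi:WH}, I would argue it already converges to $1$ in the $\sigma(B_2,Q)$-topology. Since $\|u_\alpha\|_{B_2}\le C$ uniformly, and since $C_c(G)$ is $Q$-dense in $Q(G)$ (as noted following the definition of $Q(G)$), it suffices to test convergence against $f\in C_c(G)$. For such $f$ the estimate
$$
\left|\int_G f(x)\bigl(u_\alpha(x)-1\bigr)\,dx\right|\le \|f\|_1\sup_{x\in\supp f}|u_\alpha(x)-1|
$$
tends to zero because $u_\alpha\to 1$ uniformly on the compact set $\supp f$.

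The \emph{reverse direction} is the substantive one. Given $(u_\alpha)$ with $\|u_\alpha\|_{B_2}\le C$ and $u_\alpha\to 1$ in $\sigma(B_2,Q)$, I would produce a (possibly different) net in the convex set
$$
K_C := \{v\in B_2(G)\cap C_0(G) : \|v\|_{B_2}\le C\}
$$
converging to $1$ uniformly on compact subsets of $G$. My plan is to pass to convex combinations and invoke Hahn--Banach separation. By hypothesis $1\in\overline{K_C}^{\,\sigma(B_2,Q)}$; I want $1\in\overline{K_C}^{\,\tau}$, where $\tau$ denotes the topology of uniform convergence on compacts on $C(G)$.

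Suppose for contradiction that $1\notin\overline{K_C}^{\,\tau}$. Hahn--Banach in the locally convex space $(C(G),\tau)$ would provide a compactly supported Radon measure $\mu$ on $G$ and a constant $c\in\R$ with $\Re\mu(1)>c\ge\Re\mu(v)$ for every $v\in K_C$. The obstacle is that $\mu$ need not lie in $Q(G)$, so the hypothesis is not directly applicable to $\mu$. To bridge this gap I would mollify: choose an approximate identity $(\chi_\epsilon)$ in $C_c(G)$ with $\chi_\epsilon\ge 0$, $\int\chi_\epsilon=1$, and supports shrinking to $\{e\}$, and set $\mu_\epsilon:=\mu*\chi_\epsilon$, which lies in $L^1(G)\cap C_c(G)\subseteq Q(G)$. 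For each $\epsilon$ the $\sigma(B_2,Q)$-hypothesis gives $\mu_\epsilon(u_\alpha)\to \mu_\epsilon(1)=\mu(1)$, and a short Fubini computation yields
$$
\mu_\epsilon(u_\alpha)-\mu(u_\alpha)=\int\!\!\int \bigl(u_\alpha(yz)-u_\alpha(y)\bigr)\chi_\epsilon(z)\,dz\,d\mu(y).
$$

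The main obstacle is to show this double integral is small uniformly in $\alpha$ as $\epsilon\to 0$, since this is precisely what is needed to combine the two estimates into $\mu(u_\alpha)\to\mu(1)$ and reach a contradiction with strict separation. It reduces to equicontinuity of the family $\{u_\alpha\}\subseteq K_C$ on the compact set $\supp\mu$; because $\|u_\alpha\|_{B_2}\le C$, this follows from the equicontinuity of the $B_2$-norm ball on compact subsets of $G$, a property of Herz--Schur multipliers extractable from the Gilbert factorization in Proposition~\ref{prop:Gilbert}. Once this is granted, the contradiction yields $1\in\overline{K_C}^\tau$, producing the desired locally uniformly convergent net and showing $\Lambda_\WH(G)\le C$.
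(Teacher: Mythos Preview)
Your forward direction is correct and is precisely the content of Lemma~\ref{lem:A1}~(2).

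Your reverse direction has a genuine gap. The claim that the $B_2$-norm ball is equicontinuous on compact subsets of $G$ is false, and it cannot be extracted from Proposition~\ref{prop:Gilbert}: the Gilbert factorization $u(y^{-1}x)=\langle P(x),Q(y)\rangle$ gives no uniform control on the modulus of continuity of $P$ or $Q$ as $u$ varies over the ball. For a concrete counterexample take $G=\R$ and $u_n(x)=e^{inx}$; these are positive definite, hence lie in the unit ball of $B_2(\R)$, but are manifestly not equicontinuous. Without equicontinuity there is no reason for $\mu_\epsilon(u_\alpha)-\mu(u_\alpha)$ to be small uniformly in $\alpha$, and the Hahn--Banach contradiction does not close. (Incidentally, the same example shows that point masses do not belong to $Q(\R)$: the characters $e^{inx}$ tend to $0$ in $\sigma(B_2,Q)$ by Riemann--Lebesgue and density of $L^1$ in $Q$, yet pair with $\delta_0$ to the constant $1$. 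So your mollification step really is necessary, and its failure is fatal.)

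The paper bypasses the equicontinuity issue entirely by convolving the \emph{multipliers} rather than the test functional: set $v_\alpha=h*u_\alpha$ for a fixed nonnegative $h\in C_c(G)$ with $\int h=1$. Lemma~\ref{lem:B1} keeps $v_\alpha$ inside $B_2(G)\cap C_0(G)$ with norm at most $C$, and Lemma~\ref{lem:B2}~(2) upgrades $\sigma(B_2,Q)$-convergence to locally uniform convergence via the identity $(h*u_\alpha)(x)=\langle h_x,\check u_\alpha\rangle$, where $h_x(y)=h(xy)$. The point is that $x\mapsto h_x$ is continuous from $G$ into $Q(G)$, so $\{h_x:x\in L\}$ is \emph{compact} in $Q(G)$ for compact $L$, and a bounded $\sigma(B_2,Q)$-convergent net converges uniformly on compact subsets of the predual. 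In effect the convolution with the fixed $h$ manufactures the equicontinuity you were trying to assume.
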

\begin{proof}
Suppose first $\Lambda_\WH(G) \leq C$. Then by Lemma~\ref{lem:A1}~(2), the conditions in our proposition are satisfied.

Conversely, suppose we are given a net $(u_\alpha)$ in $B_2(G) \cap C_0(G)$ such that
$$
\|u_\alpha\|_{B_2} \leq C  \quad\text{ for every } \alpha,
$$
$$
u_\alpha\to 1 \text{ in the } \sigma(B_2,Q)\text{-topology}.
$$
Let $v_\alpha = h * u_\alpha$, where $h$ is a continuous, non-negative, compactly supported function on $G$ such that $\int h(x)\ dx = 1$. Then using the convolution trick (see Lemma~\ref{lem:B1}, Lemma~\ref{lem:B2} and Remark~\ref{rem:conv-trick}) we see that the net $(v_\alpha)$ witnesses $\Lambda_\WH(G) \leq C$.
\end{proof}

The following Proposition (and its proof) is inspired by \cite[Proposition~1.1]{MR996553}.
\begin{prop}\label{prop:equi-def}
Let $G$ be a locally compact group and suppose $\Lambda_\WH(G) \leq C$. Then there exists a net $(v_\alpha)_{\alpha\in A}$ in $B_2(G) \cap C_0(G)$ such that
$$
\|v_\alpha\|_{B_2} \leq C  \quad\text{ for every } \alpha,
$$
$$
\|v_\alpha u - u\|_A \to 0 \quad\text{ for every } u\in A(G),
$$
$$
v_\alpha\to 1 \text{ uniformly on compacts.}
$$

If $L$ is any compact subset of $G$ and $\epsilon > 0$, then there exists $w\in B_2(G)\cap C_0(G)$ so that
$$
\|w\|_{B_2} \leq C+\epsilon,
$$
$$
w = 1 \quad\text{ for every } x\in L.
$$
Moreover, if $K$ is a compact subgroup of $G$, then the net $(v_\alpha)$ and can be chosen to consist of $K$-bi-invariant functions.
Finally, if $G$ is a Lie group, the net $(v_\alpha)$ can additionally be chosen to consist of smooth functions.
\end{prop}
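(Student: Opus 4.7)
The strategy follows \cite[Proposition~1.1]{MR996553}: start with a net $(u_\alpha)$ in $B_2(G)\cap C_0(G)$ witnessing $\Lambda_\WH(G)\leq C$, so $\|u_\alpha\|_{B_2}\leq C$ and $u_\alpha\to 1$ uniformly on compacts. By Proposition~\ref{prop:weak-conv} we may additionally assume $u_\alpha\to 1$ in the $\sigma(B_2,Q)$-topology. The regularity enhancements come from the convolution trick (Lemma~\ref{lem:B2}, Remark~\ref{rem:conv-trick}). For $K$-bi-invariance, replace $u_\alpha$ by the double average
$$
u_\alpha^K(x)\;=\;\int_K\!\int_K u_\alpha(k_1 x k_2)\,dk_1\,dk_2,
$$
which is a convolution on either side by the normalized characteristic function of $K$; this preserves the $B_2$-bound, membership in $C_0(G)$, and locally uniform convergence to $1$ (as $KLK$ remains compact). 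For smoothness in the Lie group case, convolve further on both sides with a $K$-bi-invariant non-negative smooth compactly supported bump of unit integral.

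The central step is to produce a net $(v_\alpha)$ that additionally satisfies $\|v_\alpha u - u\|_A \to 0$ for every $u\in A(G)$. The plan is to use the predual pairing $A(G)^*=L(G)$: writing $m_v:A(G)\to A(G)$ for multiplication by $v\in B_2(G)$ and $M_v:L(G)\to L(G)$ for its ultraweakly continuous dual, the identity $\la v u, T\ra = \la u, M_v(T)\ra$ shows that $v_\alpha u\to u$ in the weak topology $\sigma(A(G),L(G))$ iff $M_{v_\alpha}(T)\to T$ ultraweakly for every $T\in L(G)$. For $T=\lambda(f)$ with $f\in L^1(G)$ one computes $M_{u_\alpha}(\lambda(f))=\lambda(u_\alpha f)\to\lambda(f)$ ultraweakly by dominated convergence, using $u_\alpha\to 1$ pointwise and $|u_\alpha|$ bounded. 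Combining this with the uniform bound $\sup_\alpha\|M_{u_\alpha}\|_{\cb}\leq C$ and the ultraweak density of $\lambda(L^1(G))$ in $L(G)$, one extends to $M_{u_\alpha}(T)\to T$ ultraweakly for every $T\in L(G)$. Mazur's lemma then provides, for each finite $F\subseteq A(G)$ and each $\epsilon>0$, a convex combination $v$ of the $u_\alpha$'s with $\|vu-u\|_A<\epsilon$ for every $u\in F$; indexing by pairs $(F,\epsilon)$ in the obvious way produces the required net $(v_\alpha)$, which retains the $B_2$-bound, membership in $C_0(G)$, bi-invariance, smoothness, and locally uniform convergence by convexity.

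For the final assertion, given a compact $L\subseteq G$ and $\epsilon>0$, pick $\phi\in A(G)\cap C_c(G)$ with $\phi\equiv 1$ on a neighbourhood of $L$ (existence by \cite[Lemma~3.2]{MR0228628}) and, from the net above, choose $v$ with $\|v\phi-\phi\|_A<\epsilon$. Setting
$$
w\;:=\;v+(1-v)\phi\;=\;v+\phi-v\phi,
$$
one has $w=1$ on $L$ (as $\phi=1$ there forces $w=v+1-v=1$), $w\in C_0(G)$ (since $v\in C_0$ and $(1-v)\phi\in A(G)\subseteq C_0$), and using $\|\cdot\|_{B_2}\leq\|\cdot\|_A$ on $A(G)$,
$$
\|w\|_{B_2}\;\leq\;\|v\|_{B_2}+\|\phi-v\phi\|_{B_2}\;\leq\;C+\|\phi-v\phi\|_A\;\leq\;C+\epsilon.
$$

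The principal obstacle is the bootstrapping step by which point-ultraweak convergence of $M_{u_\alpha}$ is extended from the ultraweakly dense subset $\lambda(L^1(G))$ to all of $L(G)$: because $\lambda(L^1(G))$ is not norm-dense in $L(G)$ in general, a naive $\epsilon/3$-argument does not close, and the extension must be handled carefully by exploiting the completely bounded uniform bound on $(M_{u_\alpha})$ together with the normality of each individual $M_{u_\alpha}$. This delicate interchange of limits is the technical core of the proof.
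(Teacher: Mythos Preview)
Your approach to the point-norm convergence is unnecessarily indirect and, as you yourself flag, contains an unresolved gap: extending point-ultraweak convergence of $(M_{u_\alpha})$ from $\lambda(L^1(G))$ to all of $L(G)$ is genuinely delicate, and neither normality of each $M_{u_\alpha}$ nor the uniform cb-bound lets you interchange the two limits without further work. You never close this.

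The paper bypasses all of this. The convolution trick (Lemma~\ref{lem:B2}(1)) is not merely a regularity enhancement: it \emph{directly} yields $\|v_\alpha w - w\|_A\to 0$ from locally uniform convergence of $(u_\alpha)$, with $v_\alpha = h*u_\alpha$. The point is that for $w\in A(G)\cap C_c(G)$ with support contained in a compact set and $S=\supp(h)^{-1}\supp(w)$, one has $(h*u_\alpha)w = (h*1_S u_\alpha)w$; since $h\in L^2(G)$ and $1_S u_\alpha\in L^2(G)$, the convolution $h*1_S u_\alpha$ lies in $A(G)$ with $A$-norm at most $\|h\|_2\,\|1_S u_\alpha\|_2$. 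Locally uniform convergence $u_\alpha\to 1$ then gives $L^2$-convergence of $1_S u_\alpha$ on the compact $S$, hence $A(G)$-norm convergence of $h*1_S u_\alpha$, and multiplying by $w$ finishes. No duality, no Mazur, no limit interchange. Since $A(G)\cap C_c(G)$ is dense in $A(G)$ and the multipliers are uniformly bounded, this extends to all $w\in A(G)$.

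So the entire argument is: average over $K$ (bi-invariance trick), then convolve by $h$ (or $h^K$), and all the conclusions drop out of Lemmas~\ref{lem:B1}--\ref{lem:B4}. Your construction of $w$ in the second part is essentially identical to the paper's.
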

\begin{proof}
Let $(u_\alpha)$ be a net witnessing $\Lambda_\WH(G) \leq C$. Using the bi-invariance trick (see Appendix~\ref{app:average}) we see that the net $(u_\alpha^K)$ obtained by averaging each $u_\alpha$ from left and right over the compact subgroup $K$ is a net of $K$-bi-invariant functions witnessing $\Lambda_\WH(G)\leq C$. We let $v_\alpha = h^K * u_\alpha^K$, where $h \in C_c(G)$ is a non-negative, continuous function with compact support and integral 1. Using the convolution trick (see Lemma~\ref{lem:B1} and Lemma~\ref{lem:B2}) we see that the net $(v_\alpha)$ has the desired properties (that $v_\alpha\to 1$ uniformly on compacts follows from Lemma~\ref{lem:A1}).

Let $L\subseteq G$ be compact and $\epsilon > 0$ be arbitrary. By \cite[Lemma~3.2]{MR0228628} there is $u\in A(G)$ such that $u(x) = 1$ for all $x\in K$. According to the first part of our proposition, there is $v\in B_2(G)\cap C_0(G)$ such that $\|v\|_{B_2} \leq C$ and $\|vu - u \|_A \leq \epsilon$. Let $w = v - (vu - u)$. Then $w$ has the desired properties.

If $G$ is a Lie group, we let $h$ be as before with the extra condition that $C^\infty(G)$ and use the arguments above.
\end{proof}

Proposition~\ref{prop:equivalent-WH1} gives an equivalent formulation of the weak Haagerup property with constant 1. Recall that a continuous map is \emph{proper}, if the preimage of a compact set is compact.

\begin{prop}\label{prop:equivalent-WH1}
Let $G$ be a locally compact and $\sigma$-compact group. Then $G$ is weakly Haagerup with constant 1, if and only if there is a continuous, proper function $\psi:G\to [0,\infty[$ such that $\|e^{-t\psi}\|_{B_2} \leq 1$ for every $t>0$.

Moreover, we can take $\psi$ to be symmetric.
\end{prop}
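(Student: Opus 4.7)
Given such a $\psi$, the plan is to take $u_t = e^{-t\psi}$ for $t > 0$. Properness of $\psi$ makes each level set $\{\psi \leq s\}$ compact, so $u_t \in C_0(G)$; the hypothesis directly gives $\|u_t\|_{B_2} \leq 1$; and since $\psi$ is bounded on any compact set, $u_t \to 1$ uniformly on compacts as $t \to 0^+$. Hence the net $(u_t)_{t \to 0^+}$ witnesses $\Lambda_\WH(G) \leq 1$. This direction is essentially routine.

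\textbf{Necessity.} For the converse the plan is to build $\psi$ as an infinite sum $\sum_n (1 - u_n)$, where the $u_n$ come from the defining sequence and are made to vanish (in a controlled fashion) off an exhaustion of $G$. By local compactness and $\sigma$-compactness, I fix $G = \bigcup_n K_n$ with $K_n \subseteq \mathrm{int}(K_{n+1})$. Using $\sigma$-compactness to reduce the defining net to a sequence, and replacing each member by its real part (which does not increase the $B_2$-norm, since $\|\Re u\|_{B_2} \leq \tfrac{1}{2}(\|u\|_{B_2}+\|\bar u\|_{B_2}) = \|u\|_{B_2}$), I would inductively choose real-valued $u_n \in B_2(G) \cap C_0(G)$ with $\|u_n\|_{B_2} \leq 1$, $|1 - u_n| < 2^{-n}$ on $K_n$, and (using $u_n \in C_0$ and enlarging $K_{n+1}$ if needed) $|u_n(x)| < 1/2$ for every $x \notin K_{n+1}$. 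Since $\|u_n\|_\infty \leq \|u_n\|_{B_2} \leq 1$ and $u_n$ is real, we have $0 \leq 1 - u_n \leq 2$ throughout.

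\textbf{Analysis of $\psi$.} Setting $\psi = \sum_{n=1}^\infty (1 - u_n)$, on each $K_N$ the tail from $n \geq N$ is dominated by $\sum_{n\geq N} 2^{-n}$, so the series converges uniformly on compacts and $\psi$ is a continuous non-negative function. Properness follows from the observation that if $x \notin K_{N+1}$ then $x \notin K_{n+1}$ for every $n \leq N$, so $1 - u_n(x) > 1/2$ for each such $n$, yielding $\psi(x) > N/2$. For the central estimate $\|e^{-t\psi}\|_{B_2} \leq 1$, I would write $e^{-t(1-u_n)} = e^{-t} e^{tu_n}$ and use that $B_2(G)$ is a Banach algebra to bound $\|e^{-t(1-u_n)}\|_{B_2} \leq e^{-t} \sum_{k\geq 0} t^k\|u_n\|_{B_2}^k/k! \leq e^{-t}\cdot e^t = 1$. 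The partial products $f_N = \prod_{n=1}^N e^{-t(1-u_n)}$ then satisfy $\|f_N\|_{B_2} \leq 1$ by submultiplicativity and converge pointwise (boundedly) to $e^{-t\psi}$. Since a bounded pointwise convergent sequence in $B_2(G)$ is $\sigma(B_2,Q)$-convergent (Lemma~\ref{lem:A1}), weak-$*$ lower semicontinuity of the $B_2$-norm gives $\|e^{-t\psi}\|_{B_2} \leq \liminf_N \|f_N\|_{B_2} \leq 1$.

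\textbf{Symmetry and main obstacle.} For the ``moreover'' part, replace $\psi$ by $\tilde\psi(x) = \tfrac{1}{2}(\psi(x) + \psi(x^{-1}))$. This is continuous and symmetric, and its level sets lie in the compact intersection $\{\psi\leq 2R\}\cap\{x : \psi(x^{-1})\leq 2R\}$, so $\tilde\psi$ is still proper. Since the Herz-Schur norm is invariant under $u \mapsto u(\cdot^{-1})$, submultiplicativity gives $\|e^{-t\tilde\psi}\|_{B_2} \leq \|e^{-(t/2)\psi}\|_{B_2}\|e^{-(t/2)\psi(\cdot^{-1})}\|_{B_2} \leq 1$. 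I expect the main obstacle to be the simultaneous achievement of continuity, finiteness, and properness of $\psi$: the construction turns on the two qualitative controls $|1 - u_n| < 2^{-n}$ on $K_n$ (giving finiteness and continuity via the convergent tail) and $|u_n| < 1/2$ off $K_{n+1}$ (giving properness through $1-u_n > 1/2$), both of which rely crucially on the $C_0$-condition built into the weak Haagerup property.
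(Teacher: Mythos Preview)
Your proof is correct and follows essentially the same strategy as the paper's: build $\psi$ as a series $\sum_n (1-u_n)$ with tail control on a compact exhaustion for continuity, off-compact smallness of the $u_n$ for properness, the factorization $e^{-t(1-u)} = e^{-t}e^{tu}$ for the norm bound, and then symmetrize. The differences are cosmetic---the paper introduces weights $\alpha_n$ and replaces $u_n$ by $|u_n|^2$ rather than taking real parts and adjusting the exhaustion, and it passes to the limit via closedness of the $B_2$-unit ball under locally uniform convergence (Lemma~\ref{lem:B2-unitball}) rather than weak-$*$ lower semicontinuity; one small note: Lemma~\ref{lem:A1} as stated requires locally uniform, not merely pointwise, convergence, but your $f_N = e^{-t\psi_N}$ do converge locally uniformly (since $\psi_N \to \psi$ does), so this is harmless.
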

The idea to the proof of the proposition is taken from the proof of Proposition 2.1.1 in \cite{MR1852148}. A proof in the case where $G$ is discrete can be found in \cite{MR3146826}.
\begin{proof}
Suppose first such a map $\psi$ exists, and let $u_t = e^{-t\psi}$. The fact that $\psi$ is proper implies that $u_t \in C_0(G)$ for every $t>0$. If $K\subseteq G$ is compact, then $\psi(K) \subseteq [0,r]$ for some $r > 0$. Hence $u_t(K) \subseteq [e^{-tr},1]$. This shows that $u_t \to 1$ uniformly on $K$ as $t\to 0$. It follows that $G$ is weakly Haagerup with constant 1.

Conversely, suppose $G$ is weakly Haagerup with constant 1. Since $G$ is locally compact and $\sigma$-compact, it is the union of an increasing sequence $(U_n)_{n=1}^\infty$ of open sets such that the closure $\overline{U_n}$ of $U_n$ is compact and contained in $U_{n+1}$ (see \cite[Proposition~4.39]{MR1681462}). Choose an increasing, unbounded sequence $(\alpha_n)$ of positive real numbers and a decreasing sequence $(\epsilon_n)$ tending to zero such that $\sum_n \alpha_n\epsilon_n$ is finite. For every $n$ choose a function $u_n \in B_2(G) \cap C_0(G)$ with $\|u_n\|_{B_2} \leq 1$ such that
$$
\sup_{g\in \overline{U_n}} |u_n(g) - 1| \leq \epsilon_n/2.
$$
Replace $u_n$ by $|u_n|^2$, if necessary, to ensure $0 \leq u_n \leq 1$ and
$$
\sup_{g\in \overline{U_n}} |u_n(g) - 1| \leq \epsilon_n.
$$
Define $\psi_i:G\to[0,\infty[$ and $\psi:G\to[0,\infty[$ by
$$
\psi_i(g) = \sum_{n=1}^i \alpha_n (1-u_n(g)), \qquad \psi(g) = \sum_{n=1}^\infty \alpha_n (1-u_n(g)).
$$
It is easy to see that $\psi$ is well-defined. We claim that $\psi_i\to \psi$ uniformly on compacts. For this, let $K\subseteq G$ be compact. By compactness, $K \subseteq U_N$ for some $N$, and hence if $g\in K$ and $i\geq N$,
$$
|\psi(g) - \psi_i(g)| = |\sum_{n=i+1}^\infty \alpha_n (1 - u_n(g)) | \leq \sum_{n=i+1}^\infty \alpha_n \epsilon_n.
$$
Since $\sum_n \alpha_n\epsilon_n$ converges, this proves that $\psi_i\to\psi$ uniformly on $K$. In particular, since each $\psi_i$ is continuous, $\psi$ is continuous.

We claim that $\psi$ is proper. Let $R > 0$ be given, and choose $n$ such that $\alpha_n \geq 2R$. Since $u_n \in C_0(G)$, there is a compact set $K \subseteq G$ such that $|u_n(g)| < 1/2$ whenever $g\in G\setminus K$. Now if $\psi(g) \leq R$, then $\psi(g) \leq \alpha_n / 2$, and in particular $\alpha_n (1-u_n(g)) \leq \alpha_n/2$, which implies that $1-u_n(g) \leq 1/2$. Hence we have argued that
$$
\{g\in G \mid \psi(g) \leq R \} \subseteq \{g\in G \mid 1-u_n(g) \leq 1/2 \}\subseteq K.
$$
This proves that $\psi$ is proper.

Now let $t>0$ be fixed. We must show that $\|e^{-t\psi}\|_{B_2} \leq 1$. 
Since $\psi_i$ converges locally uniformly to $\psi$, it will suffice to prove that $\|e^{-t\psi_i}\|_{B_2} \leq 1$, because the unit ball of $B_2(G)$ is closed under locally uniform limits (see Lemma~\ref{lem:B2-unitball}). Observe that
$$
e^{-t\psi_i} = \prod_{n=1}^i e^{-t \alpha_n (1-u_n) },
$$
and so it suffices that $e^{-t \alpha_n (1-u_n) }$ belongs to the unit ball of $B_2(G)$ for each $n$. And this is clear, since
$$
\| e^{-t \alpha_n (1-u_n) } \|_{B_2} = e^{-t \alpha_n} \| e^{t \alpha_n u_n } \|_{B_2} \leq e^{-t \alpha_n}  e^{t \alpha_n \| u_n\|_{B_2} } \leq 1.
$$

To prove the last assertion, put $\bar\psi = \psi + \check\psi$, where $\check\psi(g) = \psi(g^{-1})$. Clearly, $\bar\psi$ is continuous and proper. Finally, for every $t>0$
$$
\|e^{-t\bar\psi}\|_{B_2} \leq \|e^{-t\psi}\|_{B_2} \|e^{-t\check\psi}\|_{B_2} \leq 1,
$$
since $\|\check u\|_{B_2} = \|u\|_{B_2}$ for every Herz-Schur multiplier $u\in B_2(G)$.

\end{proof}

Having settled the definition of the weak Haagerup property for locally compact groups and various reformulations of the property, we move on to prove hereditary results for the class of groups with the weak Haagerup property.

\section{Hereditary properties I}\label{sec:hereditary-1}
In this section we prove hereditary results for the weak Haagerup property of locally compact groups. The hereditary properties under consideration involve passing to closed subgroups, taking quotients by compact normal subgroups, taking finite direct products, taking direct unions of open subgroups and extending from co-F{\o}lner subgroups and lattices to the whole group.

We begin this section with an easy lemma.
\begin{lem}
Suppose $G$ is a locally compact group with a closed subgroup $H$.
\begin{enumerate}
	\item If $u \in C_0(G)$, then $u|_H \in C_0(H)$.
	\item If $u \in B_2(G)$, then $u|_H \in B_2(H)$ and
	$$
	\| u|_H \|_{B_2(H)} \leq \|u\|_{B_2(G)}.
	$$
 \end{enumerate}
\end{lem}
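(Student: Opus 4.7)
The plan is to treat (1) and (2) separately, using two of the characterizations recorded in Proposition~\ref{prop:Gilbert}. Both parts are essentially structural and require no serious computation; the main content is simply recognizing that each relevant condition restricts cleanly along the inclusion $H \hookrightarrow G$.

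For (1), continuity of $u|_H$ is automatic since restriction of a continuous function to a subspace is continuous. To verify the vanishing-at-infinity condition, fix $\epsilon > 0$. Since $u \in C_0(G)$, the set $K_\epsilon = \{x \in G : |u(x)| \geq \epsilon\}$ is compact in $G$. Because $H$ is closed in $G$, the intersection $K_\epsilon \cap H$ is compact in $H$ (using that closed subsets of compacta are compact, and that the subspace and intrinsic topologies on $H$ agree). Outside this compact set we have $|u|_H(x)| < \epsilon$, which gives $u|_H \in C_0(H)$.

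For (2), the quickest route is via characterization (2) of Proposition~\ref{prop:Gilbert}, which says that $\|u\|_{B_2(G)} \leq k$ if and only if $u$ is continuous and for every $n \in \N$ and every $x_1,\ldots,x_n \in G$,
$$
\| [u(x_j^{-1}x_i)]_{i,j=1}^n \|_S \leq k.
$$
Since this condition is preserved under restricting the allowed tuples to lie in the subgroup $H$, the same inequality holds for $u|_H$ with tuples in $H$, so $\|u|_H\|_{B_2(H)} \leq k$. Taking $k = \|u\|_{B_2(G)}$ yields the desired norm estimate. (Equivalently, one could apply characterization (4): a Gilbert-type factorization $u(y^{-1}x) = \la P(x),Q(y)\ra$ with $P,Q : G \to \mc H$ restricts to a factorization of $u|_H$ by $P|_H, Q|_H : H \to \mc H$ without increasing the product of the sup-norms.)

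There is no real obstacle here; the argument is a direct application of the characterizations already collected in Section~\ref{sec:prelim}, and this lemma is used only as a warm-up for the hereditary statement in Theorem~A(1), where one must additionally approximate uniformly on compacts in $H$ rather than in $G$.
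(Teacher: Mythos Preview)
Your proof is correct and matches the paper's approach: the paper simply declares (1) obvious and (2) ``obvious from the characterization in Proposition~\ref{prop:Gilbert},'' which is exactly what you spell out in more detail. Your elaboration via either criterion (2) or (4) of Proposition~\ref{prop:Gilbert} is precisely what the paper has in mind.
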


\begin{proof}
(1) is obvious, and (2) is obvious from the characterization in Proposition~\ref{prop:Gilbert}.
\end{proof}

An immediate consequence of the previous lemma is the following.

\begin{prop}\label{prop:subgroup}
The class of weakly Haagerup groups is stable under taking subgroups. More precisely, if $G$ is a locally compact group with a closed subgroup $H$, then $\Lambda_{\WH}(H) \leq \Lambda_{\WH}(G)$.
\end{prop}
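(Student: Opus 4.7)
The plan is to pull back a witnessing net from $G$ to $H$ by restriction and invoke the preceding lemma to control the Herz-Schur norm and the decay at infinity.

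First I would dispose of the trivial case $\Lambda_\WH(G) = \infty$, where the inequality is automatic. So assume $\Lambda_\WH(G) \leq C < \infty$ and pick, via Definition~\ref{defi:WH}, a net $(u_\alpha)_{\alpha \in A}$ in $B_2(G) \cap C_0(G)$ with $\|u_\alpha\|_{B_2(G)} \leq C$ for all $\alpha$ and $u_\alpha \to 1$ uniformly on compact subsets of $G$. I propose to use the restricted net $(u_\alpha|_H)$ as a witness for $H$.

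Next I would check the three properties in Definition~\ref{defi:WH} for $H$. By part (2) of the preceding lemma, each $u_\alpha|_H$ lies in $B_2(H)$ with $\|u_\alpha|_H\|_{B_2(H)} \leq \|u_\alpha\|_{B_2(G)} \leq C$. By part (1), each $u_\alpha|_H$ lies in $C_0(H)$. For the locally uniform convergence, any compact subset $K \subseteq H$ is also compact in $G$ (since $H$ carries the subspace topology as a closed subgroup of $G$), so $u_\alpha \to 1$ uniformly on $K$, which means exactly that $u_\alpha|_H \to 1$ uniformly on $K \subseteq H$.

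This shows that $(u_\alpha|_H)$ witnesses $\Lambda_\WH(H) \leq C$, and since $C$ was arbitrary above $\Lambda_\WH(G)$ (indeed, the infimum in the definition is attained, so we may simply take $C = \Lambda_\WH(G)$), we obtain $\Lambda_\WH(H) \leq \Lambda_\WH(G)$. There is no real obstacle here: the entire content of the proposition is packaged into the two parts of the preceding lemma, together with the elementary topological observation that compact subsets of $H$ remain compact in $G$.
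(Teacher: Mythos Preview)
Your proof is correct and is precisely the argument the paper intends: it states the proposition as ``an immediate consequence of the previous lemma'' without writing out details, and your unpacking via restriction of a witnessing net is exactly that consequence.
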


\begin{lem}\label{lem:K-quotient}
If $K\subseteq G$ is a compact, normal subgroup, then
\begin{enumerate}
  \item $C(G/K)$ may be canonically and isometrically identified with the subspace of $C(G)$ of functions constant on the cosets of $K$ in $G$.
	\item Under the canonical identification from (1), $C_0(G/K)$ is isometrically identified with the subspace of $C_0(G)$ of functions constant on the cosets of $K$ in $G$.
	\item Under the canonical identification from (1), $B_2(G/K)$ is isometrically identified with the subspace of $B_2(G)$ of functions constant on the cosets of $K$ in $G$.
	\item Moreover, the canonical identification preserves the topology of locally uniform convergence.
\end{enumerate}
\end{lem}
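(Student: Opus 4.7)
The canonical identification is $f\mapsto \tilde f = f\circ q$, where $q:G\to G/K$ is the quotient map. The plan is to establish the four claims in order, with the main work going into (3).

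For (1), I would verify that $q$ being a continuous, open, surjective quotient map gives a bijection between $C(G/K)$ and the subspace $\{\tilde f\in C(G)\mid \tilde f \text{ constant on cosets of } K\}$ — surjectivity of $q$ gives continuity of the descent, and the universal property of the quotient topology gives continuity of the lift. Isometry in sup-norm is immediate from surjectivity of $q$. For (2), the key fact is that $q$ is a \emph{proper} map when $K$ is compact: given a compact $C'\subseteq G/K$, cover it by finitely many open sets each of which is the image of a relatively compact open set in $G$ (using openness of $q$), take the union $L$ of the closures, and note $q^{-1}(C')\subseteq LK$, which is compact. Then for any $\epsilon>0$ the set $\{|f|\geq\epsilon\}$ is compact in $G/K$ if and only if its $q$-preimage $\{|\tilde f|\geq\epsilon\}$ is compact in $G$, which gives the identification of $C_0$ spaces.

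For (3), which is the core of the lemma, I would use the Gilbert-type characterization in Proposition~\ref{prop:Gilbert}(4). One direction is easy: given $f\in B_2(G/K)$ with decomposition $f(v^{-1}u)=\langle P(u),Q(v)\rangle$ for continuous bounded $P,Q:G/K\to\mc H$, the maps $P\circ q,\, Q\circ q$ give a decomposition of $\tilde f$ on $G$ with identical sup products, so $\|\tilde f\|_{B_2(G)}\leq \|f\|_{B_2(G/K)}$. The nontrivial direction is the converse. Suppose $\tilde f\in B_2(G)$ is constant on cosets of $K$ with decomposition $\tilde f(y^{-1}x)=\langle P(x),Q(y)\rangle$ and $(\sup\|P\|)(\sup\|Q\|)\leq \|\tilde f\|_{B_2(G)}+\varepsilon$. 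Since $K$ is normal and $\tilde f$ is $K$-bi-invariant, $\tilde f((yk_1)^{-1}(xk_2))=\tilde f(y^{-1}x)$ for every $k_1,k_2\in K$. Averaging over $K$ with its normalized Haar measure and setting
\[
\bar P(x)=\int_K P(xk)\,dk,\qquad \bar Q(y)=\int_K Q(yk)\,dk,
\]
Fubini gives $\langle \bar P(x),\bar Q(y)\rangle = \tilde f(y^{-1}x)$. By construction $\bar P,\bar Q$ are right-$K$-invariant, hence descend to maps $G/K\to\mc H$, and $\|\bar P(x)\|\leq \sup\|P\|$, $\|\bar Q(y)\|\leq \sup\|Q\|$. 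Continuity of $\bar P$ follows from continuity of $(x,k)\mapsto P(xk)$ together with compactness of $K$ (standard dominated convergence / uniform continuity on compacts). Letting $\varepsilon\to 0$ yields $\|f\|_{B_2(G/K)}\leq \|\tilde f\|_{B_2(G)}$. The main obstacle here is verifying that the averaged maps still give a Gilbert decomposition and remain continuous; the $K$-bi-invariance of $\tilde f$, coupled with normality of $K$, is what makes the averaging compatible with the inner product structure.

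For (4), given a compact $L\subseteq G$, its image $q(L)$ is compact in $G/K$, and conversely for any compact $C'\subseteq G/K$ there exists a compact $L\subseteq G$ with $q(L)=C'$ (as in (2)). Since $\tilde f_\alpha -\tilde f$ is constant on cosets,
\[
\sup_{x\in L}|\tilde f_\alpha(x)-\tilde f(x)| = \sup_{y\in q(L)}|f_\alpha(y)-f(y)|,
\]
so locally uniform convergence on $G/K$ and on $G$ correspond under the identification. This completes the four parts.
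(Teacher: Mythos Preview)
Your argument is correct. Parts (1), (2), and (4) match the paper's proof essentially verbatim: the paper also reduces (2) and (4) to the properness of $q$ and leaves the rest as elementary.

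The genuine difference is in (3). The paper simply cites \cite[Proposition~1.3]{MR996553} and gives no argument, whereas you supply a self-contained proof via the Gilbert characterization (Proposition~\ref{prop:Gilbert}(4)): average the maps $P,Q$ over $K$ on the right to force right-$K$-invariance, use normality of $K$ and $K$-bi-invariance of $\tilde f$ to see that the averaged pair still represents $\tilde f$, and then descend to $G/K$. This is exactly the natural direct argument and recovers the cited result. One small remark: your continuity claim for $\bar P$ via ``dominated convergence'' needs care if $G$ is not first countable (dominated convergence is for sequences); the clean route is the compactness/finite-cover argument you allude to, which is precisely what the paper carries out for the analogous averaging in Lemma~\ref{lem:B3}(1). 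With that caveat the proof is complete, and your approach has the advantage of being self-contained rather than deferring to an external reference.
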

\begin{proof}
\mbox{}

(1) Let $q:G\to G/K$ denote the quotient map. If $f\in C(G)$ is constant on $K$-cosets, it is easy to see that the induced map $\bar f$ defined by $\bar f([x]_K) = f(x)$ is continuous. Conversely, if $g\in C(G/K)$ is given, then the composite $g\circ q$ is continuous on $G$ and constant on cosets.

(2) One must check that $g\in C_0(G/K)$ if and only if $g\circ q\in C_0(G)$. Note first that a subset $L \subseteq G/K$ is compact if and only if $q^{-1}(L)$ is compact. In other words, $q$ is proper. The rest is elementary. It is also clear, that the correspondence is isometric with respect to the uniform norm. This completes (2).

(3) This is Proposition 1.3 in \cite{MR996553}.

(4) One must check that if $(g_n)$ is a net in $C(G/K)$ and $g\in C(G/K)$, then $g_n\to g$ uniformly on compacts if and only if $g_n \circ q \to g\circ q$ uniformly on compacts. This is elementary using properness of $q$.
\end{proof}

\begin{prop}\label{prop:modK}
If $G$ is a locally compact group with a compact, normal subgroup $K\triangleleft G$, then $\Lambda_{\WH}(G/K) = \Lambda_{\WH}(G)$.
\end{prop}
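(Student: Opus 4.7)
The plan is to prove the two inequalities $\Lambda_{\WH}(G/K)\leq\Lambda_{\WH}(G)$ and $\Lambda_{\WH}(G)\leq\Lambda_{\WH}(G/K)$ separately, both essentially by reducing to Lemma~\ref{lem:K-quotient}. Since $K$ is normal, left $K$-cosets coincide with right $K$-cosets, so a function on $G$ is constant on $K$-cosets if and only if it is $K$-bi-invariant. This is the bridge between the bi-invariance averaging trick and the identifications in Lemma~\ref{lem:K-quotient}.

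For the inequality $\Lambda_{\WH}(G/K)\leq\Lambda_{\WH}(G)$, I would start with a net $(u_\alpha)$ in $B_2(G)\cap C_0(G)$ witnessing $\Lambda_{\WH}(G)\leq C$, and apply the bi-invariance trick (as already used in the proof of Proposition~\ref{prop:equi-def} and made precise in Appendix~\ref{app:average}) to obtain a new net $(u_\alpha^K)$ of $K$-bi-invariant functions which still lies in $B_2(G)\cap C_0(G)$, satisfies $\|u_\alpha^K\|_{B_2}\leq\|u_\alpha\|_{B_2}\leq C$, and still converges to $1$ uniformly on compacts (because $1$ is itself $K$-bi-invariant). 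Normality of $K$ then makes each $u_\alpha^K$ constant on $K$-cosets, so by Lemma~\ref{lem:K-quotient}(2)--(3) it corresponds to a function $\bar u_\alpha \in B_2(G/K)\cap C_0(G/K)$ with $\|\bar u_\alpha\|_{B_2(G/K)}=\|u_\alpha^K\|_{B_2(G)}\leq C$. Finally, Lemma~\ref{lem:K-quotient}(4) converts locally uniform convergence $u_\alpha^K\to 1$ on $G$ into $\bar u_\alpha\to 1$ locally uniformly on $G/K$, so $(\bar u_\alpha)$ witnesses $\Lambda_{\WH}(G/K)\leq C$.

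For the reverse inequality $\Lambda_{\WH}(G)\leq\Lambda_{\WH}(G/K)$, the argument is a direct pull-back: given a net $(\bar u_\alpha)$ in $B_2(G/K)\cap C_0(G/K)$ witnessing $\Lambda_{\WH}(G/K)\leq C$, I would set $u_\alpha=\bar u_\alpha\circ q$, where $q\colon G\to G/K$ is the quotient map. Parts (2) and (3) of Lemma~\ref{lem:K-quotient} give $u_\alpha\in B_2(G)\cap C_0(G)$ with $\|u_\alpha\|_{B_2(G)}=\|\bar u_\alpha\|_{B_2(G/K)}\leq C$, and part (4) gives $u_\alpha\to 1$ uniformly on compacts in $G$, so $(u_\alpha)$ witnesses $\Lambda_{\WH}(G)\leq C$.

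There is no real obstacle here; the content of the proposition has been packaged into Lemma~\ref{lem:K-quotient} (in particular the non-trivial isometric identification $B_2(G/K)\cong\{u\in B_2(G):u\text{ is constant on }K\text{-cosets}\}$, which is cited from \cite{MR996553}) and into the bi-invariance averaging trick of the appendix. The only point requiring a sentence of care is the passage from ``$K$-bi-invariant'' to ``constant on $K$-cosets,'' which uses normality of $K$.
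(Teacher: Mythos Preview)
Your proposal is correct and is precisely the argument the paper has in mind: the paper's one-line proof ``Apply the last part of Proposition~\ref{prop:equi-def} and Lemma~\ref{lem:K-quotient}'' unpacks exactly into the two directions you wrote, with the bi-invariance averaging supplying the $K$-bi-invariant net and Lemma~\ref{lem:K-quotient} handling the transfer in both directions. Your remark that normality of $K$ is what turns ``$K$-bi-invariant'' into ``constant on $K$-cosets'' is the only point that deserves a sentence, and you noted it.
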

\begin{proof}

Apply the last part of Proposition~\ref{prop:equi-def} and Lemma~\ref{lem:K-quotient}.
\end{proof}

Concerning direct products of groups we have the following proposition.

\begin{prop}\label{prop:products}
The class of weakly Haagerup groups is stable under finite direct products. More precisely, we have
\begin{align}\label{eq:prod-ineq}
\Lambda_\WH(G\times H) \leq \Lambda_\WH(G)\Lambda_\WH(H)
\end{align}
for locally compact groups $G$ and $H$.
\end{prop}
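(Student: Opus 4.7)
The plan is the standard ``tensor product of witnesses'' argument. Let $(u_\alpha)_{\alpha\in A}$ and $(v_\beta)_{\beta\in B}$ be nets in $B_2(G)\cap C_0(G)$ and $B_2(H)\cap C_0(H)$ witnessing $\Lambda_\WH(G)$ and $\Lambda_\WH(H)$ respectively (recall from Definition~\ref{defi:WH} that the infimum is attained). Define on $G\times H$ the net
$$
w_{(\alpha,\beta)}(g,h) = u_\alpha(g)\, v_\beta(h),
$$
indexed by the product directed set $A\times B$ with coordinatewise order. I claim this net witnesses the desired bound.

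First, $w_{(\alpha,\beta)} \in C_0(G\times H)$: since $u_\alpha$ and $v_\beta$ are continuous and vanish at infinity on the respective factors, their pointwise product vanishes at infinity on $G\times H$ (given $\epsilon>0$, outside $\{|u_\alpha|\geq\epsilon/\|v_\beta\|_\infty\}\times\{|v_\beta|\geq\epsilon/\|u_\alpha\|_\infty\}$ the product is small).

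Second, the Herz--Schur norm satisfies $\|w_{(\alpha,\beta)}\|_{B_2(G\times H)} \leq \|u_\alpha\|_{B_2(G)}\|v_\beta\|_{B_2(H)}$. This is the key step, and I would prove it using the Gilbert-type characterization in Proposition~\ref{prop:Gilbert}(4). Pick Hilbert spaces $\mc H_1,\mc H_2$ and bounded continuous maps $P_1,Q_1\colon G\to\mc H_1$, $P_2,Q_2\colon H\to\mc H_2$ with
$$
u_\alpha(y^{-1}x) = \la P_1(x),Q_1(y)\ra,\qquad v_\beta(s^{-1}r) = \la P_2(r),Q_2(s)\ra,
$$
and with the product of supremum norms of the four maps arbitrarily close to $\|u_\alpha\|_{B_2}\|v_\beta\|_{B_2}$. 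On $G\times H$, set $P(x,r) = P_1(x)\otimes P_2(r)$ and $Q(y,s) = Q_1(y)\otimes Q_2(s)$ in $\mc H_1\otimes\mc H_2$. Then
$$
w_{(\alpha,\beta)}\bigl((y,s)^{-1}(x,r)\bigr) = u_\alpha(y^{-1}x)v_\beta(s^{-1}r) = \la P(x,r),Q(y,s)\ra,
$$
and the bounded/continuous hypotheses pass to the tensor products, with $\sup\|P\| = \sup\|P_1\|\sup\|P_2\|$ and similarly for $Q$. Applying Proposition~\ref{prop:Gilbert}(4) in the converse direction on $G\times H$ gives the desired norm bound.

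Third, $w_{(\alpha,\beta)}\to 1$ uniformly on compacts: given a compact $L\subseteq G\times H$, its projections $\pi_G(L)$ and $\pi_H(L)$ are compact, and
$$
|u_\alpha(g)v_\beta(h) - 1| \leq |u_\alpha(g)|\,|v_\beta(h)-1| + |u_\alpha(g) - 1|,
$$
where $|u_\alpha(g)|\leq\|u_\alpha\|_\infty\leq\|u_\alpha\|_{B_2}\leq\Lambda_\WH(G)$ is uniformly bounded. Since $u_\alpha\to 1$ uniformly on $\pi_G(L)$ and $v_\beta\to 1$ uniformly on $\pi_H(L)$, the right-hand side tends to $0$ uniformly on $L$ along the product directed set, giving the locally uniform convergence.

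The only mild subtlety I expect is the verification of the Herz--Schur norm bound for the tensor product of Gilbert representations; everything else is routine. Combining the three facts yields $\Lambda_\WH(G\times H) \leq \Lambda_\WH(G)\Lambda_\WH(H)$, which is the stated inequality~\eqref{eq:prod-ineq}.
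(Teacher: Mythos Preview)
Your proposal is correct and follows essentially the same approach as the paper: both form the product net $u_\alpha\times v_\beta$ on $G\times H$, establish the Herz--Schur norm bound via the Gilbert-type characterization in Proposition~\ref{prop:Gilbert}, verify membership in $C_0(G\times H)$ by the same compactness argument, and use the product directed set to get locally uniform convergence to $1$. The only difference is that you spell out the tensor-product-of-representations step explicitly, whereas the paper simply asserts that the norm inequality ``easily follows'' from Proposition~\ref{prop:Gilbert}.
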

\begin{proof}
From the characterization in Proposition~\ref{prop:Gilbert}, it easily follows that if $u\in B_2(G)$ and $v\in B_2(H)$, then $u\times v \in B_2(G\times H)$ and $\| u\times v\|_{B_2} \leq \|u\|_{B_2} \|v\|_{B_2}$. Also, if  $u\in C_0(G)$ and $v\in C_0(H)$, then $u\times v \in C_0(G\times H)$: If $\epsilon > 0$ is given, then there are compact subsets $L_1 \subseteq G$, $L_2\subseteq H$ such that
$$
|u(g)| < \epsilon / \|v\|_\infty  \quad \text{ when } g\notin L_1, \quad\text{ and }\quad |v(h)| < \epsilon / \|u\|_\infty \quad \text{ when } h\notin L_2.
$$
(of course, we assume that $\|u\|_\infty\|v\|_\infty \neq 0$). If $(g,h) \notin L_1 \times L_2$, say $h\notin L_2$, then
$$
|u(g)v(h)| < \|u\|_\infty \ \epsilon / \|u\|_\infty = \epsilon.
$$
It is now clear that if $(u_\alpha)$ and $(v_\beta)$ are bounded nets in $B_2(G)\cap C_0(G)$ and $B_2(H)\cap C_0(H)$, respectively, converging locally uniformly to $1$, then the product net $(u_\alpha \times v_\beta)$ (with the product order) belongs to $B_2(G\times H) \cap C_0(G\times H)$ and converges locally uniformly to $1$. This proves that
$$
\Lambda_\WH(G\times H) \leq \Lambda_\WH(G)\Lambda_\WH(H).
$$
\end{proof}

\begin{rem}\label{rem:products}
It would of course be interesting to know if equality actually holds in \eqref{eq:prod-ineq}. The corresponding result for weak amenability is known to be true (see \cite[Corollary~1.5]{MR996553}). It is not hard to see that if either $\Lambda_\WH(G) = 1$ or $\Lambda_\WH(H) = 1$, then \eqref{eq:prod-ineq} is an equality.
\end{rem}

With Proposition~\ref{prop:products} at our disposal, we can show the following.

\begin{cor}\label{cor:class}
The class of weakly Haagerup groups contains groups that are neither weakly amenable nor have the Haagerup property.
\end{cor}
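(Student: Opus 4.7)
The plan is to exhibit an explicit witness of the form $G = G_1 \times G_2$, where $G_1$ is weakly amenable but fails the Haagerup property, and $G_2$ has the Haagerup property but fails weak amenability. Both groups are instantly weakly Haagerup (weak amenability and the Haagerup property each imply the weak Haagerup property), and Proposition~\ref{prop:products} then gives $\Lambda_\WH(G) \leq \Lambda_\WH(G_1)\Lambda_\WH(G_2) < \infty$. Neither deficiency can be repaired by taking a product, because both weak amenability and the Haagerup property are stable under passing to closed subgroups.

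For the concrete choice, I would take $G_1 = \Sp(1,n)$ for some $n\geq 2$ and $G_2 = \Z/2 \wr \F_2$. Both groups are singled out in the introduction: $\Sp(1,n)$ is weakly amenable with $\Lambda_\WA(\Sp(1,n)) = 2n-1$ (so in particular weakly Haagerup) and has Kazhdan's property (T), whereas $\Z/2 \wr \F_2$ has the Haagerup property (so in particular weakly Haagerup) and satisfies $\Lambda_\WA(\Z/2 \wr \F_2) = \infty$. So by Proposition~\ref{prop:products},
\[
\Lambda_\WH(G) \leq \Lambda_\WH(\Sp(1,n)) \cdot \Lambda_\WH(\Z/2\wr\F_2) \leq (2n-1)\cdot 1 < \infty,
\]
which gives the weak Haagerup property for $G$.

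It remains to rule out weak amenability and the Haagerup property for $G$. Both verifications reduce to the hereditary behaviour under closed subgroups. For weak amenability, one uses the standard fact that restriction of Herz-Schur multipliers to closed subgroups is norm-decreasing (parallel to our Proposition~\ref{prop:subgroup} for the weak Haagerup property); since $\Z/2\wr\F_2 \hookrightarrow G$ is a closed subgroup and is not weakly amenable, neither is $G$. For the Haagerup property, one cites the standard stability under closed subgroups (e.g.\ from \cite{MR1852148}); since $\Sp(1,n)$ has property (T) and is non-compact, it fails the Haagerup property, hence so does $G$.

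There is no real obstacle here: the argument is essentially bookkeeping, since every ingredient --- the weak amenability of $\Sp(1,n)$, the Haagerup property of $\Z/2\wr\F_2$, the non-weak-amenability of $\Z/2\wr\F_2$, property (T) for $\Sp(1,n)$ with $n\geq 2$, and the relevant hereditary statements --- is either recalled in the introduction of the paper or a classical result. The only new input is Proposition~\ref{prop:products}, which has just been established.
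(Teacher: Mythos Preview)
Your proposal is correct and matches the paper's own proof essentially verbatim: the paper also takes the product $\Sp(1,n)\times(\Z/2\wr\F_2)$ for $n\geq 2$, invokes Proposition~\ref{prop:products} to obtain the weak Haagerup property, and then uses that both weak amenability and the Haagerup property pass to closed subgroups to rule each out.
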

\begin{proof}
It is known that the Lie group $G = \Sp(1,n)$ is weakly amenable with $\Lambda_\WA(G) = 2n - 1$ (see \cite{MR996553}). It is also known that $G$ has Property (T) when $n\geq 2$ (see \cite[Section~3.3]{MR2415834}), and hence $G$ does not have the Haagerup property (since $G$ is not compact). As we mentioned earlier, the group $H = \Z/2 \wr \F_2$ has the Haagerup property, but is not weakly amenable. Hence both $G$ and $H$ have the weak Haagerup property. It now follows from the previous proposition that the group $G\times H$ has the weak Haagerup property.

Both the Haagerup property and weak amenability passes to subgroups, so it also follows that $G\times H$ has neither of these properties. 
\end{proof}

\begin{rem}
If you want an example of a \emph{discrete} group with the weak Haagerup property outside the class of weakly amenable groups and the Haagerup groups, then take $\Gamma$ to be a lattice in $\Sp(1,n)$ and consider the group $\Gamma\times H$, where again $H = \Z/2\wr\F_2$.

The group constructed in the proof of Corollary~\ref{cor:class} is of course tailored exactly to prove the corollary, and one might argue that it is not a natural example. It would be interesting to find more natural examples, for instance a simple group.
\end{rem}

Using the characterization of Herz-Schur multipliers given in Proposition~\ref{prop:Gilbert}, it is not hard to prove the following (see \cite[Lemma~4.2]{MR1373647}).

\begin{lem}\label{lem:HS-extension}
Let $H$ be an open subgroup of a locally compact group $G$. Extend $u\in B_2(H)$ to $\tilde u : G\to\C$ by letting $\tilde u(x) = 0$ when $x\notin H$. Then $\tilde u\in B_2(G)$ and $\|u\|_{B_2} = \|\tilde u\|_{B_2}$.

Moreover, if $ u\in C_0(H)$, then $\tilde u\in C_0(G)$.
\end{lem}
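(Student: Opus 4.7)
The lower bound $\|u\|_{B_2(H)} \leq \|\tilde u\|_{B_2(G)}$ is already given by the previous lemma (restriction to the closed subgroup $H$), so the substance is to verify $\tilde u\in B_2(G)$ with $\|\tilde u\|_{B_2(G)}\leq \|u\|_{B_2(H)}$. I will use the Gilbert-type factorization from Proposition~\ref{prop:Gilbert}~(4).

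\textbf{The main construction.} Fix a Hilbert space $\mc H$ and bounded continuous maps $P,Q\colon H\to\mc H$ with
$$u(y^{-1}x) = \la P(x),Q(y)\ra \quad (x,y\in H), \qquad \bigl(\sup_H\|P\|\bigr)\bigl(\sup_H\|Q\|\bigr)\leq\|u\|_{B_2}.$$
Pick a set $\{g_i\}_{i\in I}$ of representatives for the left cosets $G/H$; every $x\in G$ has a unique expression $x=g_i h$ with $h\in H$. Let $\{e_i\}_{i\in I}$ denote the standard basis of $\ell^2(I)$, and set $\mc H' = \ell^2(I)\tensor\mc H$. Define $\tilde P, \tilde Q\colon G\to\mc H'$ by
$$\tilde P(g_i h) = e_i\tensor P(h),\qquad \tilde Q(g_i h) = e_i\tensor Q(h).$$
For $x = g_i h$ and $y = g_j h'$ one computes
$$\la \tilde P(x),\tilde Q(y)\ra = \delta_{ij}\la P(h),Q(h')\ra.$$
When $i=j$, the coset of $y^{-1}x=(h')^{-1}h$ lies in $H$ and the right-hand side equals $u(y^{-1}x)=\tilde u(y^{-1}x)$; when $i\neq j$, we have $y^{-1}x\notin H$ and both sides are $0$. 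So the factorization identity $\tilde u(y^{-1}x)=\la\tilde P(x),\tilde Q(y)\ra$ holds for all $x,y\in G$, and clearly $\sup_G\|\tilde P\| = \sup_H\|P\|$ and similarly for $\tilde Q$.

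\textbf{Continuity.} Since $H$ is open, each coset $g_iH$ is open in $G$, and the cosets partition $G$. On the open set $g_iH$, the map $x\mapsto \tilde P(x) = e_i\tensor P(g_i^{-1}x)$ is continuous (left translation is a homeomorphism and $P$ is continuous on $H$); hence $\tilde P$ is continuous on $G$, and likewise $\tilde Q$. Proposition~\ref{prop:Gilbert}~(4) now yields $\tilde u\in B_2(G)$ with $\|\tilde u\|_{B_2}\leq\|u\|_{B_2}$, giving the norm equality.

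\textbf{The $C_0$ assertion.} Since $H$ is open, $G\setminus H$ is a union of open cosets and hence $H$ is clopen, so $\tilde u$ is automatically continuous as a function on $G$ (it is continuous on each of the two open pieces $H$ and $G\setminus H$). Given $\epsilon>0$, choose a compact $K\subseteq H$ with $|u(h)|<\epsilon$ for $h\in H\setminus K$; then $K$ is compact in $G$ and $|\tilde u(x)|<\epsilon$ for $x\in G\setminus K$, because $\tilde u$ vanishes off $H$. Thus $\tilde u\in C_0(G)$. No step here is a genuine obstacle; the only point one must be careful about is that $H$ being open forces the cosets to be clopen, which is exactly what makes both the continuity of $\tilde P,\tilde Q$ and the $C_0$ extension trivial.
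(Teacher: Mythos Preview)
Your proof is correct and follows exactly the route the paper indicates: the paper does not spell out a proof but simply points to Proposition~\ref{prop:Gilbert} (the Gilbert factorization) and cites \cite[Lemma~4.2]{MR1373647}, and your coset-indexed tensor construction $\tilde P(g_ih)=e_i\otimes P(h)$ is precisely the standard way to carry this out. The verification of the factorization identity, the continuity via openness of the cosets, and the $C_0$ statement are all handled cleanly.
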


We note that there are examples of groups $H\leq G$, where some $ u\in B_2(H)$ has no extension to $B_2(G)$ (see \cite[Theorem~4.4]{Brannan-Forrest}). In these examples, $H$ is of course not open.

\begin{prop}
If $(G_i)_{i\in I}$ is a directed set of open subgroups in a locally compact group $G$, and $G = \bigcup_i G_i$, then
$$
\Lambda_\WH(G) = \sup_i \Lambda_\WH(G_i).
$$
\end{prop}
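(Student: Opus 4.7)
The plan is to prove the two inequalities separately. The inequality $\sup_i \Lambda_\WH(G_i) \leq \Lambda_\WH(G)$ is immediate from Proposition~\ref{prop:subgroup}, since each open subgroup $G_i$ is in particular closed.

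For the reverse inequality, let $C = \sup_i \Lambda_\WH(G_i)$, and assume $C < \infty$ (otherwise there is nothing to prove). The first key step is a compactness argument: any compact subset $K \subseteq G$ is contained in some $G_i$. Indeed, since the $G_i$ are open and cover $G$, compactness gives $K \subseteq G_{i_1} \cup \cdots \cup G_{i_n}$ for finitely many indices, and by directedness of the family $(G_i)$ there is $j\in I$ with $G_{i_1}\cup\cdots\cup G_{i_n} \subseteq G_j$, whence $K\subseteq G_j$.

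Next, for each index $i$, choose a net $(u_\alpha^{(i)})_{\alpha\in A_i}$ in $B_2(G_i)\cap C_0(G_i)$ witnessing $\Lambda_\WH(G_i)$; that is, $\|u_\alpha^{(i)}\|_{B_2(G_i)} \leq \Lambda_\WH(G_i) \leq C$ and $u_\alpha^{(i)} \to 1$ locally uniformly on $G_i$ (recall the infimum in Definition~\ref{defi:WH} is attained). By Lemma~\ref{lem:HS-extension}, the zero-extension $\tilde u_\alpha^{(i)}$ lies in $B_2(G)\cap C_0(G)$ with $\|\tilde u_\alpha^{(i)}\|_{B_2(G)} = \|u_\alpha^{(i)}\|_{B_2(G_i)} \leq C$.

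Now index a new net by pairs $(K,\epsilon)$ where $K\subseteq G$ is compact and $\epsilon > 0$, ordered by $(K,\epsilon)\preceq (K',\epsilon')$ iff $K\subseteq K'$ and $\epsilon\geq \epsilon'$. For each such $(K,\epsilon)$, pick $i$ with $K\subseteq G_i$, then pick $\alpha\in A_i$ so that $\sup_{x\in K}|u_\alpha^{(i)}(x)-1|<\epsilon$, and set $v_{K,\epsilon} = \tilde u_\alpha^{(i)}$. By construction $\|v_{K,\epsilon}\|_{B_2}\leq C$, and for any fixed compact $K_0\subseteq G$ and $\epsilon_0 > 0$, whenever $(K,\epsilon)\succeq (K_0,\epsilon_0)$ we have $\sup_{x\in K_0}|v_{K,\epsilon}(x)-1|<\epsilon_0$. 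Hence $v_{K,\epsilon}\to 1$ uniformly on compacts, which yields $\Lambda_\WH(G)\leq C$.

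The potential obstacle is really just the compactness-plus-directedness observation in the second paragraph; once that is in hand, extension via Lemma~\ref{lem:HS-extension} preserves both the Herz-Schur norm and the $C_0$ condition, so the net construction is routine.
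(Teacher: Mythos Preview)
Your proof is correct and follows essentially the same route as the paper: both use the compactness-plus-directedness observation to place any compact set inside some $G_i$, and then Lemma~\ref{lem:HS-extension} to extend Herz-Schur multipliers vanishing at infinity from $G_i$ to $G$ without increasing the norm. The only cosmetic difference is that the paper invokes Proposition~\ref{prop:equi-def} to obtain a single function $w\in B_2(G_i)\cap C_0(G_i)$ with $w\equiv 1$ on the compact set and $\|w\|_{B_2}\leq \Lambda_\WH(G_i)+\epsilon$, whereas you work directly from the defining net (using that the infimum in Definition~\ref{defi:WH} is attained) and build a $(K,\epsilon)$-indexed net of extensions; both are equally straightforward.
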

\begin{proof}
From Proposition~\ref{prop:subgroup} we already know that $\Lambda_\WH(G) \geq \sup_i \Lambda_\WH(G_i)$. We will now show the other inequality. We may assume that $\sup_i \Lambda_\WH(G_i) < \infty$ since otherwise there is nothing to prove.

Let $L\subseteq G$ be a compact set and let $\epsilon > 0$ be given. By compactness and directedness there is $i\in I$ such that $L\subseteq G_i$. Using Proposition~\ref{prop:equi-def} we may find $w\in B_2(G_i) \cap C_0(G_i)$ so that
$$
\|w\|_{B_2} \leq \Lambda_\WH(G_i)+\epsilon \leq \sup_i \Lambda_\WH(G_i) + \epsilon,
$$
$$
w(x) = 1 \quad\text{ for every } x\in L.
$$
By Lemma~\ref{lem:HS-extension}, there is $\tilde w \in B_2(G) \cap C_0(G)$ such that
$$
\|\tilde w\|_{B_2} \leq \sup_i \Lambda_\WH(G_i)+\epsilon,
$$
$$
\tilde w(x) = 1 \quad\text{ for every } x\in L.
$$
Since $L$ and $\epsilon$ were arbitrary, it now follows that
$$
\Lambda_\WH(G) \leq \sup_i \Lambda_\WH(G_i),
$$
and the proof is complete.
\end{proof}

The next result, Proposition~\ref{prop:proper-cocycle}, is inspired by \cite{MR1756981}. Let $G$ be a locally compact, second countable group, and let $(X,\mu)$ be a standard measure space with a Borel action of $G$. We assume that the measure $\mu$ is a probability measure which is invariant under the action. In \cite{MR1756981}, quasi-invariant measures are considered as well, but we will stick to invariant measures all the time, because the invariance is needed in the proof of Lemma~\ref{lem:inducing-properties} (1) and (3).

Further, let $H$ be a locally compact, second countable group, and let $\alpha:G\times X \to H$ be a Borel cocycle, i.e. $\alpha$ is a Borel map and for all $g,h\in G$ we have
$$
\alpha(gh,x) = \alpha(g,hx)\alpha(h,x) \qquad\text{for } \mu\text{-almost all } x\in X.
$$

The following definition of a proper cocycle is taken from \cite{MR1756981}, although we have modified it slightly.
\begin{defi}\label{defi:proper-cocycle}
Let $\alpha:G\times X\to H$ be as above. We say that $\alpha$ is \emph{proper}, if there is a generating family $\mc A$ of Borel subsets of $X$ such that the following three conditions hold.
\begin{enumerate}
	\item $X$ is the union of an increasing sequence of elements in $\mc A$.
	\item For every $A\in\mc A$ and every compact subset $L$ of $G$ the set $\alpha(L\times A)$ is pre-compact.
	\item For every $A\in\mc A$ and every compact subset $L$ of $H$, the set $K(A,L)$ of elements $g\in G$ such that $\{ x \in A\cap g^{-1}A \mid \alpha(g,x) \in L\}$ has positive $\mu$-measure is pre-compact.
\end{enumerate}
\end{defi}

We mention the following examples of proper cocycles. All examples are taken from \cite[p. 490]{MR1756981}. 

\begin{exam}\label{exam:proper-cocycle}
\mbox{}

\begin{enumerate}
	\item[(a)] Suppose $H$ is a closed subgroup of $G$ and that $X = G/H$ has an invariant probability measure $\mu$ for the action of left translation. Let $\sigma: G/H \to G$ be a regular Borel cross section of the projection map $p: G \to G/H$, i.e. a Borel map such that $p\circ\sigma = \id_{G/H}$ and $\sigma(L)$ has compact closure for each compact $L\subseteq G/H$ (see \cite[Lemma~1.1]{MR0044536}). We define $\alpha: G\times X\to H$ by
	$$
	\alpha(g,x) = \sigma(gx)^{-1} g\sigma(x).
	$$
	With $\mc A$ the family of all compact subsets of $X$, we verify the three conditions in Definition~\ref{defi:proper-cocycle}. Since $G$ is second countable, it is also $\sigma$-compact. Then $X$ is also $\sigma$-compact, and condition (1) is satisfied.
	
	Let $A\in\mc A$ and let $L\subseteq G$ be compact. By regularity of $\gamma$,
	$$
	\alpha(L\times A) \subseteq \sigma(LA)^{-1} L\sigma(A)
	$$
	is pre-compact, and condition (2) is satisfied.
	
	Let $A\in\mc A$ and let $L\subseteq H$ be compact. It is easy to see that
	$$
	K(A,L) \subseteq \sigma(A)L\sigma(A)^{-1}.
	$$
	Again by regularity of $\sigma$, it follows that $K(A,L)$ is pre-compact. Thus, condition (3) is satisfied.
	\item[(b)] Suppose $K\triangleleft G$ is normal and compact. Let $H = G/K$, let $X = K$ and let $\mu$ be the normalized Haar measure on $K$. Then $G$ acts on $K$ by conjugation, and $\mu$ is invariant under this action. We let $\mc A$ be the collection of all Borel subsets of $K$, and we define $\alpha:G\times X\to H$ by
	$$
	\alpha(g,x) = p(g),
	$$
	where $p: G \to H$ is the projection map. Conditions (1) and (2) of Definition~\ref{defi:proper-cocycle} are immediate. For condition (3) we first note that if $L\subseteq H$ is compact, then $\alpha^{-1}(L) = p^{-1}(L) \times K$. Since $p$ is a homomorphism with compact kernel, it is proper. Hence $p^{-1}(L)$ is compact, and $K(A,L)\subseteq p^{-1}(L)$.
\end{enumerate}
We emphasize the following special case of (a).
\begin{enumerate}
	\item[(c)] Recall that a subgroup $\Gamma \subseteq G$ is a \emph{lattice}, if $\Gamma$ is discrete and the quotient space $G/\Gamma$ admits a finite $G$-invariant measure. Hence, when $H = \Gamma$ is a lattice in $G$, we are in the situation mentioned in (a).
\end{enumerate}

\end{exam}

Let $G$ and $H$ be locally compact, second countable groups, and let $(X,\mu)$ be a standard $G$-space with a $G$-invariant probability measure. Let $\alpha:G\times X\to H$ be a proper Borel cocycle. When $ u\in B_2(H)$ we define $\hat u:G\to\C$ by
\begin{align}\label{eq:uhat}
\hat u(g) = \int_X  u(\alpha(g,x))\ d\mu(x), \qquad g\in G.
\end{align}
The construction is taken from \cite{MR1756981}, where it is shown in Lemma~2.11 that $\hat u \in B_2(G)$ and also $\|\hat u\|_{B_2} \leq \| u\|_{B_2}$. We refer to Lemma~\ref{lem:HS-continuity} for the continuity of $\hat u$.

\begin{lem}\label{lem:inducing-properties}
Let $\alpha:G\times X\to H$ be a proper cocycle as above, and let $u\in B_2(H)$ be given.
\begin{enumerate}
	\item $\hat u\in B_2(G)$ and $\|\hat u\|_{B_2} \leq \| u\|_{B_2}$.
	\item $\|\hat u\|_\infty \leq \| u\|_\infty$.
	\item If $ u\in C_0(H)$, then $\hat u\in C_0(G)$.
\end{enumerate}
\end{lem}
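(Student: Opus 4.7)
The strategy is to dispatch (2) and (1) quickly---these are respectively trivial and essentially covered by \cite[Lemma~2.11]{MR1756981}---and focus the argument on (3), which is the crux. Part (2) is immediate since $\mu$ is a probability measure: $|\hat u(g)| \le \int_X |u(\alpha(g,x))|\,d\mu(x) \le \|u\|_\infty$. For part (1), I would use the Gilbert characterisation of Proposition~\ref{prop:Gilbert}~(5). Writing $u(y^{-1}x) = \langle P(x), Q(y)\rangle_{\mc K}$ with bounded Borel $P,Q$ satisfying $(\sup\|P\|)(\sup\|Q\|) \le \|u\|_{B_2}$, and using the cocycle identity $\alpha(g_2^{-1}g_1,x) = \alpha(g_2, g_2^{-1}g_1 x)^{-1}\alpha(g_1,x)$ together with the substitution $z = g_1 x$ (legal by $G$-invariance of $\mu$), one computes
$$\hat u(g_2^{-1}g_1) = \langle \tilde P(g_1), \tilde Q(g_2)\rangle_{L^2(X;\mc K)},$$
where $\tilde P(g)(z) := P(\alpha(g, g^{-1}z))$ and $\tilde Q(g)(z) := Q(\alpha(g, g^{-1}z))$. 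Since $\|\tilde P(g)\|_{L^2(X;\mc K)} \le \sup\|P\|$ and similarly for $\tilde Q$, Proposition~\ref{prop:Gilbert}~(5) delivers $\hat u \in B_2(G)$ with the claimed norm bound; continuity of $\hat u$ is then supplied by Lemma~\ref{lem:HS-continuity}.

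The main work is (3). Given $\epsilon > 0$, set $C = \|u\|_\infty$ (we may assume $C > 0$). Using property~(1) of Definition~\ref{defi:proper-cocycle} I would pick $A \in \mc A$ with $\mu(X \setminus A) < \epsilon/(4C)$, and using $u \in C_0(H)$ pick a compact $L \subseteq H$ with $|u(h)| < \epsilon/4$ for $h \notin L$. Property~(3) of Definition~\ref{defi:proper-cocycle} then makes $K := \overline{K(A, L)}$ a compact subset of $G$, and for $g \notin K$ the set $\{x \in A \cap g^{-1}A : \alpha(g, x) \in L\}$ has $\mu$-measure zero. I would split the integral defining $\hat u(g)$ into pieces over $X \setminus A$, $\{x \in A : \alpha(g,x) \notin L\}$, and $\{x \in A : \alpha(g,x) \in L\}$, bounding the first two by $C\mu(X \setminus A) < \epsilon/4$ and $(\epsilon/4)\mu(A) \le \epsilon/4$, respectively. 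For the third piece, decompose $A = (A \cap g^{-1}A) \cup (A \setminus g^{-1}A)$: the $A \cap g^{-1}A$ part contributes zero by choice of $g$, and the $A \setminus g^{-1}A$ part is bounded by $C\mu(A \setminus g^{-1}A) \le C(1 - \mu(A)) < \epsilon/4$, where the crucial identity $\mu(g^{-1}A) = \mu(A)$ uses $G$-invariance of $\mu$. Summing yields $|\hat u(g)| < 3\epsilon/4 < \epsilon$, so $\hat u \in C_0(G)$.

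The hard part is really the bridge in (3): property~(3) of the proper cocycle controls only $\mu(\{x \in A \cap g^{-1}A : \alpha(g,x) \in L\})$, whereas the naive estimate of $|\hat u(g)|$ produces the a priori larger quantity $\mu(\{x \in A : \alpha(g,x) \in L\})$. The $G$-invariance of $\mu$ is precisely what closes this gap, letting the excess $\mu(A \setminus g^{-1}A)$ be absorbed into the already-small $\mu(X \setminus A)$; this is also the reason the lemma insists on an invariant (rather than merely quasi-invariant) measure, as foreshadowed in the paragraph preceding Definition~\ref{defi:proper-cocycle}.
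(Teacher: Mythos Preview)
Your proposal is correct and follows essentially the same approach as the paper. For (1) the paper simply cites \cite[Lemma~2.11]{MR1756981}, whereas you spell out the Gilbert-type argument behind that citation; for (3) your four-piece decomposition is a slight refinement of the paper's three-piece one ($X_g$, $X\setminus(A\cap g^{-1}A)$, $(A\cap g^{-1}A)\setminus X_g$), but both hinge on the same point you correctly highlight: $G$-invariance of $\mu$ gives $\mu(A\setminus g^{-1}A)\le\mu(X\setminus g^{-1}A)=\mu(X\setminus A)$, which is exactly how the paper obtains its bound $\mu(X\setminus(A\cap g^{-1}A))\le 2\epsilon$.
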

\begin{proof}
\mbox{}

(1) This is \cite[Lemma~2.11]{MR1756981}.

(2) This is obvious.

(3) Given $\epsilon > 0$ there is $L\subseteq H$ compact such that $h\notin L$ implies $| u(h)| \leq \epsilon$. Since $X$ is the union of an increasing sequence of sets in $\mc A$, we may take $A\in\mc A$ such that $\mu(X\setminus A) \leq \epsilon$. The set $K = \overline{K(A,L)}$ is compact in $G$, and if $g\notin K$ then
$$
X_g = \{x\in A\cap g^{-1}A \mid \alpha(g,x) \in L\}
$$
is a null set. Hence for $g\notin K$
\begin{align*}
|\hat u(g)| &\leq \int_{X\setminus X_g} | u(\alpha(g,x))| \ d\mu(x) \\
&\leq \int_{X\setminus (A\cap g^{-1}A)} \| u\|_\infty \ d\mu(x)  +  \int_{(A\cap g^{-1}A)\setminus X_g} \epsilon \ d\mu(x) \\
&\leq  2 \epsilon \|u\|_\infty + \epsilon.
\end{align*}
This shows that $\hat u\in C_0(G)$.
\end{proof}

\begin{lem}\label{lem:inducing-continuous}
Let $\alpha:G\times X\to H$ be a proper cocycle as above. The contractive linear map $B_2(H)\to B_2(G)$ defined by $u\mapsto \hat u$, where $\hat u$ is given by \eqref{eq:uhat}, is continuous on norm bounded sets with respect to the topology of locally uniform convergence.
\end{lem}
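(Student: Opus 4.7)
The plan is to use linearity of the induction map $u \mapsto \hat u$ to reduce continuity on bounded sets to the statement: if $(u_\alpha)$ is a net in $B_2(H)$ with $\|u_\alpha\|_{B_2} \leq C$ and $u_\alpha \to 0$ uniformly on compact subsets of $H$, then $\hat u_\alpha \to 0$ uniformly on compact subsets of $G$. Linearity of the induction map is evident from the formula \eqref{eq:uhat}, and contractivity is Lemma~\ref{lem:inducing-properties}~(1); so the map is well-defined on bounded sets with the right size estimates, and we only need the locally uniform continuity statement.

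The core of the argument exploits properness of the cocycle together with the fact that $\mu$ is a probability measure. Fix a compact set $K \subseteq G$ and $\epsilon > 0$. By Definition~\ref{defi:proper-cocycle}~(1) the space $X$ is exhausted by an increasing sequence from the generating family $\mc A$, so by dominated/monotone convergence we may choose $A \in \mc A$ with $\mu(X \setminus A) < \epsilon$. Then by Definition~\ref{defi:proper-cocycle}~(2) the set $L := \overline{\alpha(K \times A)}$ is compact in $H$. On $L$ the net converges uniformly to $0$, so there is $\alpha_0$ such that $\sup_{h \in L} |u_\alpha(h)| < \epsilon$ for all $\alpha \geq \alpha_0$.

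For any $g \in K$ and $\alpha \geq \alpha_0$, split the defining integral as
\begin{align*}
|\hat u_\alpha(g)|
&\leq \int_A |u_\alpha(\alpha(g,x))|\, d\mu(x) + \int_{X \setminus A} |u_\alpha(\alpha(g,x))|\, d\mu(x) \\
&\leq \sup_{h \in L} |u_\alpha(h)| \cdot \mu(A) + \|u_\alpha\|_\infty \cdot \mu(X \setminus A) \\
&\leq \epsilon + C\epsilon,
\end{align*}
where we used $\mu(A) \leq 1$ and $\|u_\alpha\|_\infty \leq \|u_\alpha\|_{B_2} \leq C$. Since this bound is independent of $g \in K$, we obtain $\sup_{g \in K} |\hat u_\alpha(g)| \leq (1+C)\epsilon$ for all $\alpha \geq \alpha_0$. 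As $\epsilon$ was arbitrary, $\hat u_\alpha \to 0$ uniformly on $K$, completing the proof.

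The only subtle point is choosing the splitting set $A$ correctly: since $X$ may be noncompact and $\alpha(g,\cdot)$ need not have relatively compact image in $H$, we cannot directly use uniform convergence of $u_\alpha$ on $\alpha(K \times X)$. The proper cocycle hypothesis is precisely what lets us localize to $\alpha(K \times A)$ for some $A$ of measure close to $\mu(X) = 1$, and the finite measure of $X$ is what controls the tail on $X \setminus A$ using only the uniform bound on $\|u_\alpha\|_{B_2}$. This is the main (and essentially only) obstacle, and it is resolved by the two conditions (1) and (2) of Definition~\ref{defi:proper-cocycle}.
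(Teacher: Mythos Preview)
Your proof is correct and follows essentially the same approach as the paper: reduce by linearity to nets converging to $0$, then for a given compact $K\subseteq G$ choose $A\in\mc A$ with small complement, use condition (2) of Definition~\ref{defi:proper-cocycle} to get a compact $L=\overline{\alpha(K\times A)}\subseteq H$, and split the integral over $A$ and $X\setminus A$. The only cosmetic difference is that the paper scales the choices of $\epsilon$ to land exactly on $\epsilon$ rather than $(1+C)\epsilon$, which is immaterial.
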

\begin{proof}
Suppose $ u_n\to 0$ in $B_2(H)$ uniformly on compacts, and $\| u_n\|_{B_2} < c$ for every $n$. In particular, $\| u_n\|_\infty < c$ for every $n$. Let $K\subseteq G$ be compact, and let $\epsilon > 0$ be given. Choose $A\in\mc A$ such that $\mu(X\setminus A) \leq \epsilon/2c$, and let $L = \overline{\alpha(K\times A)}$. Since $L$ is compact, we have eventually that $| u_n(h)| < \epsilon/2$ for every $h\in L$. Then for $g\in K$ we have
\begin{align*}
|\hat u_n(g)| = \left| \int_X  u_n(\alpha(g,x)) \ d\mu(x) \right| 
\leq \int_A \epsilon/2 \ d\mu(x) + \int_{X\setminus A} c \ d\mu(x) 
\leq \epsilon.
\end{align*}
This completes the proof.
\end{proof}

\begin{prop}\label{prop:proper-cocycle}
Let $G$ and $H$ be locally compact, second countable group, and let $(X,\mu)$ be a standard Borel $G$-space with a $G$-invariant probability measure. If there is a proper Borel cocycle $\alpha:G\times X\to H$, then $\Lambda_{\WH}(G) \leq \Lambda_{\WH}(H)$.
\end{prop}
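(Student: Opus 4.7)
My plan is to transport a witnessing family from $H$ to $G$ via the induction map $u\mapsto\hat u$ of \eqref{eq:uhat}. I would assume $\Lambda_\WH(H)=C<\infty$ (else there is nothing to prove) and fix a net $(u_\alpha)$ in $B_2(H)\cap C_0(H)$ with $\|u_\alpha\|_{B_2}\leq C$ and $u_\alpha\to 1$ uniformly on compacts. The claim will be that the induced net $(\hat u_\alpha)$ witnesses $\Lambda_\WH(G)\leq C$.

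Two of the three required properties are packaged by Lemma~\ref{lem:inducing-properties}: parts (1) and (3) give $\hat u_\alpha\in B_2(G)\cap C_0(G)$ with $\|\hat u_\alpha\|_{B_2}\leq\|u_\alpha\|_{B_2}\leq C$. For the remaining property, locally uniform convergence to $1$, the key observation is that $u\mapsto\hat u$ is linear and that the constant function $1$ on $H$ satisfies $\hat 1 = 1$, since $\mu$ is a probability measure. Consequently $1-\hat u_\alpha=\widehat{1-u_\alpha}$, and the net $(1-u_\alpha)$ is bounded in $B_2(H)$ (by $1+C$, using $\|1\|_{B_2}=1$, which follows from Proposition~\ref{prop:Gilbert}(4) with $P\equiv Q\equiv 1$) and tends to $0$ uniformly on compacts. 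Applying Lemma~\ref{lem:inducing-continuous} then yields $\widehat{1-u_\alpha}\to 0$, hence $\hat u_\alpha\to 1$, uniformly on compacts, which completes the argument.

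The only potential hiccup I foresee is that Lemma~\ref{lem:inducing-continuous} is phrased for sequences while $(u_\alpha)$ is a priori just a net. However, inspection of its proof shows that $A\in\mc A$ and the compact set $L\subseteq H$ are chosen purely in terms of $K$ and $\epsilon$, after which one only invokes eventual smallness of the test functions on $L$, which makes perfect sense for nets. As a backup, second countability of $G$ allows one to extract a suitable sequence from $(u_\alpha)$ before inducing. I do not anticipate any genuine obstacle beyond this mild bookkeeping.
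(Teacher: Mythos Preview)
Your proposal is correct and follows essentially the same route as the paper: induce the witnessing net via $u\mapsto\hat u$, use Lemma~\ref{lem:inducing-properties} for the $B_2\cap C_0$ membership and norm bound, and Lemma~\ref{lem:inducing-continuous} for locally uniform convergence. Your explicit unpacking of $\hat 1=1$ and the reduction to $\widehat{1-u_\alpha}\to 0$ is exactly what the paper's one-line citation of Lemma~\ref{lem:inducing-continuous} is implicitly using, and your remark about nets versus sequences is a fair piece of bookkeeping that the paper glosses over.
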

\begin{proof}
Suppose $\Lambda_\WH(H) \leq C$, and choose a net $( u_i)$ in $B_2(H) \cap C_0(H)$ such that
$$
\| u_i\|_{B_2} \leq C  \quad\text{ for every } i,
$$
$$
 u_i\to 1 \text{ uniformly on compacts }.
$$
It follows from Lemma~\ref{lem:inducing-properties} that $\hat u_i\in B_2(G)\cap C_0(G)$ and
$$
\|\hat u_i\|_{B_2} \leq C  \quad\text{ for every } i.
$$
From Lemma~\ref{lem:inducing-continuous} we also see that
$$
\hat u_i\to 1 \text{ uniformly on compacts }.
$$
This shows that $\Lambda_\WH(G) \leq C$, and the proof is complete.
\end{proof}

In view of Example~\ref{exam:proper-cocycle} (a) we get the following corollary.
\begin{cor}
Let $G$ be a locally compact, second countable group with a closed subgroup $H$ such that $G/H$ admits a $G$-invariant probability measure. Then $G$ is weakly Haagerup if and only if $H$ is weakly Haagerup. More precisely, $\Lambda_{\WH}(G) = \Lambda_{\WH}(H)$.
\end{cor}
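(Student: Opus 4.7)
The plan is to observe that both inequalities $\Lambda_\WH(H) \leq \Lambda_\WH(G)$ and $\Lambda_\WH(G) \leq \Lambda_\WH(H)$ follow immediately from results established earlier in the section, so the corollary is essentially a packaging statement.

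For the inequality $\Lambda_\WH(H) \leq \Lambda_\WH(G)$, I would simply cite Proposition~\ref{prop:subgroup}, since $H$ is by hypothesis a closed subgroup of $G$. This direction does not use the existence of the invariant probability measure at all.

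For the reverse inequality $\Lambda_\WH(G) \leq \Lambda_\WH(H)$, I would invoke Proposition~\ref{prop:proper-cocycle} applied to the standard Borel $G$-space $X = G/H$ equipped with the given $G$-invariant probability measure $\mu$. To verify the hypotheses, I need to exhibit a proper Borel cocycle $\alpha \colon G \times X \to H$, and this is provided by Example~\ref{exam:proper-cocycle}(a): using a regular Borel cross section $\sigma \colon G/H \to G$ (which exists by \cite{MR0044536} since $G$ is second countable), define
$$
\alpha(g, x) = \sigma(gx)^{-1} g \sigma(x).
$$
The example has already verified the three conditions of Definition~\ref{defi:proper-cocycle} for this $\alpha$, so the hypotheses of Proposition~\ref{prop:proper-cocycle} are met and we obtain $\Lambda_\WH(G) \leq \Lambda_\WH(H)$.

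Combining the two inequalities gives $\Lambda_\WH(G) = \Lambda_\WH(H)$, and in particular $G$ is weakly Haagerup if and only if $H$ is. There is no real obstacle here, since all the substantive work (constructing the induction map $u \mapsto \hat u$, checking it preserves membership in $B_2 \cap C_0$, and checking continuity for locally uniform convergence) was carried out in Lemmas~\ref{lem:inducing-properties} and \ref{lem:inducing-continuous} and packaged into Proposition~\ref{prop:proper-cocycle}. The only minor point to verify is second countability of $H$, which holds automatically since $H$ is a closed subgroup of the second countable group $G$.
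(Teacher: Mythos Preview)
Your proof is correct and follows exactly the same approach as the paper: one inequality from Proposition~\ref{prop:subgroup}, the other from Proposition~\ref{prop:proper-cocycle} via the proper cocycle of Example~\ref{exam:proper-cocycle}(a). The paper's proof is essentially your two-line summary, and your additional remarks (the explicit form of $\alpha$ and the observation that $H$ inherits second countability) are harmless elaborations.
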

\begin{proof}
From Proposition~\ref{prop:subgroup} we know that $\Lambda_\WH(H)\leq \Lambda_\WH(G)$. The other inequality follows from Proposition~\ref{prop:proper-cocycle} in view of Example~\ref{exam:proper-cocycle}.
\end{proof}

\begin{cor}\label{cor:lattice}
Let $G$ be a locally compact, second countable group with a lattice $\Gamma \subseteq G$. Then $G$ is weakly Haagerup if and only if $\Gamma$ is weakly Haagerup. More precisely, $\Lambda_{\WH}(G) = \Lambda_{\WH}(\Gamma)$.
\end{cor}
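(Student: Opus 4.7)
The plan is to combine the two main tools of the section: Proposition~\ref{prop:subgroup} (which monotonically passes $\Lambda_\WH$ down to closed subgroups) and Proposition~\ref{prop:proper-cocycle} (which pushes $\Lambda_\WH$ up from $H$ to $G$ whenever a proper Borel cocycle $G\times X\to H$ exists with $(X,\mu)$ a standard $G$-space of invariant probability). A lattice is set up precisely so that both tools apply.

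For the inequality $\Lambda_\WH(\Gamma)\leq \Lambda_\WH(G)$: by definition a lattice is a discrete subgroup, hence closed in $G$, and Proposition~\ref{prop:subgroup} gives the bound directly. For the reverse inequality $\Lambda_\WH(G)\leq \Lambda_\WH(\Gamma)$, I would exhibit exactly the data needed by Proposition~\ref{prop:proper-cocycle}. Take $X = G/\Gamma$ with the $G$-invariant probability measure $\mu$ whose existence is built into the definition of a lattice; since $G$ is locally compact and second countable, $X$ is a standard Borel $G$-space. Choose a regular Borel cross section $\sigma: G/\Gamma \to G$ of the quotient map $p:G\to G/\Gamma$ (existence guaranteed by \cite[Lemma~1.1]{MR0044536}), and define
$$
\alpha : G \times X \to \Gamma, \qquad \alpha(g,x) = \sigma(gx)^{-1} g\, \sigma(x).
$$
Then $\alpha$ is a Borel cocycle, and its properness in the sense of Definition~\ref{defi:proper-cocycle} is exactly what was verified in Example~\ref{exam:proper-cocycle}(a) (taking $\mathcal{A}$ to be the family of compact subsets of $X$).

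Applying Proposition~\ref{prop:proper-cocycle} to this cocycle then yields $\Lambda_\WH(G)\leq \Lambda_\WH(\Gamma)$, and combining with the first inequality gives the desired equality. There is essentially no obstacle here, as all substantive work (properness of the cocycle from a cross section, and the induction $B_2(\Gamma)\to B_2(G)$ preserving norm, local uniform convergence, and vanishing at infinity) has already been carried out in the preceding lemmas and example; the corollary is just the packaging step that specializes Proposition~\ref{prop:proper-cocycle} to the lattice case singled out in Example~\ref{exam:proper-cocycle}(c).
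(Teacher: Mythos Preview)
Your proof is correct and follows exactly the paper's approach: the paper states Corollary~\ref{cor:lattice} as an immediate special case of the preceding corollary (which combines Proposition~\ref{prop:subgroup} with Proposition~\ref{prop:proper-cocycle} via Example~\ref{exam:proper-cocycle}), and your write-up simply unpacks that specialization explicitly.
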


Inspired by the proof of Proposition~\ref{prop:proper-cocycle} we now set out to prove that the weak Haagerup property can be lifted from a co-F{\o}lner subgroup to the whole group. In particular, extensions of amenable groups by weakly Haagerup groups yield weakly Haagerup groups.

Recall that a closed subgroup $H$ in a locally compact group $G$ is \emph{co-F{\o}lner} if there is a $G$-invariant Borel measure $\mu$ on the coset space $G/H$ and if for each $\epsilon > 0$ and compact set $L\subseteq G$ there is a compact set $F \subseteq G/H$ such that $0 < \mu(F) < \infty$ and
$$
\frac{\mu(gF \triangle F)}{\mu(F)} < \epsilon \quad\text{for all } g\in L.
$$
Here $\triangle$ denotes symmetric difference of sets. The most natural examples of co-F{\o}lner subgroups are closed normal subgroups with amenable quotients. Indeed, it follows from the F{\o}lner characterization of amenability (see \cite[Theorem~7.3]{MR767264} and \cite[Proposition~7.4]{MR767264}) that such groups are co-F{\o}lner.

\begin{prop}
Let $G$ be a locally compact group with a closed subgroup $H$. Assume that $G$ is either second countable or discrete. If $H$ is weakly Haagerup and co-F{\o}lner, then $G$ is weakly Haagerup. More precisely, $\Lambda_\WH(G) = \Lambda_\WH(H)$.
\end{prop}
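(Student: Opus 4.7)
One direction, $\Lambda_\WH(H) \leq \Lambda_\WH(G)$, is already contained in Proposition~\ref{prop:subgroup}. For the reverse, I would adapt the cocycle-averaging argument that produced Proposition~\ref{prop:proper-cocycle}, replacing the $G$-invariant probability measure there by a window $F$ inside an invariant (but infinite) measure, with Følner approximate invariance compensating for the loss of total mass.

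Concretely: given a compact $L \subseteq G$ and $\epsilon > 0$, I would fix a regular Borel cross-section $\sigma:G/H \to G$ (by Mackey's theorem when $G$ is second countable, and trivially in the discrete case) and form the Borel cocycle $\alpha(g,x) = \sigma(gx)^{-1} g \sigma(x)$. By the co-Følner hypothesis there is a compact $F \subseteq G/H$ with $0 < \mu(F) < \infty$ and $\mu(kF \triangle F)/\mu(F) < \epsilon$ for all $k \in L \cup L^{-1}$. Regularity of $\sigma$ makes $K := \overline{\alpha(L \times F)} = \overline{\sigma(LF)^{-1} L \sigma(F)} \subseteq H$ compact, so Proposition~\ref{prop:equi-def} yields $w \in B_2(H) \cap C_0(H)$ with $\|w\|_{B_2} \leq \Lambda_\WH(H) + \epsilon$ and $w \equiv 1$ on $K$. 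I would then define the candidate multiplier
$$
\tilde u(k) \;=\; \frac{1}{\mu(F)}\int_{F \cap k^{-1}F} w(\alpha(k,x)) \, d\mu(x), \qquad k \in G.
$$

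To bound $\|\tilde u\|_{B_2}$, use the Gilbert decomposition $w(b^{-1}a) = \langle P(a), Q(b)\rangle$ from Proposition~\ref{prop:Gilbert}(5) and set
$$
\tilde P(g)(y) = \mathbf{1}_{gF}(y)\,P(\alpha(g, g^{-1}y))/\sqrt{\mu(F)}, \qquad \tilde Q(h)(y) = \mathbf{1}_{hF}(y)\,Q(\alpha(h, h^{-1}y))/\sqrt{\mu(F)},
$$
as Borel maps $G \to L^2(G/H, \mathcal H_0, \mu)$. The change of variable $x = g^{-1}y$, legal by $G$-invariance of $\mu$, gives $\|\tilde P(g)\| \leq \|P\|_\infty$ and $\|\tilde Q(h)\| \leq \|Q\|_\infty$. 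The cocycle identity rewrites
$\alpha(h, h^{-1}y)^{-1}\alpha(g, g^{-1}y) = \alpha(h^{-1}g, g^{-1}y)$, and a second change of variable (again using invariance) shows
$\langle \tilde P(g), \tilde Q(h)\rangle = \tilde u(h^{-1}g)$,
so by Proposition~\ref{prop:Gilbert}(5) one has $\tilde u \in B_2(G)$ with $\|\tilde u\|_{B_2} \leq \|P\|_\infty \|Q\|_\infty \leq \|w\|_{B_2}$. Approximation on $L$ is immediate: for $k \in L$ and $x \in F \cap k^{-1}F \subseteq F$ one has $\alpha(k,x) \in K$, so $w(\alpha(k,x)) = 1$ and $\tilde u(k) = \mu(F \cap k^{-1}F)/\mu(F) \geq 1 - \epsilon$ by Følner. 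For $\tilde u \in C_0(G)$, note that the integration domain is nonempty only when $k \in \overline{\sigma(F)}\, H\, \overline{\sigma(F)}^{-1}$; writing such a $k$ in this form and using that $\overline{\sigma(F)}$ is compact, a short estimate shows that if $k$ leaves a sufficiently large compact of $G$, then $\alpha(k,x)$ leaves any prescribed compact of $H$ uniformly in $x \in F \cap k^{-1}F$, whence $w \in C_0(H)$ forces $\tilde u(k) \to 0$.

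The main obstacle is the Herz-Schur norm bound in step three; everything else is a fairly routine adaptation of familiar co-Følner tricks. The delicate point is that the naïve average $\int_F w(\alpha(k,x))d\mu(x)/\mu(F)$ is \emph{not} in $B_2(G)$ with the right bound, because the cocycle identity does not split the two variables in a Schur-multiplier-friendly way; only by integrating over $F \cap k^{-1}F$ (the natural matrix-coefficient-of-the-induced-representation expression) does the kernel $\langle \tilde P(g), \tilde Q(h)\rangle$ collapse to a function of $h^{-1}g$ alone. The approximate invariance supplied by the Følner condition is used only at the final step, to compare $\mu(F \cap k^{-1}F)$ with $\mu(F)$ for $k \in L$.
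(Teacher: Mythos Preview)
Your argument is correct and follows essentially the same route as the paper's own proof: the same cocycle $\alpha$, the same averaged multiplier $\tilde u(k) = \mu(F)^{-1}\int_{F\cap k^{-1}F} w(\alpha(k,x))\,d\mu(x)$, the same Gilbert-type factorization into $\tilde P,\tilde Q\in L^2(G/H,\mc H)$ to control $\|\tilde u\|_{B_2}$, and the same $C_0$-estimate via the inclusion $\alpha(k,x)\in\sigma(F)^{-1}k\,\sigma(F)$ for $x\in F\cap k^{-1}F$. The only cosmetic differences are that you parametrize $\tilde P,\tilde Q$ by $y=gx$ rather than by $x$, and that you use Proposition~\ref{prop:equi-def} to arrange $w\equiv 1$ on $K$ (at the cost of $\|w\|_{B_2}\leq \Lambda_\WH(H)+\epsilon$), whereas the paper takes $|u-1|\leq\epsilon$ on $K$ with $\|u\|_{B_2}\leq\Lambda_\WH(H)$; either choice works.
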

\begin{proof}
Let $C = \Lambda_\WH(H)$. We already know from Proposition~\ref{prop:subgroup} that $\Lambda_\WH(G) \geq C$, so it suffices to prove the other inequality. For this it is enough prove that for each compact $L\subseteq G$ and $\epsilon > 0$ there is $v\in B_2(G) \cap C_0(G)$ with $\|v\|_{B_2} \leq C$ such that
$$
|v(g) - 1| \leq 2\epsilon \quad\text{for all } g\in L.
$$

Thus, suppose that $L\subseteq G$ is compact and $\epsilon > 0$. Let $\sigma: G/H\to G$ be a regular Borel cross section. If $G$ is discrete the existence of $\sigma$ is trivial, and if $G$ is second countable then the existence of $\sigma$ is a standard result (see \cite[Lemma~1.1]{MR0044536}). Define the corresponding cocycle $\alpha : G\times G/H \to H$ by
$$
\alpha(g,x) = \sigma(gx)^{-1}g\sigma(x) \quad\text{for all } g\in G,\ x\in G/H.
$$
Choose an invariant Borel measure $\mu$ on $G/H$ and a compact set $F\subseteq G/H$ such that $0 < \mu(F) < \infty$ and
$$
\frac{\mu(g F \triangle F)}{\mu(F)} < \epsilon \quad\text{for all } g\in L.
$$
By regularity of $\sigma$, the set $K = \overline{\alpha(L\times F)}$ is compact, because
$$
\alpha(L\times F) \subseteq \sigma(LF)^{-1} L \sigma(F).
$$
Since $\Lambda_\WH(H) \leq C$ there is a Herz-Schur multiplier $u \in B_2(H)\cap C_0(H)$ such that $\|u\|_{B_2}\leq C$ and
$$
|u(h) - 1| \leq \epsilon \quad\text{for all } h\in K.
$$
Define $v: G\to \C$ by
$$
v(g) = \frac{1}{\mu(F)} \int_{G/H} 1_{F\cap g^{-1} F} (x) u(\alpha(g,x)) \ d\mu(x).
$$
We claim that $v$ has the desired properties. First we check that $v\in B_2(G)$ with $\|v\|_{B_2} \leq C$. Since $u\in B_2(H)$ there are a Hilbert space $\mc H$ and bounded, continuous maps $P,Q: H\to \mc H$ such that
$$
u(ab^{-1}) = \la P(a),Q(b)\ra \quad\text{ for all } a,b\in H.
$$
If $G$ is second countable, then so is $H$ and we can (and will) assume that $\mc H$ is separable. Consider the Hilbert space $L^2(G/H,\mc H)$, and define Borel maps $\tilde P,\tilde Q:G\to L^2(G/H,\mc H)$ by
\begin{align*}
\tilde P(g)(x) &= \frac{1}{\mu(F)^{1/2}} 1_{g^{-1}F}(x) P(\alpha(g,x)) \\
\tilde Q(g)(x) &= \frac{1}{\mu(F)^{1/2}} 1_{g^{-1}F}(x) Q(\alpha(g,x))
\end{align*}
for all $g\in G$, $x\in G/H$. We note that $\|\tilde P(g)\|_2 \leq \|P\|_\infty$ and $\|\tilde Q(g)\|_2 \leq \|Q\|_\infty$ for every $g\in G$. Using the cocycle identity and the invariance of $\mu$ under the action of $G$, we find that
\begin{align*}
\la \tilde P(g), \tilde Q(h)\ra
&= \frac{1}{\mu(F)} \int 1_{g^{-1}F \cap h^{-1}F}(x) \ \la  P(\alpha(g,x)) , Q(\alpha(h,x)) \ra \ d\mu(x) \\
&= \frac{1}{\mu(F)} \int 1_{g^{-1}F \cap h^{-1}F}(x) \ u(\alpha(g,x) \alpha(h,x)^{-1} ) \ d\mu(x) \\
&= \frac{1}{\mu(F)} \int 1_{g^{-1}F \cap h^{-1}F}(x) \ u(\alpha(gh^{-1},hx)) \ d\mu(x)  \\
&= \frac{1}{\mu(F)} \int 1_{F\cap (gh^{-1})^{-1}F }(x) \ u(\alpha(gh^{-1},x)) \ d\mu(x)  \\
&= v(gh^{-1}).
\end{align*}
Thus, $v\in B_2(G)$ by Proposition~\ref{prop:Gilbert} and $\|v\|_{B_2}\leq \|u\|_{B_2} \leq C$.

To see that $v\in C_0(G)$ we let $\delta > 0$ be given. Since $u\in C_0(H)$ there is a compact set $M\subseteq H$ such that $h\notin M$ implies $|u(h)| \leq \delta$.

If $x\in G/H$ and $g\in G$ is such that $x\in F\cap g^{-1} F$ and $\alpha(g,x) \in M$, then $g\in \sigma(F)M\sigma(F)^{-1}$, which is pre-compact since $\sigma$ is regular. Then it is not hard to see that if $g\notin \sigma(F)M\sigma(F)^{-1}$ then
$$
|v(g)| \leq \frac{1}{\mu(F)} \int_{F\cap g^{-1} F} |u(\alpha(g,x))| \ d\mu(x) \leq \delta.
$$
This proves that $v\in C_0(G)$.

Finally, suppose $g\in L$. We show that $|v(g) - 1| \leq 2\epsilon$. If $x\in F$, then $\alpha(g,x) \in K$ and $|u(\alpha(g,x)) - 1| \leq \epsilon$. Hence
\begin{align*}
|v(g) - 1 |
&= \frac{1}{\mu(F)} \left| \int 1_{F\cap g^{-1}F}(x) u(\alpha(g,x)) - 1_{F\cap g^{-1}F}(x)   - 1_{F\setminus g^{-1}F}(x) \ d\mu(x) \right|\\
&\leq  \frac{1}{\mu(F)} \int 1_{F\cap g^{-1}F}(x) | u(\alpha(g,x)) - 1 |  + 1_{F\setminus g^{-1}F}(x) \ d\mu(x) \\
&\leq \epsilon + \frac{\mu(F\setminus g^{-1}F)}{\mu(F)} \\
&\leq 2\epsilon.
\end{align*}
\end{proof}

\begin{cor}
Let $N$ be a closed normal subgroup in a locally compact group $G$. Assume that $G$ is either second countable or discrete. If $N$ has the weak Haagerup property and $G/N$ is amenable, then $G$ has the weak Haagerup property. In fact, $\Lambda_\WH(G) = \Lambda_\WH(N)$.
\end{cor}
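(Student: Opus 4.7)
The plan is to deduce this corollary directly from the preceding proposition on co-F\o lner subgroups. The only thing that really needs to be checked is that a closed normal subgroup $N\triangleleft G$ with amenable quotient is automatically co-F\o lner in $G$; once this is established, the proposition immediately yields $\Lambda_\WH(G)=\Lambda_\WH(N)$.

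First I would recall that since $G/N$ is a locally compact group, the coset space $G/N$ carries a (left) $G$-invariant Borel measure $\mu$, namely the Haar measure of the quotient group. Next, because $G/N$ is amenable, the F\o lner characterization of amenability (\cite[Thm.~7.3 and Prop.~7.4]{MR767264}, referenced in the paragraph just before the proposition) produces, for any compact $L\subseteq G$ and any $\epsilon>0$, a compact subset $\bar F\subseteq G/N$ with $0<\mu(\bar F)<\infty$ and
$$
\frac{\mu(\bar g\bar F \triangle \bar F)}{\mu(\bar F)}<\epsilon\qquad\text{for all } \bar g\in q(L),
$$
where $q:G\to G/N$ is the quotient map. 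Taking $F=\bar F$ as a compact subset of the coset space $G/N$ (which is the same underlying set as $G/N$) and observing that the action of $g\in G$ on $G/N$ factors through $q(g)$, this exactly says that $N$ satisfies the co-F\o lner condition in $G$ with witnessing sets $F$ and invariant measure $\mu$.

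With co-F\o lnerness of $N$ in hand, the previous proposition applies verbatim and gives $\Lambda_\WH(G)=\Lambda_\WH(N)$, which is the stated conclusion. There is no real obstacle here; the only minor point to be careful about is that the $G$-action on $G/N$ in the sense of the co-F\o lner definition agrees with left multiplication in the quotient group $G/N$, so that the F\o lner sets for the amenable group $G/N$ translate directly into co-F\o lner sets for $N\leq G$. The hypothesis that $G$ is second countable or discrete is already present to invoke the preceding proposition and requires no extra work.
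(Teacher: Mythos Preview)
Your proposal is correct and matches the paper's approach exactly: the paper notes (in the paragraph preceding the proposition) that closed normal subgroups with amenable quotient are co-F\o lner via the F\o lner characterization of amenability, and then the corollary follows immediately from the proposition. Your verification that the $G$-action on $G/N$ agrees with left translation in the quotient group is precisely the observation needed to make this deduction rigorous.
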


\section{The weak Haagerup property for simple Lie groups}

This section contains results from \cite{HK-WH-examples} about the weak Haagerup property for connected simple Lie groups. The results are merely included here for completeness. The results are consequences of some of the hereditary properties proved here in Section~\ref{sec:hereditary-1} combined with work of de~Laat and Haagerup \cite{MR3047470}, \cite{delaat2}. But before we mention the results, we summarize the situation concerning connected simple Lie groups, the Haagerup property and weak amenability.

Since compact groups are amenable, they also posses the Haagerup property, and they are weakly amenable. So only the non-compact case is of interest. It is known which connected simple Lie groups have the Haagerup property (see \cite[p.~12]{MR1852148}). We summarize the result.
\begin{thm}[\cite{{MR1852148}}]
Let $G$ be a non-compact connected simple Lie group. Then $G$ has the Haagerup property if and only if $G$ is locally isomorphic to either $\SO_0(1,n)$ or $\SU(1,n)$. Otherwise, $G$ has property (T).
\end{thm}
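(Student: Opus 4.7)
The proof would proceed by invoking the structure theory of connected simple Lie groups and splitting into cases by real rank. The strategy on the Haagerup side is to produce a proper affine isometric action on a Hilbert space (equivalently, a proper continuous conditionally negative definite function), and on the property (T) side to cite the classical results of Kazhdan and Kostant. Since property (T) and the Haagerup property are compatible only for compact groups, and we are assuming $G$ is non-compact, it suffices to place $G$ in exactly one bucket.

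\emph{Step 1 (higher rank).} If the real rank of $G$ is at least $2$, then by Kazhdan's theorem (or by the fact that $G$ contains a copy of $\SL_3(\R)$ or $\Sp_4(\R)$ as a closed subgroup, up to covers) $G$ has property (T). \emph{Step 2 (classification of rank one).} By the Cartan classification, the non-compact connected simple Lie groups of real rank one are, up to local isomorphism,
$$
\SO_0(1,n)\ (n\geq 2),\qquad \SU(1,n)\ (n\geq 2),\qquad \Sp(1,n)\ (n\geq 2),\qquad F_{4(-20)}.
$$
\emph{Step 3 (rank-one property (T) cases).} By Kostant's theorem, $\Sp(1,n)$ for $n\geq 2$ and $F_{4(-20)}$ have property (T); the point is that although these are rank one, the restricted root system is such that the spherical trivial representation is isolated in the unitary dual.

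\emph{Step 4 (rank-one Haagerup cases).} For $G = \SO_0(1,n)$ or $G = \SU(1,n)$ one exhibits a proper isometric action on a CAT$(-1)$ symmetric space: real hyperbolic $n$-space $\mathbb{H}^n_{\mathbb{R}}$ in the first case, complex hyperbolic $n$-space $\mathbb{H}^n_{\mathbb{C}}$ in the second. A classical computation of Faraut--Harzallah shows that the distance kernel $(x,y)\mapsto d(x,y)$ on these spaces is conditionally negative definite. Fixing a basepoint $o$ and setting $\psi(g) = d(g\cdot o, o)$ then produces a continuous, proper, conditionally negative definite function on $G$, so by Schoenberg's theorem the functions $e^{-t\psi}$ furnish a Haagerup approximation of $1$ by $C_0$ positive definite functions.

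The main obstacle is Step 4, specifically verifying that the hyperbolic distance is conditionally negative definite; everything else amounts to quoting classification, Kazhdan, and Kostant. One slick alternative for this step is to realize the unit ball models of $\mathbb{H}^n_{\mathbb{R}}$ and $\mathbb{H}^n_{\mathbb{C}}$ inside a Bergman-type Hilbert space and obtain the cocycle directly; this bypasses spherical function computations and gives the explicit affine isometric action whose translation length is precisely $\psi$.
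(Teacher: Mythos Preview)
The paper does not actually prove this theorem; it is quoted from \cite{MR1852148} as a known classification and placed in Section~6 purely for context, with no argument given beyond the citation. So there is no ``paper's own proof'' to compare against.

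That said, your outline is the standard route taken in the literature and is essentially correct: Kazhdan for real rank $\geq 2$, Kostant for $\Sp(1,n)$ and $F_{4(-20)}$, and Faraut--Harzallah's conditional negative definiteness of the hyperbolic distance (together with Schoenberg) for the $\SO_0(1,n)$ and $\SU(1,n)$ cases. One small remark: in Step~2 you should allow $\SO_0(1,n)$ with $n\geq 1$ and $\SU(1,n)$ with $n\geq 1$ (the case $n=1$ giving $\PSL_2(\R)$ up to local isomorphism), and you might also want to note explicitly that the Haagerup property and property~(T) are preserved under passage to finite covers and quotients by the center, so that the ``up to local isomorphism'' reduction is legitimate. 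With those cosmetic adjustments your sketch matches the argument in the cited reference.
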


Concerning weak amenability the situation is more subtle, if one wants to include the weak amenability constant, but still the full answer is known.
\begin{thm}[\cite{MR748862},\cite{MR996553},\cite{MR784292},\cite{MR1418350},\cite{UH-preprint},\cite{MR1079871}]\label{thm:WA-Lie-rank-one}
Let $G$ be a non-compact connected simple Lie group. Then
$$
\Lambda_{\WA}(G) = \begin{cases}
1 & \text{ for } G\approx \SO(1,n) \\
1 & \text{ for } G\approx\SU(1,n) \\
2n-1 & \text{ for } G\approx\Sp(1,n) \\
21 & \text{ for } G\approx \mr F_{4(-20)}. \\
\infty & \text{ otherwise }. \\
\end{cases}
$$
\end{thm}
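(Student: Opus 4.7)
The plan is to reduce to the Cartan classification of non-compact connected simple real Lie algebras and handle the real-rank-one and real-rank-$\geq 2$ cases separately. First I would observe that $\Lambda_{\WA}$ is invariant under local isomorphism: two connected simple Lie groups with the same Lie algebra differ by a finite covering whose kernel is a discrete (hence, for connected simple Lie groups, finite) central subgroup, so the weak-amenability analogue of Proposition~\ref{prop:modK} (which appears as Proposition~1.3 in \cite{MR996553}) shows they have the same constant. Hence it suffices to fix a convenient representative in each local isomorphism class. The classification then says that every non-compact connected simple Lie group is locally isomorphic to one of $\SO(1,n)$, $\SU(1,n)$, $\Sp(1,n)$, $\mr F_{4(-20)}$, or to a simple Lie group of real rank $\geq 2$.

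For the real-rank-one cases I would appeal to the explicit constructions in the literature. Radial multipliers $\phi_t$ decaying like $e^{-t|g|}$ (where $|g|$ is measured in the associated symmetric space) were shown to be Herz--Schur multipliers of norm tending to $1$ on $\SO(1,n)$ and $\SU(1,n)$ by de~Canni\`ere--Haagerup \cite{MR784292} and Cowling \cite{MR748862}, giving $\Lambda_{\WA} = 1$. The case $\Sp(1,n)$, where the sharp constant $2n-1$ emerges, is the central result of Cowling--Haagerup \cite{MR996553}: the upper bound comes from explicit radial multipliers built from Jacobi analysis on the quaternionic hyperbolic space, while the matching lower bound is obtained by $K$-bi-invariant averaging (legitimate by Proposition~\ref{prop:equi-def}) followed by an analysis of the decay of positive definite spherical functions at infinity. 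The case $\mr F_{4(-20)}$ with constant $21$ is proved by analogous (and considerably more delicate) octonionic hyperbolic harmonic analysis in \cite{UH-preprint}.

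For real rank $\geq 2$ the strategy is to exhibit a closed subgroup that is already non-weakly-amenable and then invoke the weak amenability analogue of Proposition~\ref{prop:subgroup}. Haagerup \cite{MR1079871} proved that $\SL(3,\R)$ is not weakly amenable, and Dorofaeff \cite{MR1418350} extended this to $\Sp(2,\R)$ and showed moreover that every connected simple real Lie group of real rank at least two contains a closed subgroup locally isomorphic to one of these two groups (via an $\SL(3,\R)$ or $\Sp(2,\R)$ subsystem of the restricted root system). Combining these facts with the subgroup hereditary property yields $\Lambda_{\WA}(G) = \infty$ in all remaining cases.

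The main obstacle is by far the determination of the sharp constants $2n-1$ and $21$ for $\Sp(1,n)$ and $\mr F_{4(-20)}$. Both directions -- constructing the multipliers with the correct norm and matching them with a lower bound -- rely on intricate spherical analysis on the relevant rank-one symmetric spaces, and the lower bound in particular does not follow from any abstract argument but requires careful estimates on spherical functions and on the decay behaviour of $K$-bi-invariant positive definite functions. By contrast, the real-rank-$\geq 2$ half of the theorem is relatively formal once Dorofaeff's embedding result is granted, and the real-rank-one groups of constant $1$ follow from direct (if nontrivial) construction of approximating sequences.
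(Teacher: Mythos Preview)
The paper does not give a proof of this theorem; it is stated as a result from the literature with citations to the original sources, so there is no in-paper argument to compare against. Your outline is a fair summary of how the result is assembled from those references.

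Two attribution corrections are worth noting. First, the non-weak-amenability of $\SL(3,\R)$ (and $\Sp(2,\R)$) is due to Haagerup's unpublished manuscript \cite{UH-preprint}, not to \cite{MR1079871}; the latter is Hansen's paper on the universal covering group of $\SU(1,n)$. Hansen's paper is cited because the universal cover of $\SU(1,n)$ has infinite center, so your reduction ``two locally isomorphic connected simple Lie groups differ by a finite central quotient'' fails there, and a separate argument is needed. Second, the sharp constant $21$ for $\mr F_{4(-20)}$ is established in Cowling--Haagerup \cite{MR996553} alongside the $\Sp(1,n)$ case, not in \cite{UH-preprint}.
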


Here $\approx$ denotes local isomorphism. We remark that in the above situation $\Lambda_\WA(G) = 1$ in exactly the same cases as where $G$ has the Haagerup property.

If the only concern is whether or not $\Lambda_\WA(G) < \infty$, i.e., whether or not $G$ is weakly amenable, then the result can be rephrased as follows.

\begin{cor}[\cite{MR748862}, \cite{MR996553}, \cite{MR784292}, \cite{UH-preprint},\cite{MR1079871}]
A connected simple Lie group is weakly amenable if and only if it has real rank zero or one.
\end{cor}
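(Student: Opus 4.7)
The plan is to obtain the corollary as a direct consequence of Theorem~\ref{thm:WA-Lie-rank-one}, using the classical classification of real rank one simple Lie groups.

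First I would split the argument by real rank. For a connected simple Lie group $G$, the real rank (the dimension of a maximal $\R$-split torus) is zero if and only if $G$ is compact. Compact groups are amenable and hence weakly amenable with $\Lambda_\WA(G) = 1$, so the rank zero case is trivial in both directions.

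For the non-compact case I would invoke the standard classification: up to local isomorphism, the non-compact connected simple Lie groups of real rank one are precisely $\SO_0(1,n)$ with $n \geq 2$, $\SU(1,n)$ with $n \geq 1$, $\Sp(1,n)$ with $n \geq 1$, and $\mr F_{4(-20)}$. By Theorem~\ref{thm:WA-Lie-rank-one} each of these four families has finite weak amenability constant, so every non-compact connected simple Lie group of real rank one is weakly amenable. Conversely, a non-compact connected simple Lie group of real rank at least two is not locally isomorphic to any group in these four families, and hence falls into the ``otherwise'' clause of Theorem~\ref{thm:WA-Lie-rank-one}, yielding $\Lambda_\WA(G) = \infty$.

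The only subtlety is that weak amenability must be an invariant of the local isomorphism class of a connected simple Lie group, so that the four families above genuinely cover all real rank one groups. This is already built into the $\approx$ notation in Theorem~\ref{thm:WA-Lie-rank-one} and ultimately reduces to the fact that $\Lambda_\WA$ is preserved under passage to quotients by compact normal subgroups (the weak amenability analogue of Proposition~\ref{prop:modK}). Apart from this bookkeeping step there is no real obstacle, since the deep content is already contained in Theorem~\ref{thm:WA-Lie-rank-one} and its cited sources.
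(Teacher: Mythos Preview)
Your proposal is correct and matches the paper's approach: the paper presents this corollary as an immediate rephrasing of Theorem~\ref{thm:WA-Lie-rank-one} (``the result can be rephrased as follows'') and gives no separate proof. Your added bookkeeping---identifying the four families in the theorem as precisely the real rank one groups up to local isomorphism, and handling the compact (rank zero) case separately---is exactly the unpacking that the paper leaves implicit.
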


As mentioned earlier, $\Lambda_\WH(G)\leq \Lambda_\WA(G)$ for every locally compact group $G$, and there are examples to show that the inequality can be strict in the most extreme sense: $\Lambda_\WA(H) = \infty$ and $\Lambda_\WH(H) = 1$, when $H = \Z/2 \wr \F_2$. For connected simple Lie groups, however, it turns out that the weak Haagerup property behaves like weak amenability. The following is proved in \cite{HK-WH-examples} using results of \cite{MR3047470}, \cite{delaat2}.

\begin{thm}[\cite{HK-WH-examples}]
A connected simple Lie group has the weak Haagerup property if and only if it has real rank zero or one.
\end{thm}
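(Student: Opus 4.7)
The ``if'' direction follows quickly from the available machinery. A connected simple Lie group $G$ of real rank zero has compact Lie algebra, so its adjoint group is compact; since compact semisimple Lie groups have finite fundamental group, $G$ itself is compact, hence amenable, hence weakly Haagerup with constant $1$. For real rank one, $G$ is locally isomorphic to one of $\SO_0(1,n)$, $\SU(1,n)$, $\Sp(1,n)$, $F_{4(-20)}$, each of which is weakly amenable by Theorem~\ref{thm:WA-Lie-rank-one}, and the trivial estimate $\Lambda_\WH(G) \leq \Lambda_\WA(G)$ yields the weak Haagerup property. Invariance of the weak Haagerup property across local isomorphism classes in the rank-one case can be read off from Proposition~\ref{prop:modK} whenever the centre is finite, and combined with the hereditary results of Theorem~A (in particular the lattice and co-F{\o}lner statements) to cover the infinite-centre case via universal covers.

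For the ``only if'' direction, the plan is a purely Lie-theoretic structural reduction followed by the analysis of two minimal cases. The reduction: the restricted root system of any connected simple Lie group of real rank at least two contains a rank-two subsystem of type $A_2$ or $C_2$ (the types $B_2$, $BC_2$, $G_2$ and higher rank systems all contain one of these as a subsystem). The connected Lie subgroup of $G$ spanned by the corresponding real root spaces together with a Cartan subalgebra is closed in $G$ and is locally isomorphic to $\SL_3(\R)$ (in the $A_2$ case) or to $\Sp(2,\R)$ (in the $C_2$ case). Applying Proposition~\ref{prop:subgroup} — combined with Proposition~\ref{prop:modK} to discard finite central kernels arising from the local isomorphism — the problem reduces to proving that neither $\SL_3(\R)$ nor $\Sp(2,\R)$ has the weak Haagerup property.

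This final step is the main obstacle and is precisely the weak Haagerup analogue of the AP results proved in \cite{MR3047470} and \cite{delaat2}. My plan for it: use Proposition~\ref{prop:equi-def} to replace a hypothetical weak-Haagerup net by one consisting of smooth $K$-bi-invariant Herz-Schur multipliers, where $K$ denotes a maximal compact subgroup; via the $KAK$ decomposition this reduces the analysis to Weyl-invariant functions on the positive chamber. Extracting a weak-$*$ cluster point in $B_2(G) = M_0A(G)$ of the restricted radial net produces a single $K$-bi-invariant multiplier with prescribed radial behaviour, and the goal is to derive a contradiction among three constraints: uniform boundedness in Herz-Schur norm, $C_0$-decay in the chamber direction, and locally uniform convergence to $1$. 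The novelty compared with the AP setting is that one must retain the $C_0$-decay through the weak-$*$ extraction (in the AP case only a much weaker boundedness survives), after which an asymptotic spherical/Dorofaeff-style estimate for the Herz-Schur norm of radial functions on the positive chamber — refining the computations of \cite{MR3047470} and \cite{delaat2} — produces the contradiction. This spherical asymptotic analysis, tailored to the $C_0$ condition, is the essential analytic content of the argument.
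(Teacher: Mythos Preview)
The paper does not actually prove this theorem; it is stated twice (once in Section~2 and once in the section on simple Lie groups) but in both places attributed to \cite{HK-WH-examples} and described only as ``a consequence of some of the hereditary properties proved here combined with work of de~Laat and Haagerup \cite{MR3047470}, \cite{delaat2}.'' So there is no detailed argument in the present paper to compare your plan against, only this one-line hint.

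Your overall architecture matches that hint: the ``if'' direction via compactness in rank zero and Theorem~\ref{thm:WA-Lie-rank-one} in rank one is correct, and the reduction of the ``only if'' direction to $\SL_3(\R)$ and $\Sp(2,\R)$ via a rank-two root subsystem, followed by Proposition~\ref{prop:subgroup} and Proposition~\ref{prop:modK}, is the standard route and agrees with the paper's pointer to \cite{MR3047470}, \cite{delaat2}.

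There is, however, a genuine gap in your plan for the terminal step. You propose to take a weak-$*$ cluster point of the (averaged) net $(u_\alpha)$ and then ``retain the $C_0$-decay through the weak-$*$ extraction.'' This cannot work as stated: the net converges to $1$ locally uniformly, hence by Lemma~\ref{lem:A1}(2) also in $\sigma(B_2,Q)$, so any weak-$*$ cluster point is simply $1$, which lies in $B_2(G)$ but certainly not in $C_0(G)$. More generally, $B_2(G)\cap C_0(G)$ is not weak-$*$ closed in $B_2(G)$, so no compactness argument will produce a single $C_0$ multiplier carrying the contradiction. The Haagerup--de~Laat mechanism is different in kind: for $\SL_3(\R)$ and $\Sp(2,\R)$ one proves that every $K$-bi-invariant element of $B_2(G)$ has a limit at infinity along suitable directions in the closed Weyl chamber, with the convergence controlled \emph{quantitatively} by the $B_2$-norm. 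The contradiction then comes from playing this uniform rate of convergence against the two requirements $u_\alpha\in C_0(G)$ (forcing the limit value to be $0$) and $u_\alpha\to 1$ on compacta, applied to a single sufficiently advanced $u_\alpha$ rather than to a weak-$*$ limit. Your sketch should be reorganised around such a uniform limit statement; the weak-$*$ extraction step should be dropped.
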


\section{The weak Haagerup property for von Neumann algebras}
In this section we introduce the weak Haagerup property for finite von Neumann algebras, and we prove that a group von Neumann algebra has this property, if and only if the group has the weak Haagerup property.

In the following, let $M$ be a (finite) von Neumann algebra with a faithful normal trace $\tau$. By a trace we always mean a tracial state. We denote the induced inner product on $M$ by $\la \ ,\ \ra_\tau$. In other words, $\la x,y\ra_\tau = \tau(y^*x)$ for $x,y\in M$. The completion of $M$ with respect to this inner product is a Hilbert space, denoted $L^2(M,\tau)$ or simply $L^2(M)$. The norm on $L^2(M)$ is denoted $\|\ \|_2$ or $\|\ \|_\tau$ and satisfies $\|x\|_2 \leq \|x\|$ for every $x\in M$, where $\|\ \|$ denotes the operator norm on $M$.

When $T:M\to M$ is an bounded operator on $M$, it will be relevant to know sufficient conditions for $T$ to extend to a bounded operator on $L^2(M)$. The following result uses a standard interpolation technique.

\begin{prop}\label{prop:extendable}
Let $(M,\tau)$ be a finite von Neumann algebra with faithful normal trace, and let $S:M\to M$ and $T:M\to M$ be bounded operators on $M$. Suppose $\la Tx,y\ra_\tau = \la x,Sy\ra_\tau$ for every $x,y\in M$. Then $T$ extends to a bounded operator $\tilde T$ on $L^2(M)$, and $\|\tilde T\| \leq \max\{ \|T\|,\|S\|\}$.
\end{prop}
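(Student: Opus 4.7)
The strategy is a standard noncommutative Riesz--Thorin interpolation between the endpoints $L^\infty(M) = M$ and $L^1(M,\tau)$. The $L^\infty$ bound is immediate by hypothesis: $T$ has operator norm $\|T\|$ on $M$. The content lies in establishing that $T$ is bounded on $L^1(M,\tau)$ with norm $\leq \|S\|$, which uses the assumed adjoint relation together with the duality $L^1(M,\tau)^* \cong M$ implemented by the pairing $(a,b) \mapsto \tau(ab)$.

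For the $L^1$ estimate, fix $x\in M$. Since $\la a,y\ra_\tau = \tau(y^*a)$, the $L^1$--$L^\infty$ duality yields
\begin{align*}
\|Tx\|_1 = \sup\{|\la Tx,y\ra_\tau| : y\in M,\ \|y\|\leq 1\}.
\end{align*}
For each such $y$, the hypothesis and the standard inequality $|\tau(ba)|\leq \|b\|\,\|a\|_1$ combine to give
\begin{align*}
|\la Tx,y\ra_\tau| = |\la x, Sy\ra_\tau| = |\tau((Sy)^*x)| \leq \|Sy\|\,\|x\|_1 \leq \|S\|\,\|x\|_1.
\end{align*}
Taking the supremum over $y$, I get $\|Tx\|_1 \leq \|S\|\,\|x\|_1$ for every $x\in M$, so $T$ extends by density to a bounded operator on $L^1(M,\tau)$ of norm at most $\|S\|$.

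Because $M$ is a common $\|\cdot\|_\infty$- and $\|\cdot\|_1$-dense subspace on which both versions of $T$ agree, applying noncommutative Riesz--Thorin at $\theta = 1/2$ produces a bounded extension $\tilde T$ of $T$ to the intermediate space $L^2(M,\tau)$ with
\begin{align*}
\|\tilde T\| \leq \|T\|^{1/2}\,\|S\|^{1/2} \leq \max\{\|T\|,\|S\|\},
\end{align*}
which is the desired bound.

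The main obstacle is really just invoking noncommutative Riesz--Thorin in this finite-trace setting; if one prefers a self-contained argument, the standard move is to apply the Hadamard three-line lemma to the analytic function $z \mapsto \tau\bigl(T(u|x|^{2z})\cdot(v|y|^{2(1-z)})^*\bigr)$, where $x = u|x|$ and $y = v|y|$ are polar decompositions (first for $x,y$ in a suitable dense subspace and then passing to the limit), using the already-proved $L^\infty$ and $L^1$ bounds on the two boundary lines $\Re(z) = 0$ and $\Re(z) = 1$.
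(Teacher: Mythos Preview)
Your proposal is correct and follows essentially the same route as the paper: establish the $L^1$ bound $\|Tx\|_1\le\|S\|\,\|x\|_1$ via duality and the adjoint relation, then interpolate between $L^\infty$ and $L^1$. The paper carries out the interpolation by hand with the three-line theorem (reducing first to invertible $x$ so that the analytic family $s\mapsto uh^{2s}$ is well-defined, and using the single-variable function $g(s)=\tau\bigl(T(uh^{2s})\,T(uh^{2(1-\bar s)})^*\bigr)$, which gives $\|Tx\|_2^2$ directly at $s=1/2$), whereas you invoke noncommutative Riesz--Thorin as a black box; these are the same argument at different levels of packaging.
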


\begin{proof}
After scaling both $T$ and $S$ with $\max\{1, \|T\|,\|S\|\}^{-1}$, we may assume that $\|S\|\leq 1$ and $\|T\| \leq 1$. By \cite[Theorem 5]{MR0355624} the set of invertible elements in $M$ is norm dense, since $M$ is finite. Hence it suffices to prove that $\|Tx\|_2 \leq \| x \|_2$ for every invertible $x\in M$. We prove first that $\|Tx\|_1 \leq \|x \|_1$, and an interpolation technique will then give the result.

Let $M_1$ denote the unit ball of $M$. Recall that $\|x\|_1 = \tau(|x|) = \sup\{ |\tau(y^*x)| \mid y\in M_1\}$. Hence
$$
\|Tx\|_1 = \sup_{y\in M_1} |\tau(y^*Tx)| = \sup_{y\in M_1} |\tau((Sy)^*x)| \leq \sup_{z\in M_1} |\tau(z^*x)| = \|x\|_1.
$$
Since also $\|Tx\| \leq \|x\|$, it follows by an interpolation argument that $\|Tx\|_2 \leq \|x\|_2$. The interpolation argument goes as follows.

Assume for simplicity that $\|x\|_2 \leq 1$. We will show that $\|Tx\|_2 \leq 1$. Since $x$ is invertible, it has polar decomposition $x = uh$, where $u$ is unitary, and $h \geq 0$ is invertible. For $s\in\C$ define
$$
F(s) = uh^{2s}, \quad G(s) = T(F(s)), \quad g(s) = \tau(G(s)G(1-\bar s)^*).
$$
Since $h$ is positive and invertible, $F$ is well-defined and analytic. It follows that $G$ and $g$ are analytic as well.

Next we show that $g$ is bounded on the vertical strip $\Omega = \{s\in\C \mid 0 \leq \Re(s) \leq 1 \}$. Since $\tau$ and $T$ are bounded, it suffices to see that $F$ is bounded on $\Omega$. We have
$$
\|F(s)\| = \|uh^{2s}\| \leq \| h^{2\Re(s)} \| \leq \sup_{0\leq t\leq 1} \|h^{2t}\| < \infty.
$$
Observe that if $v$ and $w$ are unitaries in $M$, and $w$ commutes with $y\in M$, then $|vyw| = |y|$, and hence $\|vyw\|_1 = \|y\|_1$. On the boundary of $\Omega$ we have the following estimates.
$$
\|G(it)\| = \| T(uh^{2it})\| \leq 1,
$$
since $\|T\|\leq 1$, and $u$ and $h^{2it}$ are unitaries. Also
$$
\|G(1+it)\|_1 = \| T(uh^2h^{2it}) \|_1 \leq \|uh^2h^{2it}\|_1 = \|h^2\|_1 = \|x\|_2^2 \leq 1,
$$
It follows that
$$
|g(it)| = |\tau(G(it)G(1+it)^*)| \leq \|G(it)\| \ \|G(1+it)\|_1 \leq 1
$$
and
$$
\|g(1+it)\| = |\tau(G(1+it)G(it)^*)| \leq \|G(it)\| \ \|G(1+it)\|_1 \leq 1.
$$
In conclusion, $g$ is an entire function, bounded on the strip $\Omega$ and bounded by $1$ on the boundary of $\Omega$. It follows from the Three Lines Theorem that $|g(s)| \leq 1$ whenever $s\in\Omega$.

Finally, observe that $g(\frac12) = \tau(Tx(Tx)^*) = \|Tx\|_2^2$. This proves $\|Tx\|_2 \leq 1$. Hence $T$ extends to a bounded operator on $L^2(M)$ of norm at most one.
\end{proof}

\begin{defi}\label{defi:WH-VN}
Let $M$ be a von Neumann algebra with a faithful normal trace $\tau$. Then $(M,\tau)$ has the \emph{weak Haagerup property}, if there is a constant $C > 0$ and a net $(T_\alpha)$ of normal, completely bounded maps on $M$ such that
\begin{enumerate}
	\item $\|T_\alpha\|_\cb \leq C$ for every $\alpha$,
	\item $\la T_\alpha x,y\ra_\tau = \la x,T_\alpha y\ra_\tau$ for every $x,y\in M$,
	\item each $T_\alpha$ extends to a \emph{compact} operator on $L^2(M,\tau)$,
	\item $T_\alpha x \to x$ ultraweakly for every $x\in M$.
\end{enumerate}
The weak Haagerup constant $\Lambda_{\WH}(M,\tau)$ is defined as the infimum of those $C$ for which such a net $(T_\alpha)$ exists, and if no such net exists we write $\Lambda_{\WH}(M,\tau) = \infty$. It is not hard to see that the infimum is actually a minimum and that $\Lambda_\WH(M,\tau) \geq 1$. If $\tau$ is implicit from the context (which will always be the case later on), we simply write $\Lambda_\WH(M)$ for $\Lambda_\WH(M,\tau)$.
\end{defi}

\begin{rem}\label{rem:independent-trace}
The weak Haagerup constant of $M$ is actually independent of the choice of faithful normal trace on $M$, that is, $\Lambda_\WH(M,\tau) = \Lambda_\WH(M,\tau')$ for any two faithful, normal traces $\tau$ and $\tau'$ on $M$. This is Proposition~\ref{prop:independent-trace}.
\end{rem}

\begin{rem}
Note that by Proposition~\ref{prop:extendable}, condition (2) ensures that each $T_\alpha$ extends to a bounded operator on $L^2(M,\tau)$, and the extension is a self-adjoint operator on $L^2(M,\tau)$ with norm at most $\|T_\alpha\|$.
\end{rem}

\begin{rem}\label{rem:topology}
The choice of topology in which the net $(T_\alpha)$ converges to the identity map on $M$ could be one of many without affecting the definition, as we will see now.

Suppose we are given a net $(T_\alpha)$ of normal, completely bounded maps on $M$ such that
\begin{enumerate}
	\item[(1)] $\|T_\alpha\|_\cb \leq C$ for every $\alpha$,
	\item[(2)] $\la T_\alpha x,y\ra_\tau = \la x,T_\alpha y\ra_\tau$ for every $x,y\in M$,
	\item[(3)] each $T_\alpha$ extends to a compact operator on $L^2(M,\tau)$,
	\item[(4)] $T_\alpha \to 1_M$ in the point-weak operator topology.
\end{enumerate}

Since the closure of any convex set in $B(M,M)$ in the point-weak operator topology coincides with its closure in the point-strong operator topology, there is a net $(S_\beta)$ such that $S_\beta\in \conv \{ T_\alpha\}_{\alpha}$ and
\begin{enumerate}
	\item[(1')] $\|S_\beta\|_\cb \leq C$ for every $\beta$,
	\item[(2')] $\la S_\beta x,y\ra_\tau = \la x,S_\beta y\ra_\tau$ for every $x,y\in M$,
	\item[(3')] each $S_\beta$ extends to a compact operator on $L^2(M,\tau)$,
	\item[(4')] $S_\beta \to 1_M$ in the point-strong operator topology.
\end{enumerate}

Since the net $(S_\beta)$ is norm-bounded and the strong operator topology coincides with the trace norm topology on bounded sets of $M$, condition (4') is equivalent to
\begin{enumerate}
	\item[(4'')] $\|S_\beta x - x\|_2 \to 0$ for any $x\in M$.
\end{enumerate}
If we let $\tilde S_\beta$ denote the extension of $S_\beta$ to an operator on $L^2(M)$, then by Proposition~\ref{prop:extendable} $\|\tilde S_\beta\| \leq \| S_\beta\|$, so the net $(\tilde S_\beta)$ is bounded, and hence (4'') is equivalent to the condition that
\begin{enumerate}
	\item[(4''')] $\tilde S_\beta \to 1_{L^2(M)}$ strongly.
\end{enumerate}
Using that $\|y^*\|_2 = \|y\|_2$ for any $y\in M$, condition (4'') implies that
\begin{enumerate}
	\item[(4'''')] $\|(S_\beta x)^* - x^*\|_2 \to 0 \text{ for any } x\in M$
\end{enumerate}
so also, $S_\beta\to 1_M$ in the point-strong$^*$ operator topology. Finally, since the net $(S_\beta)$ is bounded in norm, and since the ultrastrong and strong operator topologies coincide on bounded sets, we also obtain
\begin{enumerate}
	\item[(4''''')] $S_\beta \to 1_M$ in the point-ultrastrong$^*$ operator topology.
\end{enumerate}

\end{rem}

Let us see that the weak Haagerup property is indeed weaker than the (usual) Haagerup property. Let $M$ be a von Neumann algebra with a faithful normal trace $\tau$. We recall (see \cite{MR1372231},\cite{MR1962471}) that $(M,\tau)$ has the Haagerup property if there exists a net $(T_\alpha)_{\alpha\in A}$ of normal completely positive maps from $M$ to itself such that
\begin{enumerate}
	\item $\tau\circ T_\alpha \leq \tau$ for every $\alpha$,
	\item $T_\alpha$ extends to a compact operator on $L^2(M)$,
	\item $\|T_\alpha x - x \|_2 \to 0$ for every $x\in M$.
\end{enumerate}
One can actually assume that $\tau\circ T_\alpha = \tau$ and that $T_\alpha$ is unital (see \cite[Proposition~2.2]{MR1962471}). Moreover, the Haagerup property does not depend on the choice of $\tau$ (see \cite[Proposition~2.4]{MR1962471}).

\begin{prop}
Let $M$ be a von Neumann algebra with a faithful normal trace $\tau$. If $(M,\tau)$ has the Haagerup property, then $(M,\tau)$ has the weak Haagerup property. In fact, $\Lambda_\WH(M,\tau) = 1$.
\end{prop}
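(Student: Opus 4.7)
The plan is to start from the strengthened Haagerup net of Jolissaint and symmetrize it to meet the self-adjointness requirement in Definition~\ref{defi:WH-VN}. By \cite[Proposition~2.2]{MR1962471} we may assume each $T_\alpha$ in the Haagerup witnessing net is unital and satisfies $\tau\circ T_\alpha=\tau$, in addition to being normal completely positive with compact $L^2$-extension $\tilde T_\alpha$ and $\|T_\alpha x-x\|_2\to 0$ for every $x\in M$. Unital complete positivity immediately gives $\|T_\alpha\|_\cb=1$, which is why the weak Haagerup constant will come out to be $1$.

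Next I introduce the $\tau$-adjoint $T_\alpha^\sharp:M\to M$, characterized by $\tau(T_\alpha(x)y)=\tau(xT_\alpha^\sharp(y))$ for all $x,y\in M$. A standard argument (normality of $T_\alpha$ lets one define $T_\alpha^\sharp$ via the preadjoint and the embedding $M\hookrightarrow M_*$, $y\mapsto\tau(\cdot\,y)$; the trace property keeps one inside $M$) shows that $T_\alpha^\sharp$ is again normal, unital, completely positive, and trace-preserving. Moreover, its $L^2$-extension equals $\tilde T_\alpha^*$, which is compact since $\tilde T_\alpha$ is. I then set $S_\alpha=\tfrac12(T_\alpha+T_\alpha^\sharp)$. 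This map is normal, unital, completely positive (hence $\|S_\alpha\|_\cb=1$), $\tau$-preserving, satisfies $\langle S_\alpha x,y\rangle_\tau=\langle x,S_\alpha y\rangle_\tau$ by construction, and its $L^2$-extension $\tilde S_\alpha=\tfrac12(\tilde T_\alpha+\tilde T_\alpha^*)$ is compact.

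It remains to verify $S_\alpha\to 1_M$ in one of the equivalent topologies listed in Remark~\ref{rem:topology}. Using that $T_\alpha$ is $*$-preserving (as a CP map), one computes
$$
\langle T_\alpha^\sharp x,x\rangle_\tau=\tau(x^*T_\alpha^\sharp(x))=\tau(T_\alpha(x^*)x)=\tau(T_\alpha(x)^*x)=\langle x,T_\alpha x\rangle_\tau\longrightarrow\|x\|_2^2,
$$
while Kadison's inequality together with trace preservation yields $\|T_\alpha^\sharp x\|_2\leq\|x\|_2$. Hence
$$
\|T_\alpha^\sharp x-x\|_2^2=\|T_\alpha^\sharp x\|_2^2-2\Re\langle T_\alpha^\sharp x,x\rangle_\tau+\|x\|_2^2\longrightarrow 0,
$$
which combined with $\|T_\alpha x-x\|_2\to 0$ gives $\|S_\alpha x-x\|_2\to 0$. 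By Remark~\ref{rem:topology}, this witnesses $\Lambda_\WH(M,\tau)\leq 1$, and the trivial lower bound $\Lambda_\WH\geq 1$ then forces equality.

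The only subtle point is the construction of $T_\alpha^\sharp$ as a map $M\to M$ (rather than merely on $L^2(M)$) together with its inheritance of normality, unital complete positivity, and trace preservation; this is classical for tracial von Neumann algebras and is the only place where any care is needed.
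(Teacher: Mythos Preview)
Your proof is correct and follows essentially the same route as the paper: strengthen the Haagerup net via \cite[Proposition~2.2]{MR1962471} to get unital trace-preserving maps, pass to the $\tau$-adjoint, and symmetrize. The paper obtains your $T_\alpha^\sharp$ by citing \cite[Lemma~2.5]{MR2240699} (Anantharaman-Delaroche) rather than sketching its construction, and it simply asserts the $\|\cdot\|_2$-convergence of the symmetrized net without the explicit contraction-plus-inner-product argument you supply; your version is therefore slightly more self-contained, but the underlying idea is identical.
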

\begin{proof}
The proof is merely an application of the following result (see \cite[Lemma~2.5]{MR2240699}). If $T$ is a normal unital completely positive map on $M$, then $\tau\circ T = \tau$ if and only if there is a normal unital completely positive map $S\colon M\to M$ such that $\la Tx,y\ra_\tau = \la x,Sy\ra_\tau$ for every $x,y\in M$.

Suppose $M$ has the Haagerup property and let $(T_\alpha)_{\alpha\in A}$ be a net of normal unital completely positive maps from $M$ to itself such that
\begin{itemize}
	\item $\tau\circ T_\alpha = \tau$ for every $\alpha$,
	\item $T_\alpha$ extends to a compact operator on $L^2(M)$,
	\item $\|T_\alpha x - x \|_2 \to 0$ for every $x\in M$.
\end{itemize}
Then there are normal unital completely positive maps $S_\alpha\colon M\to M$ such that $\la T_\alpha x,y\ra_\tau = \la x,S_\alpha y\ra_\tau$ for every $x,y\in M$. Let $R_\alpha = \frac12(T_\alpha + S_\alpha)$. Then $R_\alpha$ is normal unital completely positive and 
\begin{itemize}
	\item $\la R_\alpha x,y\ra_\tau = \la x, R_\alpha y\ra_\tau$ for every $\alpha$,
	\item $R_\alpha$ extends to a compact operator on $L^2(M)$,
	\item $\|R_\alpha x - x \|_2 \to 0$ for every $x\in M$.
\end{itemize}
Since unital completely positive maps have completely bounded norm 1, this shows that $\Lambda_\WH(M,\tau) \leq 1$. This completes the proof.
\end{proof}

It is mentioned in \cite{MR1962471} that injective finite von Neumann algebras have the Haagerup property. Indeed, it is a deep, and by now classical, result that injective von Neumann algebras are semidiscrete \cite{MR0405117}, \cite{MR0430794}, \cite{MR0454659} (see \cite[Theorem~9.3.4]{MR2391387} for a proof of the finite case based on \cite{MR0448108}). It then follows from \cite[Proposition~4.6]{MR2279097} that injective von Neumann algebras which admit a faithful normal trace have the Haagerup property. In particular, injective von Neumann algebras with a faithful normal trace have the weak Haagerup property.

We now turn to discrete groups and their group von Neumann algebras. For the moment, fix a discrete group $\Gamma$. We let $\lambda$ denote the left regular representation of $\Gamma$ on $\ell^2(\Gamma)$. The von Neumann algebra generated by $\lambda(\Gamma)$ inside $B(\ell^2(\Gamma))$ is the \emph{group von Neumann algebra} denoted $L(\Gamma)$. It is equipped with the faithful normal trace $\tau$ given by $\tau(x) = \la x\delta_e,\delta_e\ra$ for $x\in L(\Gamma)$.

\begin{repthm}{thm:vna}
Let $\Gamma$ be a discrete group. The following conditions are equivalent.
\begin{enumerate}
	\item The group $\Gamma$ has the weak Haagerup property.
	\item The group von Neumann algebra $L(\Gamma)$ (equipped with its canonical trace) has the weak Haagerup property.
\end{enumerate}
More precisely, $\Lambda_\WH(\Gamma) = \Lambda_\WH(L(\Gamma))$.
\end{repthm}
\begin{proof}
Suppose the net $( u_\alpha)$ of maps in $B_2(\Gamma)\cap C_0(\Gamma)$ witnesses the weak Haagerup property of $\Gamma$ with $\| u_\alpha\|_{B_2} \leq C$ for every $\alpha$. Upon replacing $ u_\alpha$ with $\frac12( u_\alpha + \bar u_\alpha)$ we may assume that $ u_\alpha$ is real. Let $T_\alpha = M_{u_\alpha}$ be the corresponding multiplier on $L(\Gamma)$, that is
\begin{align}\label{eq:diagonal}
T_\alpha \lambda(g) =  u_\alpha(g)\lambda(g), \quad g\in\Gamma.
\end{align}
Then $T_\alpha$ is normal and completely bounded on $L(\Gamma)$ with $\|T_\alpha\|_\cb = \| u_\alpha\|_{B_2}$. From \eqref{eq:diagonal} it follows that $T_\alpha$ extends to a diagonal operator $\tilde T_\alpha$ on $L^2(L(\Gamma))$, when $L^2(L(\Gamma))$ has the standard basis $\{\lambda(g)\}_{g\in G}$. Since $ u_\alpha$ is real, $\tilde T_\alpha$ is self-adjoint. In particular $\la T_\alpha x,y \ra_\tau = \la x,T_\alpha y\ra_\tau$ for all $x,y\in L(\Gamma)$. Also, $\tilde T_\alpha$ is compact, because $ u_\alpha \in C_0(\Gamma)$. Since $ u_\alpha \to 1$ pointwise and $\| u_\alpha\|_\infty \leq C$, it follows that $\tilde T_\alpha \to 1_{L^2}$ strongly on $L^2(L(\Gamma))$. By Remark~\ref{rem:topology}, this proves that $L(\Gamma)$ has the weak Haagerup property with $\Lambda_\WH(L(\Gamma)) \leq C$.

Conversely, suppose there is a net $(T_\alpha)$ of maps on $L(\Gamma)$ witnessing the weak Haagerup property of $L(\Gamma)$ with $\|T_\alpha\|_\cb \leq C$ for every $\alpha$. Let
$$
 u_\alpha(g) = \tau(\lambda(g)^*T_\alpha(\lambda(g))).
$$ 

Since $T_\alpha \to \id_{L(\Gamma)}$ point-ultraweakly, and $\tau$ is normal, it follows that $ u_\alpha\to 1$ pointwise.

Let $V:\ell^2(\Gamma)\to\ell^2(\Gamma)\tensor\ell^2(\Gamma)$ be the isometry given by $V\delta_g = \delta_g \tensor \delta_g$. Observe then that
$$
V^*(\lambda(g)\tensor\lambda(h))V = \begin{cases}
\lambda(g) & \text{if } g = h, \\
0 & \text{if } g \neq h, \\
\end{cases}
$$
so
$$
V^*(\lambda(g)\tensor a)V = \tau(\lambda(g)^*a) \lambda(g).
$$
By Fell's absorption principle \cite[Theorem 2.5.5]{MR2391387} there is a normal $^*$-homomorphism $\sigma:L(\Gamma)\to L(\Gamma)\tensor L(\Gamma)$ such that $\sigma(\lambda(g))= \lambda(g)\tensor\lambda(g)$. Using Lemma~\ref{lem:tensor-cc} we see that the operator $\id_{L(\Gamma)} \tensor T_\alpha$ on $L(\Gamma)\tensor L(\Gamma)$ exists, and it is easily verified that
$$
V^*((\id_{L(\Gamma)} \tensor T_\alpha)(\lambda(g)\tensor\lambda(g)))V =  u_\alpha(g)\lambda(g),
$$
when $g\in\Gamma$, and so
$$
V^*((\id_{L(\Gamma)} \tensor T_\alpha)(\sigma(a)))V = M_{ u_\alpha}(a) \quad \text{for all } a\in L(\Gamma).
$$
It follows that $M_{ u_\alpha}$ is completely bounded and $ u_\alpha\in B_2(\Gamma)$ with 
$$
\| u_\alpha\|_{B_2} = \|M_{ u_\alpha}\|_\cb \leq \|T_\alpha\|_\cb \leq C,
$$
where the first inequality follows from Proposition~D.6 in \cite{MR2391387}.

It remains to show that $ u_\alpha\in C_0(\Gamma)$. We may of course suppose that $\Gamma$ is infinite. Since $T_\alpha$ extends to a compact operator on $L^2(L(\Gamma))$, it follows that
$$
\lim_g \|T_\alpha(\lambda(g)) \|_2 = 0,
$$
because $(\lambda(g))_{g\in\Gamma}$ is orthonormal in $L^2(\Gamma)$. By the Cauchy-Schwarz inequality
$$
| u_\alpha(g)| \leq \|T_\alpha\lambda(g)\|_2 \to 0 \qquad \text{as } g\to \infty.
$$
This completes the proof.
\end{proof}

\section{Hereditary properties II}\label{sec:hereditary2}
In this section we prove hereditary results for the weak Haagerup property of von Neumann algebras.
As an application we are able to show that the weak Haagerup property of a von Neumann algebra does not depend on the choice of the faithful normal trace.

When $M$ is a finite von Neumann algebra with a faithful normal trace $\tau$, and $p\in M$ is a non-zero projection, we let $\tau_p$ denote the faithful normal trace on $pMp$ given as $\tau_p(x) = \tau(p)^{-1} \tau(x)$.

Since we have not yet proved that the weak Haagerup property of a von Neumann algebra does not depend on the choice of faithful normal trace (Proposition~\ref{prop:independent-trace}), we state Theorem~\ref{thm:hereditary2} in the following more cumbersome way. Once we have shown Proposition~\ref{prop:independent-trace}, Theorem~\ref{thm:hereditary2} makes sense and is Theorem~\ref{thm:hereditary2-2}

\begin{customthm}{\ref*{thm:hereditary2}'}\label{thm:hereditary2-2}
Let $(M,\tau)$, $(M_1,\tau_1)$ and $(M_2,\tau_2)$ be a finite von Neumann algebras with a faithful normal traces.

\begin{enumerate}
	\item Suppose $(M,\tau)$ is weakly Haagerup with constant $C$, and $N\subseteq M$ is a von Neumann subalgebra. Then $(N,\tau)$ is weakly Haagerup with constant at most $C$.
	\item Suppose $(M,\tau)$ is weakly Haagerup with constant $C$, $p\in M$ is a non-zero projection. Then $(pMp,\tau_p)$ is weakly Haagerup with constant at most $C$.
	\item Suppose $1\in N_1\subseteq N_2\subseteq \cdots$ are von Neumann subalgebras of $M$ generating all of $M$, and that there is an increasing sequence of non-zero projections $p_n\in N_n$ with strong limit $1$. If each $(p_nN_np_n,\tau_{p_n})$ is weakly Haagerup with constant at most $C$, then $(M,\tau)$ is weakly Haagerup with constant at most $C$.
	\item Suppose $(M_1,\tau_1)$, $(M_2,\tau_2)$, $\ldots$ is a (possibly finite) sequence of von Neumann algebras with faithful normal traces, and that $\alpha_1,\alpha_2,\ldots$ are strictly positive numbers with $\sum_n \alpha_n = 1$. Then the weak Haagerup constant of
	$$
	\left(\bigoplus_n M_n, \bigoplus_n \alpha_n\tau_n\right)
	$$
	equals $\sup_n \Lambda_\WH(M_n,\tau_n)$, where $\bigoplus_n \alpha_n\tau_n$ denotes the trace defined by
	$$
	\left(\bigoplus_n \alpha_n\tau_n\right)(x_n) = \sum_n \alpha_n\tau_n(x_n), \quad (x_n)_n \in \bigoplus_n M_n.
	$$.
	\item Suppose $(M_1,\tau_1)$ and $(M_2,\tau_2)$ are weakly Haagerup with constant $C_1$ and $C_2$, respectively. Then the tensor product $(M_1\vntensor M_2,\tau_1\vntensor\tau_2)$ is weakly Haagerup with constant at most $C_1C_2$.
\end{enumerate}
\end{customthm}

\begin{proof}\mbox{}

(1) Let $E:M\to N$ be the unique trace-preserving conditional expectation. Given a net $(T_\alpha)$ witnessing the weak Haagerup property of $M$ we let $S_\alpha = E\circ T_\alpha|_N$. Clearly, $\|S_\alpha\|_\cb \leq \|T_\alpha\|_\cb$. Since $E$ is an $N$-bimodule map, trace-preserving and positive, an easy calculation shows that $\la S_\alpha x,y\ra = \la x,S_\alpha y\ra$ for every $x,y\in N$.

As is customary, the Hilbert space $L^2(N)$ is naturally identified with the closed subspace of $L^2(M)$ spanned by $N \subseteq  M \subseteq L^2(M)$, and the conditional expectation $E\colon M\to N$ extends to a projection $e_N:L^2(M)\to L^2(N)$. Since $T_\alpha$ extends to a compact operator $\tilde T_\alpha$ on $L^2(M)$, it follows that $E\circ T_\alpha$ extends to the compact operator $e_N\tilde T_\alpha$ on $L^2(M)$. Hence $S_\alpha$ extends to the compact operator $e_N\tilde T_\alpha|_{L^2(N)}$ on $L^2(N)$.

Since $E$ is normal, $E|_N = 1_N$, and $T_\alpha \to 1_M$ point-ultraweakly, we obtain $S_\alpha\to 1_N$ point-ultraweakly.

(2) Let $P:M\to pMp$ be the map $P(x) = pxp$, $x\in M$. Then $P$ is unital and completely positive. Given a net $(T_\alpha)$ witnessing the weak Haagerup property of $M$ we let $S_\alpha = P\circ T_\alpha|_{pMp}$. Clearly, $\|S_\alpha\|_\cb \leq \|T_\alpha|_{pMp}\|_\cb \leq \|T_\alpha\|_\cb$. An easy calculation shows that
$$
\la S_\alpha x,y\ra_{\tau_p} = \la x,S_\alpha y\ra_{\tau_p} \quad\text{for all } x,y\in pMp.
$$

Let $V:L^2(pMp)\to L^2(M)$ be the map $Vx = \tau(p)^{-1/2}x$. Then $V$ is an isometry, and evidently $V^*x = \tau(p)^{1/2} pxp$ for every $x\in M$. It follows that on $pMp$ we have $S_\alpha = V^*T_\alpha V$. Hence $S_\alpha$ extends to the compact operator
$$
\tilde S_\alpha = V^*\tilde T_\alpha V,
$$
on $L^2(pMp)$, where $\tilde T_\alpha$ denotes the extension of $T_\alpha$ to a compact operator on $L^2(M)$.

Since $P$ is normal, it follows that $S_\alpha\to 1$ point-ultraweakly.

(3) We denote the trace-preserving conditional expectation $M\to N_n$ by $E_n$ and its extension to a projection $L^2(M)\to L^2(N_n)$ by $e_n$. Note first that since $M$ is generated by the sequence $N_n$, for each $x\in M$ we have $E_n(x) \to x$ strongly. Indeed, the union of the increasing sequence of Hilbert spaces $L^2(N_n)$ is a norm dense subspace of the Hilbert space $L^2(M)$, and thus $e_n\nearrow 1_{L^2(M)}$ strongly. In other words, $\|E_n(x) - x \|_\tau \to 0$.

For each $n\in\N$ we let $S_n : M\to p_nN_np_n$ be $S_n(x) = p_nE_n(x)p_n$. It follows that $S_n(x) \to x$ strongly.

Let $F\subseteq M$ be a finite set, and let $\epsilon > 0$ be given. Choose $n$ such that
$$
\|S_n(x) - x\|_\tau \leq \epsilon \quad\text{for all } x\in F.
$$
By assumption there is a completely bounded map $R:p_nN_np_n\to p_nN_np_n$ such that $\|R\|_\cb \leq C$, $R$ extends to a self-adjoint compact operator on $L^2(p_nN_np_n)$, and
$$
\|R(S_n(x)) - S_n(x)\|_{\tau_{p_n}} \leq \epsilon \quad\text{for all } x\in F.
$$
Let $T_\alpha = R\circ S_n$, where $\alpha = (F,\epsilon)$. Clearly,
$$
\|T_\alpha x - x \|_\tau \leq 2\epsilon \quad\text{when } x\in F.
$$
It follows that $T_\alpha \to 1$ point-strongly.

Since $S_n$ is unital and completely positive, we get $\|T_\alpha\|_\cb \leq \|R\|_\cb \leq C$. When $x,y\in M$ we have
\begin{align*}
\la T_\alpha x,y\ra_\tau &= \la R(p_nE_n(x)p_n), p_nE_n(y)p_n \ra_\tau \\
&= \la p_nE_n(x)p_n, R(p_nE_n(y)p_n)\ra_\tau = \la x,T_\alpha y\ra_\tau
\end{align*}
using the properties of $E_n$ and $R$.
Since $T_\alpha$ is the composition
$$
\xymatrix{
M \ar[r]^{E_n} & N_n \ar[r]^-{P_n} & p_nN_np_n \ar[r]^{R} & p_nN_np_n \ar[r]^-{\iota} & N_n \ar[r]^{\iota} & M,
}
$$
where $\iota$ denotes inclusion, it follows that the extension of $T_\alpha$ to $L^2(M)$ is compact, because the extension of $R$ to $L^2(p_nN_np_n)$ is compact:
$$
\xymatrix@-0.5pc{
L^2(M) \ar[r]^{e_n} & L^2(N_n) \ar[r]^-{\tilde P_n} & L^2(p_nN_np_n) \ar[r]^{\tilde R} & L^2(p_nN_np_n) 
\ar[r]^-{\tilde\iota} & L^2(N_n) \ar[r]^{\tilde \iota} & L^2(M).
}
$$
The net $(T_\alpha)_{\alpha\in A}$ indexed by $A = \{(F,\epsilon) \mid F\subseteq M \text{ finite},\ \epsilon > 0\}$ shows that the weak Haagerup constant of $M$ is at most $C$.

(4) It is enough to show that the weak Haagerup constant of $M_1 \oplus M_2$ with respect to the trace $\tau = \lambda \tau_1 \oplus (1-\lambda)\tau_2$
	equals
	$$
	\max\{ \Lambda_\WH(M_1,\tau_1), \Lambda_\WH(M_2,\tau_2) \}
	$$
	for any $0 < \lambda < 1$, and then apply induction and (3) to obtain the general case of (4). We only prove
	\begin{align}\label{eq:direct-sum}
	\Lambda_\WH(M_1 \oplus M_2) \leq \max\{ \Lambda_\WH(M_1), \Lambda_\WH(M_2) \},
	\end{align}
	since the other inequality is clear from (2).
	
	Two points should be made. Firstly, if $T_1$ and $T_2$ are normal completely bounded maps on $M_1$ and $M_2$ respectively, then $T_1 \oplus T_2$ is a normal completely bounded map on $M$ with completely bounded norm
	$$
	\|T_1 \oplus T_2 \|_\cb = \max\{ \|T_1\|_\cb, \|T_2\|_\cb \}.
	$$
	Secondly, the map $V(x\oplus y) = \lambda^{1/2} x \oplus (1-\lambda)^{1/2} y$ on $M_1\oplus M_2$ extends to a unitary operator
	$$
	V\colon L^2(M_1\oplus M_2,\tau) \to L^2(M_1,\tau_1) \oplus L^2(M_2,\tau_2).
	$$
	Now, let $\epsilon > 0$ be given and let $(S_\alpha)_{\alpha\in A}$ and $(T_\beta)_{\beta\in B}$ be normal completely bounded maps on $M_1$ and $M_2$, respectively such that
\begin{itemize}
	\item $\|S_\alpha\|_\cb \leq \Lambda_\WH(M_1,\tau_1) + \epsilon$ for every $\alpha$,
	\item $\la S_\alpha x,y\ra_{\tau_1} = \la x,S_\alpha y\ra_{\tau_1}$ for every $x,y\in M_1$,
	\item each $S_\alpha$ extends to a \emph{compact} operator on $L^2(M_1,\tau_1)$,
	\item $S_\alpha x \to x$ ultraweakly for every $x\in M_1$,
\end{itemize}
and similar properties holds for $(T_\beta)_{\beta\in B}$ and $M_2$. We may assume that $A = B$. Now, let $R_\alpha = S_\alpha \oplus T_\alpha$. Using the net $(R_\alpha)$ it is easy to show that
$$
\Lambda_\WH(M_1 \oplus M_2) \leq \max\{ \Lambda_\WH(M_1), \Lambda_\WH(M_2) \} + \epsilon.
$$
Letting $\epsilon \to 0$ we obtain \eqref{eq:direct-sum}.

(5) We remark that the product trace $\tau_1\vntensor\tau_2$ on the von Neumann algebraic tensor product $M_1 \vntensor M_2$ is a faithful normal trace (see \cite[Corollary~IV.5.12]{MR1873025}). Suppose we are given nets $(S_\alpha)_{\alpha\in A}$ and $(T_\beta)_{\beta\in B}$ witnessing the weak Haagerup property of $M_1$ and $M_2$, respectively. By Remark~\ref{rem:topology} we may assume that
\begin{align}\label{eq:SO-conv}
\tilde S_\alpha \to 1_{L^2(M_1)} \text{ strongly} \quad\text{and}\quad
\tilde T_\beta \to 1_{L^2(M_2)} \text{ strongly},
\end{align}
where $\tilde S_\alpha$ and $\tilde T_\beta$ denote the extensions to operators on $L^2(M_1)$ and $L^2(M_2)$, respectively. For each $\gamma = (\alpha,\beta) \in A\times B$, we consider the map $R_\gamma = S_\alpha \vntensor T_\beta$ given by Lemma~\ref{lem:tensor-cc} below. Then $R_\gamma$ is a normal, completely bounded map on $M\vntensor N$ with $\|R_\gamma\|_\cb \leq \|S_\alpha\|_\cb \|T_\beta\|_\cb$. Let $\tau = \tau_1\vntensor\tau_2$ be the product trace. We claim that when $A\times B$ is given the product order, the net $(R_\gamma)_{\gamma\in A\times B}$ witnesses the weak Haagerup property of $M_1\vntensor M_2$, i.e. that
\begin{enumerate}
	\item[(a)] $\la R_\gamma x,y\ra_\tau = \la x,R_\gamma y\ra_\tau$ for every $x,y\in M_1 \vntensor M_2$.
	\item[(b)] Each $R_\gamma$ extends to a \emph{compact} operator $\tilde R_\gamma$ on $L^2(M_1\vntensor M_2,\tau)$.
	\item[(c)] $\tilde R_\gamma \to 1_{L^2(M_1\vntensor M_2)}$ strongly.
\end{enumerate}
Condition (a) is easy to check on elementary tensors, and then when $x$ and $y$ are in the algebraic tensor product $M_1\tensor M_2$. Since the unit ball of the algebraic tensor product $M_1\tensor M_2$ is dense in the unit ball of $M_1\vntensor M_2$ in the strong$^*$ operator topology, it follows that (a) holds for arbitrary $x,y\in M_1\vntensor M_2$.

If $V : L^2(M_1) \tensor L^2(M_2) \to L^2(M_1 \vntensor M_2)$ is the unitary which is the identity on $M_1\tensor M_2$, then
$$
R_\gamma = V(S_\alpha \vntensor T_\beta)V^*
$$
Thus, since the tensor product of two compact operators is compact, $R_\gamma$ extends to a compact operator on $L^2(M_1 \vntensor M_2)$.

Condition (c) follows easily from \eqref{eq:SO-conv} and the general fact that if two bounded nets $(V_\alpha)$ and $(W_\beta)$ of operators on Hilbert spaces converge strongly with limits $V$ and $W$, then the net $V_\alpha\tensor W_\beta$ converges strongly to $V\tensor W$.
\end{proof}

In the course of proving (5) above, we postponed the proof of Lemma~\ref{lem:tensor-cc} concerning the existence of the tensor product of two normal, completely bounded map between von Neumann algebras. A version of the lemma exists for completely contractive maps between operator spaces, when the tensor product under consideration is the operator space projective tensor product (see \cite[Proposition~7.1.3]{MR1793753}) or the operator space injective tensor product (see \cite[Proposition~8.1.5]{MR1793753}). The operator space injective tensor product coincides with the minimal \Cs-algebraic tensor product, when the operator spaces are von Neumann algebras (see \cite[Proposition~8.1.6]{MR1793753}). Also, a version of the lemma exists for normal, completely positive maps between von Neumann algebras (\cite[Proposition~IV.5.13]{MR1873025}). See also \cite[Lemma~1.5]{MR784292}.

\begin{lem}\label{lem:tensor-cc}
Suppose $M_i$ and $N_i$ ($i = 1,2$) are von Neumann algebras and $T_i:M_i\to N_i$ are normal, completely contractive maps. Then there is a normal, completely contractive map $T_1 \vntensor T_2 : M_1\vntensor M_2\to N_1\vntensor N_2$ such that
$$
T_1 \vntensor T_2 (x_1\tensor x_2) = T_1x_1 \tensor T_2x_2 \quad\text{ for all } x_i\in M_i \ (i = 1,2).
$$
\end{lem}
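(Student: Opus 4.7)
The plan is to use the normal version of the Wittstock--Paulsen--Haagerup representation theorem for completely bounded maps, essentially following the argument of \cite[Lemma~1.5]{MR784292}. After scaling we may assume each $T_i$ is completely contractive. First I would faithfully realize each $N_i$ as a von Neumann subalgebra of $B(H_i)$ via a normal embedding (for instance, using the standard form), so that each $T_i$ becomes a normal completely contractive map $M_i \to B(H_i)$. Then, by the normal dilation theorem for completely bounded maps into $B(H)$, there exist a Hilbert space $K_i$, a normal unital $*$-representation $\pi_i : M_i \to B(K_i)$, and bounded operators $V_i, W_i : H_i \to K_i$ with $\|V_i\|\,\|W_i\| \leq 1$ such that
$$
T_i(x) = V_i^* \pi_i(x) W_i \qquad\text{for every } x \in M_i.
$$

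Next I would form the spatial tensor product representation $\pi = \pi_1 \vntensor \pi_2 : M_1 \vntensor M_2 \to B(K_1 \tensor K_2)$, which exists and is normal since the tensor product of normal $*$-representations of von Neumann algebras is well-defined. Setting $V = V_1 \tensor V_2$ and $W = W_1 \tensor W_2$ as bounded operators $H_1 \tensor H_2 \to K_1 \tensor K_2$, I define
$$
S(x) = V^* \pi(x) W, \qquad x \in M_1 \vntensor M_2.
$$
This $S$ is a priori a normal, completely bounded map from $M_1 \vntensor M_2$ into $B(H_1 \tensor H_2)$ with $\|S\|_\cb \leq \|V\|\,\|W\| \leq 1$. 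On an elementary tensor one computes
$$
S(x_1 \tensor x_2) = V^*(\pi_1(x_1) \tensor \pi_2(x_2)) W = T_1(x_1) \tensor T_2(x_2) \in N_1 \vntensor N_2,
$$
so by linearity $S$ sends the algebraic tensor product $M_1 \tensor M_2$ into $N_1 \vntensor N_2$. Since $M_1 \tensor M_2$ is $\sigma$-weakly dense in $M_1 \vntensor M_2$ and $S$ is normal, the image $S(M_1 \vntensor M_2)$ lies in the $\sigma$-weak closure of $N_1 \tensor N_2$ in $B(H_1 \tensor H_2)$, which is precisely $N_1 \vntensor N_2$. Co-restricting gives the desired normal completely contractive map $T_1 \vntensor T_2$ with the prescribed action on elementary tensors.

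The main technical obstacle is guaranteeing a \emph{normal} Stinespring-type dilation $T_i(x) = V_i^* \pi_i(x) W_i$; this is where normality of $T_i$ is genuinely used, and without it one would only get a map on the minimal $C^*$-tensor product $M_1 \injtensor M_2$ by functoriality of the injective operator space tensor product. Once the normal dilation is in hand, the construction of $S$ and the verification that it lands in $N_1 \vntensor N_2$ are routine bookkeeping using $\sigma$-weak density of the algebraic tensor product and normality of $S$.
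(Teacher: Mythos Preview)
Your argument is correct, and it is essentially the De~Canni\`ere--Haagerup route that the paper cites but does not follow. The paper instead first invokes functoriality of the operator space injective tensor product (Effros--Ruan) to get a completely contractive map $T_1\otimes T_2$ on the minimal tensor product $M_1\otimes_{\min} M_2$, and then proves directly that this map is ultraweakly continuous by checking that $\rho\circ(T_1\otimes T_2)$ is ultraweakly continuous for every $\rho$ in the predual $(N_1\vntensor N_2)_*$, reducing first to product functionals $\rho_1\otimes\rho_2$ and then using their norm density; the extension to the ultraweak closure and complete contractivity then follow from Kaplansky-type extension results applied also to $T_1\otimes T_2\otimes\id_n$. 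Your approach trades this predual bookkeeping for the structural input of the normal Wittstock dilation, after which normality and the correct range are immediate from the tensor product of normal $*$-representations; the paper's approach avoids invoking the dilation theorem and is in that sense more self-contained, at the cost of a longer continuity verification.
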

\begin{proof}
It follows from \cite[Proposition~8.1.5]{MR1793753} and \cite[Proposition~8.1.6]{MR1793753} that there is a completely contractive map $T_1 \tensor T_2 : M_1\tensor_\mr{min} M_2\to N_1\tensor_\mr{min} N_2$ between the minimal tensor products such that
$$
T_1 \tensor T_2 (x_1\tensor x_2) = T_1x_1 \tensor T_2x_2 \quad\text{ for all } x_i\in M_i \ (i = 1,2).
$$
We must show that $T_1\tensor T_2$ extends continuously to a completely contraction from the ultraweak closure $M_1\vntensor M_2$ of $M_1\tensor_\mr{min} M_2$. First we show that $T_1\tensor T_2$ is ultraweakly continuous. For this, it will suffice to show that $\rho \circ T_1\tensor T_2$ is ultraweakly continuous on $M_1 \tensor_\mr{min} M_2$ for each ultraweakly continuous functional $\rho \in (N_1\vntensor N_2)_*$.

Suppose first that $\rho$ is of the form $\rho_1\tensor\rho_2$ for some $\rho_1\in (N_1)_*$ and $\rho_2\in (N_2)_*$. Then if we let $\sigma_i = \rho_i \circ T_i$, it is clear that $\sigma_1\tensor \sigma_2$ is ultraweakly continuous \cite[11.2.7]{MR1468230}, and $\rho \circ (T_1\tensor T_2) = \sigma_1 \tensor \sigma_2$. In general, $\rho$ is the norm limit of a sequence of functionals $\rho_n$ where each $\rho_n$ is a finite linear combination of ultraweakly continuous product functionals \cite[11.2.8]{MR1468230}, and it then follows from \cite[10.1.15]{MR1468230} that $\rho\circ T_1\tensor T_2$ is ultraweakly continuous.

Now, from \cite[10.1.10]{MR1468230} it follows that $T_1\tensor T_2$ extends (uniquely) to an ultraweakly continuous contraction $M_1\vntensor M_2\to N_1\vntensor N_2$. The same argument applied to $T_1\tensor T_2 \tensor \id_n$, where $\id_n\colon M_n(\C)\to M_n(\C)$ is the identity, shows that $T_1\vntensor T_2$ is not only contractive, but completely contractive.
\end{proof}

\begin{rem}
Theorem~\ref{thm:hereditary2} (1)--(3) may conveniently be expressed as the following inequalities.
\begin{align*}
\Lambda_\WH(N) &\leq \Lambda_\WH(M), \\
\Lambda_\WH(pMp) &\leq \Lambda_\WH(M), \\
\Lambda_\WH(M) &= \sup_{n\in\N} \Lambda_\WH(p_n N_n p_n), 
\end{align*}
when $N\subseteq M$ is a subalgebra, $p\in M$ is a non-zero projection, $(N_n)_{n\geq 1}$ is an increasing sequence of subalgebras generating $M$ with projections $p_n \in N_n$, $p_n\nearrow 1$. Theorem~\ref{thm:hereditary2} (5) reads
\begin{align}\label{eq:WH-multiplication}
\Lambda_{\WH}(M_1\bar\tensor M_2) \leq \Lambda_{\WH}(M_1)\Lambda_{\WH}(M_2).
\end{align}
\end{rem}

\begin{rem}
We do not know if $\Lambda_{\WH}(M_1\bar\tensor M_2) = \Lambda_{\WH}(M_1)\Lambda_{\WH}(M_2)$ holds for any two finite von Neumann algebras $M_1$ and $M_2$. The corresponding result for the weak amenability constant $\Lambda_\WA$ is known to be true, \cite[Theorem~4.1]{MR1323684}. If either $\Lambda_\WH(M_1) = 1$ or $\Lambda_\WH(M_2) = 1$, then equality holds in \eqref{eq:WH-multiplication}.
\end{rem}

We will now show that the weak Haagerup property does not depend on the choice of the faithful normal trace. The basic idea of the proof is to apply the noncommutative Radon-Nikodym theorem. Since the Radon-Nikodym derivative in general may be an unbounded operator, we will need to cut it into pieces that are bounded and then apply Theorem~\ref{thm:hereditary2} (4) in the end.

Let $M$ be a von Neumann algebra acting on a Hilbert space $\mc H$, and let $M'$ denote the commutant of $M$. A (possibly unbounded) closed operator $h$ is affiliated with $M$ if $h u = uh$ (with agreement of domains) for every unitary $u\in M'$. If $h$ is bounded, then by the bicommutant theorem $h$ is affiliated with $M$ if and only if $h\in M$. In general, if $h$ is affiliated with $M$, then $f(h)$ lies in $M$ for every bounded Borel function $f$ on $[0,\infty[$. See e.g. \cite[Appendix~B]{MR2433341} for details.

We recall the version of the Radon-Nikodym theorem that we will need. We refer to \cite{MR0412827} for more details. We denote the center of $M$ by $Z(M)$. Let $\tau$ be a faithful normal trace on $M$ and suppose $h$ is a self-adjoint, positive operator affiliated with $Z(M)$. For $\epsilon > 0$ put $h_\epsilon = h(1+\epsilon h)^{-1}$. Then $h_\epsilon \in Z(M)_+$ for every $\epsilon > 0$. When $x\in M_+$, define the number $\tau(hx)$ by
\begin{align}\label{eq:derived-weight}
\tau(hx) = \lim_{\epsilon\to 0} \tau(h_\epsilon x).
\end{align}
Then $\tau'$ defined by $\tau'(x) = \tau(hx)$ is a normal semifinite weight on $M$. If moreover $\lim_\epsilon \tau(h_\epsilon) = 1$, then \eqref{eq:derived-weight} makes sense for all $x\in M$ and defines a normal trace $\tau'$ on $M$. The Radon-Nikodym theorem \cite[Theorem~5.4]{MR0412827} gives a converse to this: Given any normal trace $\tau'$ on $M$ there is a unique self-adjoint positive operator $h$ affiliated with $Z(M)$ such that $\tau'(x) = \tau(hx)$ for every $x\in M$.

\begin{prop}\label{prop:independent-trace}
Let $M$ be a von Neumann algebra with two faithful normal traces $\tau$ and $\tau'$. Then $M$ has the weak Haagerup property with respect to $\tau$ if and only if $M$ has the weak Haagerup property with respect to $\tau'$. More precisely,
$$
\Lambda_\WH(M,\tau) = \Lambda_\WH(M,\tau').
$$
\end{prop}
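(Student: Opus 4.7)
The plan is to use the noncommutative Radon--Nikodym theorem to encode the difference between $\tau$ and $\tau'$ by a positive operator $h$ affiliated with $Z(M)$, decompose $M$ into central summands on which $h$ is almost scalar, and assemble the result via Theorem~\ref{thm:hereditary2-2}(4). By symmetry, it suffices to prove $\Lambda_\WH(M,\tau') \leq \Lambda_\WH(M,\tau)$.

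First I would invoke the Radon--Nikodym theorem recalled just before the proposition to produce a positive self-adjoint operator $h$ affiliated with $Z(M)$ with $\tau'(x) = \tau(hx)$ for every $x \in M_+$; since $\tau'$ is faithful and finite, $h$ has trivial kernel and the spectral projection on any interval bounded away from $0$ and $\infty$ lies in $M$. For a fixed $\varepsilon > 0$ I would consider the multiplicative partition of $(0,\infty)$ into intervals $E_k = [(1+\varepsilon)^k,(1+\varepsilon)^{k+1})$, $k \in \Z$, and set $e_k = \chi_{E_k}(h) \in Z(M)$. These are mutually orthogonal central projections summing strongly to $1$, giving a decomposition $M = \bigoplus_k M_k$ with $M_k = e_k M$. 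Theorem~\ref{thm:hereditary2-2}(4), applied separately to $\tau$ and to $\tau'$, reduces the problem to showing $\Lambda_\WH(M_k,\tau|_{M_k}) = \Lambda_\WH(M_k,\tau'|_{M_k})$ for each nonzero $e_k$.

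The key step is the inequality $\Lambda_\WH(M_k,\tau'|_{M_k}) \leq (1+\varepsilon)^{1/2}\Lambda_\WH(M_k,\tau|_{M_k})$. On $M_k$ the element $h_k := h e_k \in Z(M_k)$ is positive, bounded, and bounded below with spectrum in $[(1+\varepsilon)^k,(1+\varepsilon)^{k+1})$, so $\|h_k^{1/2}\|\,\|h_k^{-1/2}\| \leq (1+\varepsilon)^{1/2}$, and the Radon--Nikodym relation reads $\tau'|_{M_k}(x) = \tau|_{M_k}(h_k x)$. Given a net $(T_\alpha)$ witnessing $\Lambda_\WH(M_k,\tau|_{M_k}) \leq C$, I would define $S_\alpha(x) := h_k^{-1/2} T_\alpha(h_k^{1/2} x)$ for $x \in M_k$. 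Centrality of $h_k$ makes $S_\alpha$ a normal completely bounded map of cb-norm at most $(1+\varepsilon)^{1/2} C$; a direct computation using centrality converts the $\tau|_{M_k}$-self-adjointness of $T_\alpha$ into $\tau'|_{M_k}$-self-adjointness of $S_\alpha$; the $Z(M_k)$-linear unitary $L^2(M_k,\tau'|_{M_k}) \to L^2(M_k,\tau|_{M_k})$ given by $x \mapsto h_k^{1/2} x$ intertwines the $L^2$-extensions of $S_\alpha$ and $T_\alpha$, so the former is compact; and $S_\alpha \to 1_{M_k}$ point-ultraweakly because multiplication by $h_k^{\pm 1/2}$ is normal. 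Taking the supremum over $k$ and applying Theorem~\ref{thm:hereditary2-2}(4) once more yields $\Lambda_\WH(M,\tau') \leq (1+\varepsilon)^{1/2} \Lambda_\WH(M,\tau)$, and letting $\varepsilon \to 0$ gives the desired inequality.

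The main obstacle is finding the right twist $S_\alpha$ and verifying the four defining conditions of the weak Haagerup property with good control on the cb-norm: the naive estimate $\|S_\alpha\|_\cb \leq \|h_k^{-1/2}\|\|T_\alpha\|_\cb\|h_k^{1/2}\|$ blows up when $h$ is far from scalar, so one must first cut $h$ into pieces where its spectrum lies in a small multiplicative window; the direct-sum theorem of the previous section is exactly what allows this piecewise estimate to be absorbed into a clean global equality in the end.
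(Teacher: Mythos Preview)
Your proposal is correct and follows essentially the same route as the paper: Radon--Nikodym derivative $h$ affiliated with $Z(M)$, spectral decomposition along the multiplicative grid $[(1+\varepsilon)^k,(1+\varepsilon)^{k+1})$, the twist $S_\alpha(x)=h_k^{-1/2}T_\alpha(h_k^{1/2}x)$ on each central summand, and reassembly via Theorem~\ref{thm:hereditary2-2}(4). The only cosmetic differences are that the paper normalizes the restricted traces explicitly (writing $\tau_{p_n}=\tau(p_n)^{-1}\tau$) before invoking part~(4), and allows itself the looser bound $(1+\varepsilon)$ rather than your $(1+\varepsilon)^{1/2}$; neither affects the argument since $\varepsilon\to 0$ at the end.
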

\begin{proof}

Let $\epsilon > 0$ be arbitrary. We will show that
\begin{align}\label{eq:trace-proof}
\Lambda_\WH(M,\tau') \leq \Lambda_\WH(M,\tau) (1+\epsilon).
\end{align}
By symmetry and letting $\epsilon\to 0$, this will complete the proof. We may of course assume that $ \Lambda_\WH(M,\tau) < \infty$, since otherwise \eqref{eq:trace-proof} obviously holds.

We let $Z(M)$ denote the center of $M$. Suppose first that there is a positive, invertible operator $h\in Z(M)$ such that $\tau'(x) = \tau(hx)$ for every $x\in M$ and $h$ has spectrum $\sigma(h)$ contained in the interval $[c(1+\epsilon)^n,c(1+\epsilon)^{n+1}]$ for some $c > 0$ and some integer $n$. Note that then
$$
\|h^{1/2}\| \|h^{-1/2}\| \leq (1+\epsilon)^{1/2}.
$$

Let $(T_\alpha)$ be a net of normal, completely bounded operators on $M$ such that

\begin{enumerate}
	\item $\|T_\alpha\|_\cb \leq \Lambda_\WH(M,\tau) (1+\epsilon)^{1/2}$ for every $\alpha$,
	\item $\la T_\alpha x,y\ra_\tau = \la x,T_\alpha y\ra_\tau$ for every $x,y\in M$,
	\item each $T_\alpha$ extends to a compact operator on $L^2(M,\tau)$,
	\item $T_\alpha x \to x$ ultraweakly for every $x\in M$.
\end{enumerate}

Since $h$ belongs to $Z(M)_+$, it is easily verified that the map $U\colon M\to M$ defined by $Ux = h^{1/2}x$ extends to an isometry $L^2(M,\tau') \to L^2(M,\tau)$, and since $h$ is invertible, $U$ is actually a unitary. We let $S_\alpha$ be the operator on $M$ defined as $S_\alpha = U^* T_\alpha U$, that is $S_\alpha x = h^{-1/2} T_\alpha(h^{1/2}x)$. Then $S_\alpha$ is normal and completely bounded with
$$
\|S_\alpha\|_\cb \leq \|h^{1/2}\| \|h^{-1/2}\| \|T_\alpha\|_\cb \leq \Lambda_\WH(M,\tau) (1+\epsilon).
$$
Since $U$ is a unitary, it is clear from (2), (3) and (4) that $S_\alpha$ extends to a self-adjoint, compact operator on $L^2(M,\tau')$ and that $S_\alpha x \to x$ ultraweakly for every $x\in M$. This shows that
$$
\Lambda_\WH(M,\tau') \leq \Lambda_\WH(M,\tau) (1+\epsilon).
$$

In general, there is a (possibly unbounded) unique self-adjoint positive operator $h$ affiliated with $Z(M)$ such that $\tau'(x) = \tau(hx)$.  For each $n\in\Z$ let $p_n$ denote the spectral projection of $h$ defined as
$$
p_n = 1_{[(1+\epsilon)^n,(1+\epsilon)^{n+1}[}(h),
$$
and let $q = 1_{\{0\}}(h)$. Then $p_n$ and $q$ are projections in $Z(M)$. Since (the closure of) $hq$ is zero we see that
$$
\tau'(q) = \tau(hq) = \tau(0) = 0,
$$
and then we must have $q = 0$, since $\tau'$ is faithful. Hence
$$
\sum_{n=-\infty}^\infty p_n = 1_{]0,\infty[}(h) = 1.
$$
Let $I$ be the set of those $n\in\Z$ for which $p_n \neq 0$, and for $n\in I$ let $M_n$ denote the von Neumann algebra $p_n M$ with faithful normal trace $\tau_n = \tau(p_n)^{-1}\tau$. Then from the decomposition
$$
M = \bigoplus_{n\in I} M_n
$$
we get by Theorem~\ref{thm:hereditary2} (4) that
$$
\Lambda_\WH(M,\tau) = \sup_{n\in I} \Lambda_\WH(M_n,\tau_n).
$$
Similarly,
$$
\Lambda_\WH(M,\tau') = \sup_{n\in I} \Lambda_\WH(M_n,\tau'_n),
$$
where $\tau'_n = \tau'(p_n)^{-1}\tau'$.

For $n\in I$, let $f_n\colon \R_+\to\R_+$ be defined by $f_n(t) = t 1_{[(1+\epsilon)^n,(1+\epsilon)^{n+1}[}(t)$ and put $h_n = c_n f_n(h)$, where $c_n = \tau(p_n)\tau'(p_n)^{-1}$. Then $h_n \in Z(M_n)_+$ is invertible in $M_n$ with spectrum $\sigma(h_n) \subseteq [c_n(1+\epsilon)^n,c_n(1+\epsilon)^{n+1}]$ and
$$
\tau'_n(x) = \tau_n(h_nx) \quad \text{for every } x\in M_n.
$$
By the first part of the proof applied to $M_n$ we get that $\Lambda_\WH(M_n,\tau_n') \leq \Lambda_\WH(M_n,\tau_n)(1+\epsilon)$ for every $n\in I$. Putting things together we obtain
$$
\Lambda_\WH(M,\tau') = \sup_{n\in I} \Lambda_\WH(M_n,\tau_n') \leq \sup_{n\in I} \Lambda_\WH(M_n,\tau_n)(1+\epsilon) = \Lambda_\WH(M,\tau) (1+\epsilon).
$$
This proves \eqref{eq:trace-proof}, and the proof is complete.

\end{proof}

\section{An example}\label{sec:application}

In this section we give an example of two von Neumann algebras, in fact $\II_1$ factors arising from discrete groups, with different weak Haagerup constants. None of the other approximation properties mentioned in the introduction (see Figure~\ref{fig:aps}) are useful as invariants to distinguish precisely these two factors (see Remark~\ref{rem:other-APs}).

It is well-known that if $\Gamma$ is an infinite discrete group, then $L(\Gamma)$ is a $\II_1$ factor if and only if all conjugacy classes in $\Gamma$ are infinite except for the conjugacy class of the neutral element. Such groups are called ICC (infinite conjugacy classes).

It is known from \cite{MR0147566} that every arithmetic subgroup of $\Sp(1,n)$ is a lattice. Let $\mb H_{\mr{int}}$ be the quaternion integers $\Z + \Z i + \Z j + \Z k$ inside the quaternion division ring $\mb H$, and let $n\geq 2$ be fixed. Then the group $\Gamma$ consisting of matrices in $\Sp(1,n)$ with entries in $\mb H_{\mr{int}}$ is an arithmetic subgroup of $\Sp(1,n)$ and hence a lattice. To be explicit, $\Gamma$ consists of $(n+1)\times (n+1)$ matrices with entries in $\mb H_{\mr{int}}$ that preserve the Hermitian form

$$
h(x,y) =  x_0 \overline{y_0} - \sum_{k=1}^n x_m \overline{y_m}, \quad x = (x_i),\ y=(y_i) \in \mb H^{n+1}.
$$
Here $\mb H^{n+1}$ is regarded as a right $\mb H$-module. If $I$ denotes the identity matrix in $\Sp(1,n)$, then the center of $\Sp(1,n)$ is $\{\pm I\}$, and it is proved in \cite[p.~547]{MR996553} that $\Gamma_0 = \Gamma / \{\pm I\}$ is an ICC group.

Let $H = \Z/2 \wr \F_2$ be the wreath product of $\Z/2$ and $\F_2$ (see Section~\ref{sec:introduction}). Then $H$ is ICC (see \cite[Corollary~4.2]{icc-extension}) and the direct product group $\Gamma_1 = \Gamma_0 \times H$ is also ICC (see \cite[p.~74]{icc-extension}).

Let $\Gamma_2 = \Z^2\rtimes\SL_2(\Z)$. It is well-known that $\Gamma_2$ is ICC and a lattice in $\R^2\rtimes\SL_2(\R)$. We claim that the $\II_1$ factors $L(\Gamma_1)$ and $L(\Gamma_2)$ are not isomorphic. Indeed, we show below that their weak Haagerup constants differ. Since both von Neumann algebras are $\II_1$ factors, there is a unique trace on each of them, so any isomorphism would necessarily be trace-preserving.

Using Theorem~\ref{thm:vna}, Proposition~\ref{prop:products}/Remark~\ref{rem:products}, Proposition~\ref{prop:modK},  Corollary~\ref{cor:lattice} and Theorem~\ref{thm:WA-Lie-rank-one} we get
\begin{align*}
\Lambda_\WH(L(\Gamma_1)) 
&= \Lambda_\WH(\Gamma_1) \\
&= \Lambda_\WH(\Gamma_0)\Lambda_\WH(H) \\
&= \Lambda_\WH(\Gamma) \\
&= \Lambda_\WH(\Sp(1,n)) \\
&\leq \Lambda_\WA(\Sp(1,n)) \\
&= 2n-1 < \infty.
\end{align*}
In \cite[Theorem~A]{HK-WH-examples} it is proved that $\R^2\rtimes\SL_2(\R)$ does not have the weak Haagerup property. Thus, using also Theorem~\ref{thm:vna} and Corollary~\ref{cor:lattice} we get
$$
\Lambda_\WH(L(\Z^2 \rtimes \SL_2(\Z))) = \Lambda_\WH(\Z^2 \rtimes \SL_2(\Z)) = \Lambda_\WH(\R^2 \rtimes \SL_2(\R)) = \infty.
$$
In view of Theorem~\ref{thm:hereditary2} this shows that $L(\Gamma_2)$ cannot be not embedded into any corner of any subalgebra of $L(\Gamma_1)$. In particular, $L(\Gamma_1)$ and $L(\Gamma_2)$ are not isomorphic.

\begin{rem}\label{rem:other-APs}
We remark that $\Gamma_1$ and $\Gamma_2$ do not have the Haagerup property. Also,
$$
\Lambda_\WA(L(\Gamma_1)) = \Lambda_\WA(L(\Gamma_2)) = \infty,
$$
and $\Gamma_1$ and $\Gamma_2$ both have AP. Thus, none of these three approximation properties distinguish $L(\Gamma_1)$ and $L(\Gamma_2)$.
\end{rem}

\appendix
\appendixpage
The appendices contain a collection of results that are used to show the equivalence of several definitions of the weak Haagerup property and of weak amenability. The results are certainly known to experts, but some of the results below do not appear explicitly or in this generality in the literature.

In all of the following $G$ is a locally compact group equipped with left Haar measure $dx$. For definitions concerning the Fourier algebra $A(G)$, the Herz-Schur multipliers $B_2(G)$ and its predual $Q(G)$ we refer to Section~\ref{sec:prelim}.

\section{Topologies on the unit ball of \texorpdfstring{$B_2(G)$}{B2(G)}}\label{app:topologies}
We are concerned with three different topologies on bounded sets in $B_2(G)$ besides the norm topology: The first topology is the weak$^*$-topology, where we view $B_2(G)$ as the dual space of $Q(G)$. It will be referred to as the $\sigma(B_2,Q)$-topology. The second topology is the locally uniform topology, i.e., the topology determined by uniform convergence on compact subsets of $G$. The third topology is the point-norm topology, where we think of elements in $B_2(G)$ as operators on $A(G)$. The following lemma reveals the relations between these topologies.

\begin{lem}\label{lem:A1}
Let $(u_\alpha)$ be a net in $B_2(G)$ and let $u\in B_2(G)$.
\begin{enumerate}
	\item If $\|(u_\alpha -u) w\|_A\to 0$ for every $w\in A(G)$, then $u_\alpha \to u$ uniformly on compacts.
  \item If the net is bounded and $u_\alpha\to u$ uniformly on compacts, then $u_\alpha\to u$ in the $\sigma(B_2,Q)$-topology.
\end{enumerate}
\end{lem}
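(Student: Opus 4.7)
My strategy is to reduce uniform convergence on a compact set $K$ to norm convergence in $A(G)$ via a single ``bump'' function. By Lemma~3.2 of \cite{MR0228628}, there exists $w\in A(G)$ (in fact one can take $w\in A(G)\cap C_c(G)$) with $w(x)=1$ for every $x\in K$. By hypothesis, $\|(u_\alpha-u)w\|_A\to 0$. Since $\|\cdot\|_\infty\leq \|\cdot\|_A$ on $A(G)$, as recalled in Section~\ref{sec:prelim}, this gives $(u_\alpha-u)w\to 0$ uniformly on $G$. Restricting to $K$, where $w\equiv 1$, we conclude $u_\alpha\to u$ uniformly on $K$. The argument is short and the only real ingredient is the existence of the cutoff $w$; no obstacles are expected here.

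\textbf{Plan for part (2).} Set $M=\sup_\alpha\|u_\alpha\|_{B_2}$, which is finite by assumption, and note $\|u\|_{B_2}\leq M$ as well (since the unit ball of $B_2(G)$ is closed under locally uniform limits, see Lemma~\ref{lem:B2-unitball} referenced in the paper). The duality pairing between $Q(G)$ and $B_2(G)$ satisfies
\[
\bigl|\langle f,v\rangle\bigr|\leq \|f\|_Q\,\|v\|_{B_2}\qquad (f\in Q(G),\ v\in B_2(G)),
\]
so in particular $|\langle f,u_\alpha-u\rangle|\leq 2M\|f\|_Q$ uniformly in $\alpha$. Since the net $(u_\alpha-u)$ is uniformly bounded, a standard $3\epsilon$-argument shows it suffices to verify the convergence $\langle f,u_\alpha-u\rangle\to 0$ on the dense subspace $C_c(G)\subseteq Q(G)$ (density was noted in the preliminaries).

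So let $f\in C_c(G)$ with compact support $L$. Then
\[
\bigl|\langle f,u_\alpha-u\rangle\bigr|=\left|\int_G f(x)\bigl(u_\alpha(x)-u(x)\bigr)\,dx\right|\leq \|f\|_1\cdot \sup_{x\in L}|u_\alpha(x)-u(x)|,
\]
and the right hand side tends to $0$ by locally uniform convergence on the compact set $L$. For general $g\in Q(G)$ and $\epsilon>0$, choose $f\in C_c(G)$ with $\|g-f\|_Q<\epsilon/(4M+1)$ and estimate
\[
|\langle g,u_\alpha-u\rangle|\leq \|g-f\|_Q\cdot 2M+|\langle f,u_\alpha-u\rangle|,
\]
where the first term is at most $\epsilon/2$ and the second tends to $0$ by the previous step. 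Letting $\alpha$ be large finishes the proof. No real obstacle is expected; the one point to keep track of is the uniform bound on $\|u_\alpha-u\|_{B_2}$, which is what makes the density reduction legitimate.
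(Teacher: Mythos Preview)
Your proof is correct and follows essentially the same approach as the paper: for part~(1) you use Eymard's Lemma~3.2 to obtain $w\in A(G)$ equal to $1$ on the given compact and the inequality $\|\cdot\|_\infty\leq\|\cdot\|_A$, and for part~(2) you reduce to $f\in C_c(G)$ via density in $Q(G)$ together with the uniform $B_2$-bound on the net. The only difference is that you spell out the $3\epsilon$ density reduction explicitly (and add the observation $\|u\|_{B_2}\leq M$, which is true but unnecessary since $u\in B_2(G)$ is already assumed), whereas the paper leaves this routine step implicit.
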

\begin{proof}
Suppose $\|(u_\alpha -u) w\|_A\to 0$ for every $w\in A(G)$, and let $L\subseteq G$ be a compact subset. By \cite[Lemma~3.2]{MR0228628} there is a $w\in A(G)$ which takes the value 1 on $L$. Hence
$$
\sup_{x\in L} |u_\alpha(x) - u(x)| \leq \|(u_\alpha-u)w\|_\infty \leq \|(u_\alpha-u)w\|_A \to 0.
$$
This proves (1).

Suppose $u_\alpha\to u$ uniformly on compacts. Since the net $(u_\alpha)$ is bounded, and $C_c(G)$ is dense in $Q(G)$, it will suffice to prove $\la u_\alpha,f\ra \to \la u,f\ra$ for every $f\in C_c(G)$. Let $L = \supp f$. Then since $u_\alpha \to u$ uniformly on $L$, we obtain
$$
\la f,u_\alpha\ra = \int_L f(x)u_\alpha(x) \ dx  \to \int_L f(x)u(x) \ dx = \la f,u\ra.
$$
This proves (2).
\end{proof}

\begin{rem}
In the proof of (2), the assumption of boundedness is essential. In general, there always exist (possibly unbounded) nets $(u_\alpha)$ in $A(G) \subseteq B_2(G)$ converging to $1$ uniformly on compacts (use \cite[Lemma~3.2]{MR0228628}), but for groups without the Approximation Property (AP) such as $\SL_3(\Z)$ no such net can converge to 1 in the $\sigma(B_2,Q)$-topology (see \cite{MR1220905} and \cite[Theorem~C]{MR2838352}).
\end{rem}

\begin{lem}\label{lem:B2-unitball}
The unit ball of $B_2(G)$ is closed in $C(G)$ under locally uniform convergence and even pointwise convergence.
\end{lem}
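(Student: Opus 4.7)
The statement about locally uniform convergence is formally weaker than the one about pointwise convergence (given that the limit is assumed continuous), so it suffices to prove the pointwise version: if $(u_\alpha)$ is a net in $B_2(G)$ with $\|u_\alpha\|_{B_2}\leq 1$, and $u_\alpha(x)\to u(x)$ for every $x\in G$ with $u\in C(G)$, then $u\in B_2(G)$ and $\|u\|_{B_2}\leq 1$. My approach is to use the finite-matrix characterization of the Herz-Schur norm from Proposition~\ref{prop:Gilbert}(2).

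By that characterization, to conclude $\|u\|_{B_2}\leq 1$ it is enough to show, for every $n\in\N$ and every choice of $x_1,\ldots,x_n\in G$, that the Schur norm of the $n\times n$ matrix
$$
A(u) = \bigl(u(x_j^{-1}x_i)\bigr)_{i,j=1}^n
$$
is at most $1$. By hypothesis, the analogous matrix $A(u_\alpha)$ has Schur norm at most $1$ for each $\alpha$, and pointwise convergence $u_\alpha\to u$ forces entrywise convergence $A(u_\alpha)\to A(u)$ in $M_n(\C)$.

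The crucial point is that the Schur norm $\|\cdot\|_S$ is a norm on the finite-dimensional space $M_n(\C)$, hence continuous with respect to entrywise convergence. Therefore
$$
\|A(u)\|_S = \lim_\alpha \|A(u_\alpha)\|_S \leq 1.
$$
Combining this with the assumed continuity of $u$ and applying Proposition~\ref{prop:Gilbert}(2) in the reverse direction yields $u\in B_2(G)$ with $\|u\|_{B_2}\leq 1$, which is what we wanted.

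There is really no substantive obstacle: once one appeals to the finite-matrix characterization (2) of Proposition~\ref{prop:Gilbert}, the argument reduces to the trivial observation that a norm on a finite-dimensional vector space is continuous in the coordinates. Had we only had the operator-theoretic definition of $\|\cdot\|_{B_2}$ at our disposal, we would have needed to invoke lower semicontinuity of the completely bounded norm under pointwise-ultraweak limits of multipliers, which is less immediate; the finite-matrix reformulation bypasses that complication entirely.
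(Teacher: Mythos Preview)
Your proof is correct and is exactly the argument the paper has in mind: the paper's own proof consists of the single sentence ``This is obvious from the equivalence (1)$\iff$(2) in Proposition~\ref{prop:Gilbert},'' and your write-up unpacks precisely that observation.
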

\begin{proof}
This is obvious from the equivalence (1)$\iff$(2) in Proposition~\ref{prop:Gilbert}.
\end{proof}

\begin{lem}\label{lem:unit-ball-w-closed}
The unit ball of $B_2(G)$ is closed in $B_2(G)$ in the $\sigma(B_2,Q)$-topology.
\end{lem}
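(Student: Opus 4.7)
The plan is to invoke the general fact that the closed unit ball of any dual Banach space is weak*-closed in its ambient space. Since the paper has already recalled (citing \cite{MR784292}) that $B_2(G)$ is isometrically the dual of $Q(G)$ with the duality $\langle f,u\rangle = \int_G f(x)u(x)\,dx$, and since the $\sigma(B_2,Q)$-topology is by definition the weak*-topology arising from this duality, the statement is formally an instance of this standard fact.

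Concretely, I would write the norm on $B_2(G)$ as a supremum over the predual: by the duality identification,
\[
\|u\|_{B_2} = \sup\bigl\{ |\langle f,u\rangle| \;:\; f\in Q(G),\ \|f\|_Q \leq 1 \bigr\}
\]
for every $u\in B_2(G)$. Thus the unit ball of $B_2(G)$ is the intersection
\[
\bigcap_{\substack{f\in Q(G)\\ \|f\|_Q\leq 1}} \bigl\{ u\in B_2(G) \;:\; |\langle f,u\rangle|\leq 1 \bigr\}.
\]
For each fixed $f$ in the unit ball of $Q(G)$, the functional $u\mapsto \langle f,u\rangle$ is $\sigma(B_2,Q)$-continuous by the very definition of this topology, so the set $\{u : |\langle f,u\rangle|\leq 1\}$ is $\sigma(B_2,Q)$-closed. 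An arbitrary intersection of closed sets is closed, which gives the result.

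There is no real obstacle here; the only thing to check is that the two ingredients are in place, namely the isometric identification $B_2(G)^* \cong Q(G)$ (already quoted from \cite{MR784292} in Section~\ref{sec:prelim}) and the definition of the $\sigma(B_2,Q)$-topology as the weak*-topology from this predual. One could alternatively cite Banach--Alaoglu or the Krein--Smulian theorem, but the direct argument above is short enough to write out in one line.
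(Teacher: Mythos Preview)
Your proposal is correct and takes essentially the same approach as the paper: the paper's proof consists of the single sentence ``This is a consequence of Banach--Alaoglu's Theorem,'' which is precisely one of the alternatives you mention at the end. Your direct argument (writing the ball as an intersection of $\sigma(B_2,Q)$-closed half-spaces) is just an unpacked version of the same standard fact that the unit ball of a dual space is weak$^*$-closed.
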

\begin{proof}
This is a consequence of Banach-Alaoglu's Theorem.
\end{proof}

\section{Average tricks}\label{app:average}

\subsection{The convolution trick}

In all of the following $h$ is a continuous, non-negative, compactly supported function on $G$ such that $\int h(x) \ dx = 1$. Such functions exist, and if $G$ is a Lie group, one can even take $h$ to be smooth.

The convolution trick consists of replacing a given convergent nets $(u_\alpha)$ in $B_2(G)$ with the convoluted nets $h * u_\alpha$ to obtain convergence in a stronger topology. Recall that the convolution of $h$ with $u\in L^p(G)$ is defined by
$$
(h*u)(x) = \int_G h(y)u(y^{-1}x) \ dy = \int_G h(xy)u(y^{-1}) \ dy, \quad x\in G.
$$

\begin{lem}[The convolution trick -- Part I]\label{lem:B1}
Let $u \in C(G)$ be given and let $h$ be as above.
\begin{enumerate}
\item If $u\in C_c(G)$, then $ h*u\in C_c(G)$.
\item If $u\in C_0(G)$, then $ h*u\in C_0(G)$.
\item If $u$ is uniformly bounded, then $\|h*u\|_\infty \leq \| u\|_\infty$.
\item If $u\in A(G)$, then $ h*u\in A(G)$ and $\| h*u\|_A \leq \| u\|_A$.
\item If $ u\in B_2(G)$, then $ h*u\in B_2(G)$ and $\| h*u\|_{B_2(G)} \leq \| u\|_{B_2(G)}$.
\item If $G$ is a Lie group and $h\in C^\infty_c(G)$, then $ h*u\in C^\infty(G)$.
\end{enumerate}
\end{lem}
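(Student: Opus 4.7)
My plan is to treat the six parts in turn, all ultimately relying on the pointwise estimate $|(h*u)(x)| \le \int_G h(y)|u(y^{-1}x)|\,dy$ together with the assumptions on $h$ (non-negative, continuous, compactly supported, of integral one). Parts (1)--(3) are essentially standard convolution folklore. Part (3) is immediate since $|(h*u)(x)| \le \|u\|_\infty \int_G h(y)\,dy = \|u\|_\infty$. For (1), $\mr{supp}(h*u) \subseteq \mr{supp}(h)\cdot\mr{supp}(u)$, which is the continuous image of a product of compacts, hence compact; the continuity of $h*u$ comes from left-uniform continuity of $u$ on the compact set $\mr{supp}(h)^{-1}\cdot\mr{supp}(u)$. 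Part (2) then follows by uniformly approximating $u\in C_0(G)$ by functions in $C_c(G)$, using (1) and (3) to realize $h*u$ as a uniform limit of elements of $C_c(G)$.

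For (4), I would factor $u = f * \check{g}$ with $f,g \in L^2(G)$ and $\|u\|_A = \|f\|_2\|g\|_2$. Associativity of convolution (applied to $h\in L^1$, $f\in L^2$, $\check g \in L^2$) gives $h*u = (h*f)*\check g$, and Young's inequality yields $\|h*f\|_2 \le \|h\|_1 \|f\|_2 = \|f\|_2$. Hence $h*u \in A(G)$ with $\|h*u\|_A \le \|f\|_2\|g\|_2 = \|u\|_A$.

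For (5) the natural route is via the Gilbert-type representation of Proposition~\ref{prop:Gilbert}(4). Given continuous bounded maps $P,Q:G\to\mc H$ with $u(y^{-1}x) = \la P(x),Q(y)\ra$ and $(\sup_G\|P\|)(\sup_G\|Q\|) \le \|u\|_{B_2}$, I compute
\be
(h*u)(y^{-1}x) = \int_G h(z) u(z^{-1}y^{-1}x)\,dz = \int_G h(z) \la P(x),Q(yz)\ra\,dz = \la P(x),Q'(y)\ra,
\ee
where $Q'(y) = \int_G h(z)Q(yz)\,dz$ is a Bochner integral, well-defined because $z\mapsto Q(yz)$ is continuous and bounded and $h$ has compact support. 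Pulling the norm inside the integral gives $\|Q'(y)\| \le \sup_G\|Q\|$, and continuity of $Q'$ follows from dominated convergence, using compact support of $h$ and boundedness of $Q$. The pair $(P,Q')$ then realizes $h*u$ with $\|h*u\|_{B_2} \le \|u\|_{B_2}$.

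For (6), when $h\in C^\infty_c(G)$, writing $(h*u)(x) = \int_G h(xy)u(y^{-1})\,dy$ allows differentiation under the integral sign: the compact support of $h$ confines the integrand to a compact set in $y$, and smoothness of $h$ provides integrable dominants for all left-invariant derivatives. The main obstacle I anticipate is (5) --- specifically, verifying that $Q'$ takes values in $\mc H$ and is continuous --- but this is a routine Bochner-integration argument for a continuous compactly-supported $\mc H$-valued integrand; once it is set up, the estimate on $\|h*u\|_{B_2}$ falls out immediately.
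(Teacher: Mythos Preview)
Your proposal is correct throughout. Parts (1)--(4) and (6) match the paper's treatment essentially verbatim (the paper in fact leaves (1)--(3) and (6) as exercises, and your argument for (4) is identical to the paper's).

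The one genuine difference is in (5). The paper works with the finite-matrix Schur-norm characterization (Proposition~\ref{prop:Gilbert}(2)): for fixed $x_1,\dots,x_n\in G$ it observes that the map $y\mapsto \bigl(u(y^{-1}x_j^{-1}x_i)\bigr)_{i,j}$ is continuous from $G$ into $(M_n(\C),\|\cdot\|_S)$ with values in the ball of radius $\|u\|_{B_2}$, so the Bochner integral $\int_G h(y)\,(u(y^{-1}x_j^{-1}x_i))_{i,j}\,dy = \bigl((h*u)(x_j^{-1}x_i)\bigr)_{i,j}$ has Schur norm at most $\|u\|_{B_2}$. You instead use the Gilbert representation (Proposition~\ref{prop:Gilbert}(4)) and average $Q$ to produce $Q'(y)=\int_G h(z)Q(yz)\,dz$. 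Both are clean averaging arguments; the paper's route needs only finite-dimensional Banach-space integration and avoids worrying about whether the infimum in the Gilbert representation is attained, while your route gives an explicit Gilbert representation of $h*u$ directly, which is arguably more transparent and reusable. Either way the estimate $\|h*u\|_{B_2}\le\|u\|_{B_2}$ drops out immediately.
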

\begin{proof}
\mbox{}

We leave (1)--(3) as an exercise.

(4) If $u\in A(G)$, then $u = f * \check g$ for some $f,g\in L^2(G)$ with $\|u\|_A = \|f\|_2\ \|g\|_2$. Then $h * u = (h * f) * \check g$. Since $h * f\in L^2(G)$ with $\|h * f\|_2 \leq \|f\|_2$ (see \cite[p. 52]{MR1397028}), it follows that $h * u\in A(G)$ with $\|h * u\|_A \leq \|u\|_A$.

(5) We use the characterization of Herz-Schur multipliers given in Proposition~\ref{prop:Gilbert}. Given $y\in G$ we let $y.u$ be defined by $(y.u)(x) = u(y^{-1}x)$ for $x\in G$. Clearly, $y.u\in B_2(G)$ and $\|y.u\|_{B_2} = \|u\|_{B_2}$.

Let $n\in\N$ and $x_1,\ldots,x_n\in G$ in be given and let $m\in M_n(\C)$ be the $n\times n$ matrix
$$
m = u(x_j^{-1}x_i)_{i,j=1}^n.
$$
More generally, for any $y\in G$, let $y.m$ denote the matrix
$$
y.m = u(y^{-1} x_j^{-1}x_i)_{i,j=1}^n
$$
Clearly, $\|y.m\|_S \leq \|y.u\|_{B_2} = \|u\|_{B_2}$ and $y\mapsto y.m$ is continuous from $G$ into $M_n(\C)$, when $M_n(\C)$ is equipped with the Schur norm. Thus, by usual Banach space integration theory,
$$
(h * u)(x_j^{-1}x_i)_{i,j=1}^n = \int_G h(y) (y.m) \ dy
$$
has Schur norm at most $\|u\|_{B_2}$. By Proposition~\ref{prop:Gilbert} (2) it follows that the Herz-Schur norm of $h*u$ satisfies
$$
\| h*u\|_{B_2(G)} \leq \| u\|_{B_2(G)}.
$$

(6) This is elementary.
\end{proof}

The proof of (1) in the lemma below is taken from \cite[p. 510]{MR996553}. Although they in \cite{MR996553} assume that $u_\alpha \in A(G)$ and $u = 1$, the proof carries over without changes.

\begin{lem}[The convolution trick -- Part II]\label{lem:B2}
Let $(u_\alpha)$ be a bounded net in $B_2(G)$, let $u\in B_2(G)$ and let $h$ be as above. We set
$$
v_\alpha = h * u_\alpha \quad\text{and}\quad v = h * u.
$$
\begin{enumerate}
\item If $ u_\alpha \to  u$ uniformly on compacts then $\| (v_\alpha -v )w \|_A\to 0$ for every $ w\in A(G)$.
\item If $ u_\alpha \to  u$ in the $\sigma(B_2,Q)$-topology then $ v_\alpha \to  v$ uniformly on compacts.
\end{enumerate}
\end{lem}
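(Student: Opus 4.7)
The plan is to handle the two parts separately, with (2) being the more direct.

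For (2), I would take a direct approach. For each fixed $x\in G$, a change of variable $z = y^{-1}x$ in the convolution (with a modular-function factor in the non-unimodular case) gives
$(v_\alpha-v)(x) = \la u_\alpha - u, k_x\ra$,
where $k_x\in L^1(G)\subseteq Q(G)$ is built from $h$ via translation and inversion and depends continuously on $x$ in $L^1$-norm. The hypothesis $u_\alpha\to u$ in $\sigma(B_2,Q)$ then yields $v_\alpha(x)\to v(x)$ for each $x$. To promote this to uniform convergence on a compact $K\subseteq G$, note that $\{k_x : x\in K\}$ is a compact subset of $L^1(G)\hookrightarrow Q(G)$, and by the standard fact that a norm-bounded weak$^*$-convergent net converges uniformly on compact subsets of the predual, the convergence is uniform on $K$.

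For (1), I would first reduce to the case $u=0$ by passing from $u_\alpha$ to $u_\alpha-u$. By Lemma~\ref{lem:B1}(5), $(v_\alpha)$ is bounded in $B_2(G)$, hence the multiplier operators $M_{v_\alpha}\colon A(G)\to A(G)$, $w\mapsto v_\alpha w$, are uniformly bounded by~\eqref{eq:multiplier-norm}. By uniform boundedness and density, it suffices to prove $\|v_\alpha w\|_A\to 0$ for $w$ in a norm-dense subset of $A(G)$. I would take the set $\{w = \xi*\check\eta : \xi,\eta\in L^2(G)\cap C_c(G)\}$, which is dense via the factorization $A(G) = L^2(G)*\check{L^2(G)}$ and the $L^2$-density of compactly supported functions. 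For such $w$, apply Gilbert's representation (Proposition~\ref{prop:Gilbert}(4)) to write $u_\alpha(y^{-1}x) = \la P_\alpha(x), Q_\alpha(y)\ra_{\mc H}$ with $P_\alpha,Q_\alpha\colon G\to\mc H$ bounded and continuous (on a common Hilbert space, after a direct-sum reduction). Averaging against $h$ gives $v_\alpha(x) = \la P_\alpha(x), Q_\alpha^h\ra_{\mc H}$, where $Q_\alpha^h = \int h(y)Q_\alpha(y)\,dy \in\mc H$. Combining with $w = \xi*\check\eta$ via a Fell-absorption / tensor-product calculation, $v_\alpha w$ can then be expressed as a matrix coefficient $\la \lambda(\cdot)\tilde\xi_\alpha, \tilde\eta_\alpha\ra$ of the left regular representation, where $\tilde\xi_\alpha, \tilde\eta_\alpha\in L^2(G)$ are built explicitly from $P_\alpha, Q_\alpha^h, \xi, \eta$.

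The main obstacle is the last step in (1): bounding $\|\tilde\xi_\alpha\|_2\|\tilde\eta_\alpha\|_2\to 0$ from the locally uniform decay of $u_\alpha$. Mere local-uniform decay of $u_\alpha$ does not in general give $A$-norm convergence of $u_\alpha w$ (it gives only weak convergence in $A(G)$), so the convolution structure must genuinely be used. The averaging over $\mathrm{supp}(h)$ in the definition of $Q_\alpha^h$, combined with the compact supports of $\xi$ and $\eta$, should concentrate the interaction of $P_\alpha, Q_\alpha^h$ with $\xi, \eta$ to a compact subset of $G$ (essentially $\mathrm{supp}(h)^{-1}\cdot\mathrm{supp}(\xi)\cdot\mathrm{supp}(\eta)^{-1}$), where the uniform decay of $u_\alpha$ translates into $L^2$-norm decay of one of $\tilde\xi_\alpha, \tilde\eta_\alpha$ and hence into the required $A$-norm bound.
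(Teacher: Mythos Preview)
Your argument for (2) is correct and is essentially the paper's proof: write $(h*u_\alpha)(x)=\la h_x,\check u_\alpha\ra$ with $h_x(y)=h(xy)$, note that $x\mapsto h_x$ is $L^1$-continuous (hence $Q$-continuous), and use that a bounded weak$^*$-convergent net converges uniformly on norm-compact subsets of the predual.

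For (1), however, your route through Gilbert's representation has a real gap. Writing $v_\alpha(x)=\la P_\alpha(x),Q_\alpha^h\ra$ and then trying to manufacture $\tilde\xi_\alpha,\tilde\eta_\alpha\in L^2(G)$ with $v_\alpha w=\la\lambda(\cdot)\tilde\xi_\alpha,\tilde\eta_\alpha\ra$ runs into the problem that $P_\alpha$ is not a representation, so there is no Fell absorption available; and even if you force such a factorization, locally uniform decay of $u_\alpha$ only controls the \emph{inner products} $\la P_\alpha(x),Q_\alpha(y)\ra$ on compacta, not the individual norms $\|P_\alpha(x)\|$, $\|Q_\alpha^h\|$. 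Your last paragraph acknowledges this obstacle but does not overcome it: ``concentration to a compact set'' applies to the argument $x$, not to the vectors in the auxiliary Hilbert space $\mc H$, so there is no mechanism to make $\|\tilde\xi_\alpha\|_2\|\tilde\eta_\alpha\|_2\to 0$.

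The paper's argument avoids the Hilbert space $\mc H$ entirely and localizes $u_\alpha$ itself. After reducing to $w\in A(G)\cap C_c(G)$ by density, set $S=\supp(h)^{-1}\supp(w)$ and observe that $(h*u_\alpha)w=(h*1_Su_\alpha)w$ as functions (if $x\in\supp w$ and $h(y)\neq 0$ then $y^{-1}x\in S$). Now $1_Su_\alpha\to 1_Su$ uniformly on the compact set $S$, hence in $L^2(G)$, and the factorization $A(G)=L^2(G)*\widecheck{L^2(G)}$ gives
\[
\|h*1_Su_\alpha - h*1_Su\|_A \le \|h\|_2\,\|\widecheck{1_Su_\alpha}-\widecheck{1_Su}\|_2 \to 0.
\]
Since $A(G)$ is a Banach algebra, multiplying by $w$ preserves this convergence. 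The point is that convolution with $h\in C_c(G)\subseteq L^2(G)$ sends $L^2$-convergence to $A(G)$-convergence, and the compact support of $w$ lets you replace $u_\alpha$ by a compactly supported truncation without changing the product---this is the ``genuine use of the convolution structure'' you were looking for, and it is considerably simpler than the Gilbert route.
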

\begin{proof}
\mbox{}\newline
(1) Assume $u_\alpha \to  u$ uniformly on compacts. Since the net $(u_\alpha)$ is bounded in $B_2$-norm, and since $A(G) \cap C_c(G)$ is dense in $A(G)$, it follows from \eqref{eq:multiplier-norm} that it will suffice to prove that
$$
\| (v_\alpha -v )w \|_A\to 0
$$
for every $w\in A(G) \cap C_c(G)$. We let $S$ denote the compact set $\supp(h)^{-1}\supp(w)$ and $1_S$ its characteristic function. Then if $x\in \supp(w)$
$$
(h * u_\alpha)(x) = \int_G h(y) u_\alpha(y^{-1}x) \ dy = \int_G h(y) (1_Su_\alpha)(y^{-1}x)\ dy
$$
because if $y^{-1}x\notin S$, then $h(y) = 0$. It follows that
\begin{align}\label{eq:vw1}
(v_\alpha w)(x) = ((h*1_S u_\alpha) w)(x).
\end{align}
Note that \eqref{eq:vw1} actually holds for all $x\in G$, since if $x\notin\supp(w)$, then both sides vanish. Similarly one can show
$$
(vw)(x) = ((h*1_S u) w)(x) \quad\text{ for all } x\in G.
$$
By assumption, $1_S u_\alpha \to 1_S u$ uniformly, and hence
$$
\| h * 1_S u_\alpha - h* 1_S u \|_A \leq \|h\|_2 \ \|\widecheck{1_S u_\alpha} - \widecheck{1_S u} \|_2 \to 0.
$$
Since multiplication in $A(G)$ is continuous we also have
$$
\| (v_\alpha - v)w \|_A\to 0.
$$
This completes the proof of (1).

(2) For each $x\in G$, let $t(x) = h_x \in C_c(G)$ be the function $h_x(y) = h(xy)$. The map $t: G\to C_c(G)$ is continuous, when $C_c(G)$ is equipped with the $L^1$-norm (see \cite[Proposition~2.41]{MR1397028}). Since the $Q$-norm is dominated by the $L^1$-norm, it follows that $t$ is continuous into $Q(G)$.

Assume that $u_\alpha \to  u$ in the $\sigma(B_2,Q)$-topology, and let $L\subseteq G$ be compact. Since the net $(u_\alpha)$ is bounded, the convergence is uniform on compact subsets of $Q(G)$. By continuity of $t$, the set
$$
T = \{ h_x\in C_c(G) \mid x\in L\}
$$
is a compact subset of $Q(G)$. Hence
$$
(h*u_\alpha)(x) = \la h_x, \check u_\alpha\ra \to \la h_x, \check u\ra = (h*u)(x)
$$
uniformly on $L$.
\end{proof}

\begin{rem}\label{rem:conv-trick}
In applications, $u$ will often be the constant function $1\in B_2(G)$, and in that case $h * u = 1$.
\end{rem}

\begin{lem}
Let $(u_\alpha)$ be a net in $B_2(G)$ and let $u\in B_2(G)$. We set
$$
v_\alpha = h * u_\alpha \quad\text{and}\quad v = h * u.
$$
\begin{enumerate}
\item If $ u_\alpha \to  u$ uniformly on compacts then $ v_\alpha \to  v$ uniformly on compacts.
\item If $ u_\alpha \to  u$ in the $\sigma(B_2,Q)$-topology then $v_\alpha \to  v$ in the $\sigma(B_2,Q)$-topology.
\end{enumerate}
\end{lem}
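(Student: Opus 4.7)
The plan is to treat both statements as continuity assertions for the convolution operator $T\colon u \mapsto h*u$ on $B_2(G)$, in the respective topologies of each part.

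For (1), a direct pointwise estimate suffices, and notably requires no boundedness of the net. Fix a compact set $L\subseteq G$ and let $K = \supp(h)$. Since $h\geq 0$ and $\int h = 1$, for every $x\in L$,
\[
|v_\alpha(x) - v(x)| \leq \int_G h(y)\,|u_\alpha(y^{-1}x) - u(y^{-1}x)|\, dy \leq \sup_{z\in K^{-1}L} |u_\alpha(z) - u(z)|.
\]
As $K^{-1}L$ is compact, the hypothesis $u_\alpha \to u$ uniformly on compacts forces the right-hand side to tend to $0$, and therefore $v_\alpha \to v$ uniformly on $L$.

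For (2), I would show that $T$ is the Banach space adjoint of a bounded operator on $Q(G)$; this is the cleanest route to $\sigma(B_2,Q)$-continuity. Define $T_*\colon L^1(G) \to L^1(G)$ by
\[
(T_*f)(z) = \int_G h(y) f(yz)\, dy.
\]
A Fubini computation (valid since $u\in L^\infty$, $h$ is continuous with compact support, and $f\in L^1$), using left Haar invariance to substitute $z = y^{-1}x$, produces the duality
\[
\la h*u,\, f\ra \;=\; \la u,\, T_*f\ra \qquad\text{for all } u\in B_2(G),\ f\in L^1(G).
\]
Combined with Lemma~\ref{lem:B1}~(5), which asserts $\|h*u\|_{B_2}\leq \|u\|_{B_2}$, this gives
\[
\|T_*f\|_Q \;=\; \sup_{\|u\|_{B_2}\leq 1} |\la h*u, f\ra| \;\leq\; \sup_{\|v\|_{B_2}\leq 1} |\la v, f\ra| \;=\; \|f\|_Q.
\]
Since $L^1(G)$ is $Q$-norm dense in $Q(G)$, $T_*$ extends uniquely to a contraction $Q(G) \to Q(G)$. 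Its Banach space adjoint on $B_2(G)$ coincides with $T$: the two operators agree as functionals on the dense subspace $L^1(G) \subseteq Q(G)$, hence on all of $Q(G)$. Thus $T$ is $\sigma(B_2,Q)$-continuous, and applying this to the hypothesis yields $v_\alpha = Tu_\alpha \to Tu = v$ in the $\sigma(B_2,Q)$-topology.

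The main subtlety lies in (2): one might hope to deduce it immediately from Lemmas~\ref{lem:A1}~(2) and~\ref{lem:B2}~(2), but both those results require a priori $B_2$-norm boundedness of the net. For sequences such boundedness would follow from the uniform boundedness principle applied to the pre-dual $Q(G)$, but a $\sigma(B_2,Q)$-convergent \emph{net} in $B_2(G)$ need not be norm-bounded in general. The adjoint description of $T$ given above circumvents this obstacle entirely, since weak-$*$ continuity of adjoints is purely an operator-theoretic fact requiring no hypothesis on individual nets.
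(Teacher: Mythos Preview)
Your proof is correct and follows essentially the same approach as the paper. Part~(1) is handled identically by a direct pointwise estimate, and for part~(2) both you and the paper realize $u\mapsto h*u$ as the adjoint of a pre-dual contraction on $Q(G)$ (your $T_*$ is exactly the paper's operator $R$, written in the equivalent form obtained by the substitution $x=yz$), with the $Q$-norm bound coming from Lemma~\ref{lem:B1}~(5) in both cases; your closing remark on why boundedness of the net is not needed is a nice addition.
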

\begin{proof}
\mbox{}

(1) For any subset $L\subseteq G$ we observe that 
\begin{align*}
\sup_{x\in L} | v_\alpha(x) - v(x)|
\leq \sup_{x\in \supp(h)^{-1}L} | u_\alpha(x) - u(x)| \to 0.
\end{align*}
If $L$ is compact, then $\supp(u)^{-1}L$ is compact as well. This is sufficient to conclude (1).

(2) Let $\Delta:G\to\R_+$ be the modular function. When $f\in L^1(G)$ we let
$$
(Rf)(y) = \Delta(y^{-1}) \int_G f(x) h(xy^{-1}) \ dx.
$$
It is not hard to show that $\|Rf\|_1 \leq \|f\|_1$ and in particular $Rf\in L^1(G)$. We observe that if $w\in B_2(G)$ then
\begin{align*}
\la f , h*w \ra
&= \int_G f(x) (h*w)(x) \ dx \\
&= \int_{G \times G} f(x) h(xy^{-1}) w(y) \Delta(y^{-1})\ dy dx \\
&= \la Rf,w\ra.
\end{align*}
It now follows from Lemma~\ref{lem:B1} (5) that $R$ extends uniquely to a linear contraction $R:Q(G) \to Q(G)$, and that the dual operator $R^*:B_2(G)\to B_2(G)$ satisfies $R^* w = h*w$. Since $R^*$ is weak$^*$-continuous we conclude
$$
\la f,v_\alpha\ra = \la f, R^*u_\alpha\ra \to \la f, R^*u\ra = \la f,v_\alpha\ra
$$
for any $f\in Q(G)$ as desired.
\end{proof}

\subsection{The bi-invariance trick}

In all of the following $K$ is a compact subgroup of $G$ equipped with normalized Haar measure $dk$.

\begin{lem}[The bi-invariance trick -- Part I]\label{lem:B3}
Let $u\in C(G)$ or $u\in L^1(G)$ be given, and define
\begin{align}\label{eq:sharp}
 u^K(x) = \int_{K\times K}  u(kxk') \ dkdk', \qquad x\in G.
\end{align}
Then $u^K$ is a $K$-bi-invariant function on $G$. Moreover, the following holds.

\begin{enumerate}
\item If $ u\in C(G)$, then $ u^K\in C(G)$.
\item If $ u\in C_c(G)$, then $ u^K\in C_c(G)$.
\item If $ u\in C_0(G)$, then $ u^K\in C_0(G)$.
\item $\| u^K\|_\infty \leq \| u\|_\infty$.
\item If $ u\in L^1(G)$, then $ u^K\in L^1(G)$ and $\|u^K\|_1 \leq \|u\|_1$.
\item If $ u\in A(G)$, then $ u^K\in A(G)$ and $\| u^K\|_A \leq \| u\|_A$.
\item If $ u\in B_2(G)$, then $ u^K\in B_2(G)$ and $\| u^K\|_{B_2(G)} \leq \| u\|_{B_2(G)}$.
\item If $G$ is a Lie group and $ u\in C^\infty(G)$, then $ u^K\in C^\infty(G)$.
\end{enumerate}
\end{lem}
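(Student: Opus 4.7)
The unifying idea is that $u^K$ is a Bochner-type average of the doubly translated functions $(k,k')\mapsto u(k\cdot k')$ over the compact probability space $K\times K$, and since left and right translation by elements of $K$ are isometric for each of the norms in question, the average is at worst contractive. The argument splits naturally into a routine part (statements (1)--(5), (8)), the Fourier-algebra case (6), and the delicate case (7).

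For the routine cases: (1) uses joint continuity of $(x,k,k')\mapsto u(kxk')$ together with compactness of $K\times K$, which lets the continuity of $u^K$ follow by an $\varepsilon$-argument. For (2), $\supp u^K\subseteq K\cdot\supp u\cdot K$ is the continuous image of a compact set. For (3), a compact $L$ with $|u|<\varepsilon$ off $L$ yields the compact set $KLK$ off which $|u^K|<\varepsilon$. (4) is immediate since $K\times K$ has total mass one. For (5), Fubini and left-invariance of the Haar measure give $\int_G|u(kxk')|\,dx=\int_G|u(xk')|\,dx$; since the modular function restricted to the compact subgroup $K$ is a continuous homomorphism into the positive reals, it must be trivial, so right translation by $k'\in K$ is also $L^1$-isometric. (8) follows from differentiation under the integral sign.

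For (6), write $u(x)=\langle\lambda(x)h_1,h_2\rangle$ with $\|h_1\|_2\|h_2\|_2$ arbitrarily close to $\|u\|_A$, and exploit $\lambda(kxk')=\lambda(k)\lambda(x)\lambda(k')$ to move the $K$-averages off of $x$ and onto the vectors:
\[
u^K(x)=\Big\langle\lambda(x)\int_K\lambda(k')h_1\,dk',\int_K\lambda(k^{-1})h_2\,dk\Big\rangle.
\]
Strong continuity of $\lambda$ makes each of the two integrals a Bochner integral of a continuous $L^2(G)$-valued map on a probability space, so the resulting vectors have $L^2$-norm bounded by $\|h_1\|_2$ and $\|h_2\|_2$ respectively; taking the infimum over representations of $u$ gives $\|u^K\|_A\leq\|u\|_A$.

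The main obstacle is (7). Translation is only isometric, not norm-continuous, in $B_2(G)$, so one cannot realize $u^K$ naively as a Bochner integral in $B_2(G)$. Instead, invoke characterization (4) of Proposition~\ref{prop:Gilbert}: write $u(y^{-1}x)=\langle P(x),Q(y)\rangle_{\mc H}$ with $(\sup\|P\|)(\sup\|Q\|)\leq\|u\|_{B_2}$. The identity $ky^{-1}xk'=(yk^{-1})^{-1}(xk')$ gives $u(ky^{-1}xk')=\langle P(xk'),Q(yk^{-1})\rangle$. Define $\tilde P,\tilde Q\colon G\to L^2(K\times K,\mc H)$ by $\tilde P(x)(k,k')=P(xk')$ and $\tilde Q(y)(k,k')=Q(yk^{-1})$. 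Continuity of $\tilde P$ and $\tilde Q$ follows from continuity of $P,Q$ together with dominated convergence on the compact space $K\times K$, and the $L^2$-norms are bounded by $\sup\|P\|$ and $\sup\|Q\|$ since $K\times K$ carries a probability measure. A direct computation yields $\langle\tilde P(x),\tilde Q(y)\rangle=u^K(y^{-1}x)$, and Proposition~\ref{prop:Gilbert}(4) then delivers $\|u^K\|_{B_2}\leq\|u\|_{B_2}$.
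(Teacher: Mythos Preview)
Your proof is correct. For parts (1)--(5), (8) you proceed essentially as the paper does (the paper gives a detailed argument only for (1) and (5), leaving the rest as exercises). For (6) you make the paper's appeal to ``Banach space integration theory'' concrete via the matrix-coefficient formula; this is the same idea, made explicit.

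For (7) you take a genuinely different route. The paper points to an argument \emph{à la} Lemma~\ref{lem:B1}(5), i.e.\ via characterization~(2) of Proposition~\ref{prop:Gilbert}: one views $(u^K(x_j^{-1}x_i))_{i,j}$ as a vector-valued integral over $K\times K$ of matrices that are restrictions of the Schur multiplier $\hat u$, each of Schur norm at most $\|u\|_{B_2}$. You instead use characterization~(4), lifting the factorization $u(y^{-1}x)=\langle P(x),Q(y)\rangle$ to $\tilde P,\tilde Q:G\to L^2(K\times K,\mc H)$. This is arguably cleaner here---the identity $ky^{-1}xk'=(yk^{-1})^{-1}(xk')$ fits the $P,Q$-picture perfectly---and it parallels the construction in Lemma~\ref{lem:HS-continuity}. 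The paper's route has the advantage of reusing the convolution argument verbatim; yours has the advantage of avoiding the (minor) subtlety that the matrices $(u(kx_j^{-1}x_ik'))_{i,j}$ are not literally of the form $(u(y_j^{-1}y_i))$ and one must pass through the full Schur-multiplier norm $\|\hat u\|_S$ rather than characterization~(2) directly.
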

\begin{proof}
\mbox{}

(1) Suppose $u\in C(G)$. To simplify matters, we first show that $u_K$ given by
$$
u_K(x) = \int_K  u(kx) \ dk, \qquad x\in G
$$
is a continuous function on $G$. A similar argument will then show that $u^K$ is continuous, because
$$
u^K(x) = \int_K  u_K(xk) \ dk, \qquad x\in G.
$$
Let $x\in G$ and $\epsilon > 0$ be given. We will find a neighborhood $V$ of the identity such that
$$
|u_K(x) - u_K(zx)| \leq \epsilon \quad\text{for all } z\in V.
$$
Actually, if will be sufficient to verify that
$$
|u(kx) - u(kzx)| \leq \epsilon \quad\text{for all } z\in V \text{ and } k\in K.
$$
For each $k\in K$, the function $x\mapsto u(kx)$ is continuous, so there exists a neighborhood $U_k$ of the identity such that
$$
|u(kx) - u(kzx)| \leq \epsilon/2 \quad\text{for all } z\in U_k.
$$
Let $V_k$ be a neighborhood of the identity such that $V_k V_k \subseteq U_k$. Observe that the sets $k V_k$ where $k\in K$ together cover $K$, so by compactness
$$
K \subseteq k_1 V_{k_1} \cup \cdots \cup k_n V_{k_n}
$$
for some $k_1,\ldots,k_n \in K$. Let $V = \bigcap_{i=1}^n V_{k_i}$. Now, let $k\in K$ and $z\in V$ be arbitrary, and choose $i\in \{1,\ldots,n\}$ such that $k\in k_i V_{k_i}$. Note that then $k_i^{-1} k \in V_{k_i} \subseteq U_{k_i}$ and $k_i^{-1}k z \in V_{k_i} V_{k_i} \subseteq U_{k_i}$. Thus,
$$
|u(kx) - u(kzx)| \leq |u(k_i(k_i^{-1}k)x) - u(k_ix)| + |u(k_ix) - u(k_i(k_i^{-1}kz)x)|         \leq \epsilon 
$$
as desired.

(2)-(4) we leave as an exercise.

(5) Recall (see \cite[Section~2.4]{MR1397028}) the fundamental relation of the modular function~$\Delta$,
$$
\Delta(y) \int_G f(xy) \ dx = \int_G f(x)\ dx.
$$
Also, $\Delta|_K = 1$, since $K$ is compact. We now compute
\begin{align*}
\int_G |u^K(x)| \ dx
& \leq \int_{K\times K} \int_G |u(kxk')| \ dxdkdk' \\
& = \int_{K\times K} \Delta(k') \int_G |u(x)| \ dxdkdk' \\
& = \|f\|_1.
\end{align*}
This proves (5).

(6) It suffices to note that $A(G)$ is a Banach space, that left and right translation on $A(G)$ is continuous and isometric, and then apply usual Banach space integration theory.

(7) This is mentioned in \cite{MR996553}. An argument similar the proof of Lemma~\ref{lem:B1} (5) applies. Alternatively, one can use the proof from \cite[Section~3]{steenstrup1}.

(8) This is elementary.
\end{proof}

\begin{lem}[The bi-invariance trick -- Part II]\label{lem:B4}
Let $(u_\alpha)$ be a net in $B_2(G)$ and let $u\in B_2(G)$. We set
$$
u^K_\alpha(x) = \int_{K\times K}  u_\alpha(kxk') \ dkdk' \quad\text{and}\quad u^K(x) = \int_{K\times K}  u(kxk') \ dkdk'
$$
\begin{enumerate}
\item If $u_\alpha \to  u$ uniformly on compacts then $ u^K_\alpha \to  u^K$ uniformly on compacts.
\item If $u_\alpha \to  u$ in the $\sigma(B_2,Q)$-topology then $ u^K_\alpha \to  u^K$ in the $\sigma(B_2,Q)$-topology.
\end{enumerate}
\end{lem}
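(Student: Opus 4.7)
For part (1), the plan is straightforward. Given a compact set $L\subseteq G$, note that $KLK\subseteq G$ is compact, since $K$ is compact and multiplication is continuous. For any $x\in L$ and $k,k'\in K$ we have $kxk'\in KLK$, so
\begin{align*}
\sup_{x\in L}|u^K_\alpha(x)-u^K(x)|
&\leq \sup_{x\in L}\int_{K\times K}|u_\alpha(kxk')-u(kxk')|\,dkdk' \\
&\leq \sup_{y\in KLK}|u_\alpha(y)-u(y)|,
\end{align*}
which tends to zero by assumption. This settles (1).

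For part (2) the natural strategy is to realize $u\mapsto u^K$ as the adjoint of a bounded operator on the predual $Q(G)$ and then invoke weak$^*$-continuity, in the same spirit as the proof of the convolution trick on $\sigma(B_2,Q)$-convergence. First I would define $R\colon L^1(G)\to L^1(G)$ by the same formula as the bi-invariance averaging, namely $Rf = f^K$. By Lemma~\ref{lem:B3}(5) we have $\|Rf\|_1\leq \|f\|_1$. A direct Fubini computation, using invariance of the Haar measure on $K$ and the fact that $\Delta|_K=1$, yields
$$
\langle f,u^K\rangle=\int_G f(x)u^K(x)\,dx=\int_G (Rf)(y)u(y)\,dy=\langle Rf,u\rangle
$$
for every $f\in L^1(G)$ and $u\in B_2(G)$.

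Next I would upgrade $R$ to a contraction on $Q(G)$. The key point is the estimate
$$
\|Rf\|_Q=\sup_{\|v\|_{B_2}\leq 1}|\langle Rf,v\rangle|=\sup_{\|v\|_{B_2}\leq 1}|\langle f,v^K\rangle|\leq \|f\|_Q,
$$
where the last inequality uses Lemma~\ref{lem:B3}(7), which guarantees $\|v^K\|_{B_2}\leq \|v\|_{B_2}\leq 1$. Since $L^1(G)$ is dense in $Q(G)$, $R$ extends uniquely to a contraction $R\colon Q(G)\to Q(G)$. Its adjoint $R^*\colon B_2(G)\to B_2(G)$ is a weak$^*$-continuous contraction, and the identity $\langle f,R^*u\rangle=\langle Rf,u\rangle=\langle f,u^K\rangle$ on the dense set $L^1(G)\subseteq Q(G)$ forces $R^*u=u^K$ for every $u\in B_2(G)$. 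Weak$^*$-continuity of $R^*$ then gives
$$
u_\alpha\to u\text{ in }\sigma(B_2,Q)\quad\Longrightarrow\quad u^K_\alpha=R^*u_\alpha\to R^*u=u^K\text{ in }\sigma(B_2,Q),
$$
completing (2).

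I do not anticipate any real obstacle here; the only point that requires a little care is verifying that the formula for $R$ coming out of the Fubini/substitution computation indeed coincides with the averaging formula (so that Lemma~\ref{lem:B3}(5) and~(7) apply verbatim). The substitution $y=kxk'$ introduces a factor of $\Delta(k'^{-1})$, which equals $1$ on $K$ by compactness, after which a further change of variable $k\mapsto k^{-1}$, $k'\mapsto k'^{-1}$ (permissible by invariance of the Haar measure on $K$) shows that $Rf=f^K$. Everything else is a formal consequence of weak$^*$-duality.
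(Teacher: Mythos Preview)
Your proof is correct and follows essentially the same approach as the paper. Part~(1) is identical, and for part~(2) both you and the paper verify the duality $\langle f,v^K\rangle=\langle f^K,v\rangle$ on $L^1(G)$, use Lemma~\ref{lem:B3}(7) to deduce $\|f^K\|_Q\leq\|f\|_Q$, extend $f\mapsto f^K$ to a contraction on $Q(G)$, and conclude by weak$^*$-continuity of its adjoint; your write-up is simply a bit more explicit about the Fubini/substitution step.
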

\begin{proof}
\mbox{}

(1) Suppose $u_\alpha \to u$ uniformly on compacts. Let $L\subseteq G$ be compact. Then since $u_\alpha \to u$ uniformly on the compact set $KLK$, we have
\begin{align*}
\sup_{x\in L} | u^K_\alpha(x) - u^K(x) |
& \leq \sup_{x\in L} \int_{K\times K} |u_\alpha(kxk') - u(kxk')| \ dkdk' \\
& \leq \sup_{y\in KLK} | u_\alpha(y) - u(y) | \to 0.
\end{align*}
This shows that $u^K_\alpha \to u^K$ uniformly on $L$.

(2) This is proved in \cite[Lemma~2.5]{MR3047470}. We sketch the proof here. Observe that 
$$
\la f,v^K \ra = \la f^K, v\ra
$$
for any $v\in B_2(G)$ and $f\in L^1(G)$. Thus $\|f^K\|_Q \leq \|f\|_Q$ by Lemma~\ref{lem:B3} (7), and the map $f\mapsto f^K$ extends uniquely to a linear contraction $R:Q(G) \to Q(G)$. The dual operator $R^* : B_2(G)\to B_2(G)$ obviously satisfies $R^* v = v^K$ and is weak$^*$-continuous. Hence
$$
\la f , u_\alpha^K \ra = \la f,R^*u_\alpha\ra \to \la f,R^* u\ra = \la f, u^K\ra
$$
for any $f\in Q(G)$ as desired.
\end{proof}

\section{Continuity of Herz-Schur multipliers}

U.~Haagerup has allowed us to include the following lemma whose proof is taken from Appendix A in the unpublished manuscript \cite{UH-preprint}.

\begin{lem}[\cite{UH-preprint}]\label{lem:HS-continuity}
Let $G$ be a locally compact group, let $ u:G\to\C$ be a function, and suppose there exist a separable Hilbert space $\mc H$ and two bounded Borel maps $P,Q:G\to\mc H$ such that
	$$
	 u(y^{-1}x) = \la P(x),Q(y) \ra \qquad\text{for all } x,y\in G.
	$$
Then $u$ is continuous, $u\in B_2(G)$ and
	$$
	\| u\|_{B_2} \leq \|P\|_\infty \|Q\|_\infty.
	$$
\end{lem}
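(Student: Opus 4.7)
The starting point is a very simple observation: the hypothesis $u(y^{-1}x) = \langle P(x), Q(y)\rangle$ is required to hold for \emph{all} $x,y \in G$. Substituting $x = gy$ yields the overdetermined identity
\[
u(g) = \langle P(gy), Q(y)\rangle, \qquad g, y \in G,
\]
which is the key structural fact. Fixing $\phi \in C_c(G)$ with $\phi \geq 0$ and $\int_G \phi(y)\,dy = 1$ and averaging against $y$ gives the representation
\[
u(g) = \int_G \phi(y) \langle P(gy), Q(y)\rangle\,dy, \qquad g \in G,
\]
and the plan is to read continuity of $u$ off of this formula, and then deduce the Herz--Schur norm bound from Proposition~\ref{prop:Gilbert}.

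\textbf{Step 1: Continuity of $u$.} Fix $g_0 \in G$ and a compact neighborhood $V$ of $g_0$. Choose a compact set $L \subseteq G$ containing $V\cdot \operatorname{supp}(\phi)$. Since $P$ is a bounded Borel map into the separable Hilbert space $\mc H$, Pettis' measurability theorem gives that $P$ is strongly measurable, so the truncation $\tilde P := \mathbf{1}_L \cdot P$ belongs to $L^1(G;\mc H)$. For $g \in V$ and $y \in \operatorname{supp}(\phi)$, we have $gy \in L$, so $P(gy) = \tilde P(gy)$, and therefore
\[
|u(g) - u(g_0)| \leq \|\phi\|_\infty \|Q\|_\infty \int_G \|\tilde P(gy) - \tilde P(g_0 y)\|\,dy.
\]
Substituting $y \mapsto g_0^{-1} y$ and using left-invariance of the Haar measure, the right-hand side equals
\[
\|\phi\|_\infty \|Q\|_\infty \|L_{h^{-1}}\tilde P - \tilde P\|_{L^1(G;\mc H)}, \qquad h := g g_0^{-1},
\]
where $(L_a f)(y) = f(a^{-1}y)$. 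The standard strong continuity of left translation on $L^1(G;\mc H)$ (proved via density of $C_c(G;\mc H)$ and a three-$\epsilon$ argument) yields $L_{h^{-1}}\tilde P \to \tilde P$ in $L^1$ as $h \to e$. Hence $u(g) \to u(g_0)$ as $g \to g_0$, so $u$ is continuous on $G$.

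\textbf{Step 2: The Herz--Schur norm bound.} For any $x_1,\dots,x_n \in G$, the Gram-type matrix
\[
[u(x_j^{-1}x_i)]_{i,j=1}^n = [\langle P(x_i), Q(x_j)\rangle]_{i,j=1}^n
\]
has Schur norm at most $(\sup_i \|P(x_i)\|)(\sup_j \|Q(x_j)\|) \leq \|P\|_\infty \|Q\|_\infty$; this is the usual estimate underlying the implication (4)$\Rightarrow$(1) in Proposition~\ref{prop:Gilbert}. Combined with the continuity of $u$ established in Step~1, the equivalence (2)$\Leftrightarrow$(1) of Proposition~\ref{prop:Gilbert} yields $u \in B_2(G)$ with $\|u\|_{B_2} \leq \|P\|_\infty \|Q\|_\infty$, as desired.

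\textbf{The main obstacle} is Step~1: we must upgrade a pointwise Borel factorization of $u$ to continuity of $u$ itself. The only non-elementary inputs needed are Pettis' theorem for Borel maps into a separable Hilbert space and strong continuity of translation on $L^1(G;\mc H)$; the real care is in choosing the representation of $u$ that allows these two facts to interact, and in the change of variables that converts the difference $\tilde P(gy) - \tilde P(g_0 y)$ into a genuine translation difference in $L^1(G;\mc H)$.
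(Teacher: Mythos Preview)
Your proof is correct. Both your argument and the paper's hinge on the same mechanism---averaging against a function in $C_c(G)$ and exploiting strong continuity of translation on an $L^p(G;\mc H)$ space---but they are packaged differently. The paper takes $h\in C_c(G)$ with $\|h\|_2=1$ and builds \emph{continuous} maps $\hat P,\hat Q:G\to L^2(G;\mc H)$ via $\hat P(x)(z)=h(z)P(zx)$, $\hat Q(x)(z)=h(z)Q(zx)$; one checks $\la\hat P(x),\hat Q(y)\ra=u(y^{-1}x)$ and $\|\hat P\|_\infty\|\hat Q\|_\infty\le\|P\|_\infty\|Q\|_\infty$, and continuity of $\hat P,\hat Q$ follows from strong continuity of the right regular representation on $L^2$. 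Proposition~\ref{prop:Gilbert}(4) then gives everything at once. Your route instead averages $u$ itself with $\phi\in C_c(G)$, $\int\phi=1$, to get continuity of $u$ via $L^1$-translation continuity, and then picks up the norm bound separately from the finite-matrix characterization (2). The paper's version has the advantage of literally producing the continuous Gilbert factorization (so it proves (5)$\Rightarrow$(4) in Proposition~\ref{prop:Gilbert}, not just (5)$\Rightarrow$(1)); your version is a bit leaner if one only wants the conclusion as stated.
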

\begin{proof}
We construct another Hilbert space $\mc K$ and two \emph{continuous} bounded maps $\hat P,\hat Q:G\to\mc K$ such that
	$$
	 u(y^{-1}x) = \la \hat P(x),\hat Q(y) \ra \qquad\text{for all } x,y\in G
	$$
	and
	$$
	\|\hat P\|_\infty \|\hat Q\|_\infty \leq \|P\|_\infty\|Q\|_\infty.
	$$
This will complete the proof in the light of Proposition~\ref{prop:Gilbert} (4).

Take $h\in C_c(G)$ satisfying $\|h\|_2 = 1$, and define
$$
\hat P(x), \hat Q(x) \in L^2(G,\mc H) \quad\text{for all } x\in G
$$
by
$$
\hat P(x)(z) = h(z)P(zx), \quad z\in G;
$$
$$
\hat Q(x)(z) = h(z)Q(zx), \quad z\in G.
$$
We find
\begin{align*}
\la \hat P(x),\hat Q(y)\ra &= \int_G |h(z)|^2 \la P(zx),Q(zy)\ra \ dz    \\
&= \int_G |h(z)|^2  u(y^{-1}x) \ dz  \\
&=  u(y^{-1}x).
\end{align*}
It is also easy to see that
$$
\sup_{x\in G} \| \hat P(x)\|_2 \leq \sup_{x\in G} \|P(x)\|, \quad
\sup_{x\in G} \| \hat Q(x)\|_2 \leq \sup_{x\in G} \|Q(x)\|,
$$
so in particular the maps $\hat P, \hat Q : G\to L^2(G,\mc H)$ are bounded. It remains only to check continuity of $x\mapsto \hat P(x)$ and $x\mapsto \hat Q(x)$. Let $\rho:G\to B(L^2(G))$ be the right regular representation
$$
\rho_x(f)(z) = \Delta^{1/2}(x) f(zx), \quad f\in L^2(G), \ x,z\in G.
$$
We let $R$ be the representation $\rho\otimes 1$ of $G$ on $L^2(G,\mc H) = L^2(G)\otimes \mc H$, that is,
$$
R_x(f)(z) = \Delta^{1/2}(x) f(zx), \quad f\in L^2(G,\mc H), \ x,z\in G.
$$
It is well-known that $\rho$ is strongly continuous (see Proposition 2.41 in \cite{MR1397028}), and hence $R$ is strongly continuous.

Suppose $x_n\to x$ in $G$. Then $R_{x_nx^{-1}} \hat P(x) \to \hat P(x)$ in $L^2(G,\mc H)$. Also,
\begin{align*}
\| R_{x_nx^{-1}} \hat P(x) - \hat P(x_n) \|^2
&= \int_G \| R_{x_nx^{-1}} \hat P(x)(z) - \hat P(x_n)(z) \|^2 \ dz  \\
&= \int_G \| \Delta(x_nx^{-1})^{1/2} h(zx_nx^{-1}) P(zx_n) - h(z)P(zx_n) \|^2 \ dz  \\
&= \int_G |\Delta(x_nx^{-1})^{1/2} h(zx_nx^{-1}) - h(z)|^2 \|P(zx_n) \|^2 \ dz  \\
&\leq \int_G |\Delta(x_nx^{-1})^{1/2} h(zx_nx^{-1}) - h(z)|^2 \|P\|_\infty^2 \ dz  \\
&= \|P\|_\infty^2 \| \rho_{x_nx^{-1}} h - h \|^2 \to 0.
\end{align*}
Hence $\hat P(x_n) \to \hat P(x)$ as desired. Continuity of $\hat Q$ is verified similarly.
\end{proof}

\section*{Acknowledgments}
The author wishes to thank Uffe Haagerup for numerous discussions on the subject and for allowing us to include his proof of Lemma~\ref{lem:HS-continuity}.


\begin{thebibliography}{10}

\bibitem{MR1372231}
C.~Anantharaman-Delaroche.
\newblock Amenable correspondences and approximation properties for von
  {N}eumann algebras.
\newblock {\em Pacific J. Math.}, 171(2):309--341, 1995.

\bibitem{MR2240699}
Claire Anantharaman-Delaroche.
\newblock On ergodic theorems for free group actions on noncommutative spaces.
\newblock {\em Probab. Theory Related Fields}, 135(4):520--546, 2006.

\bibitem{MR2415834}
Bachir Bekka, Pierre de~la Harpe, and Alain Valette.
\newblock {\em Kazhdan's property ({T})}, volume~11 of {\em New Mathematical
  Monographs}.
\newblock Cambridge University Press, Cambridge, 2008.

\bibitem{MR0147566}
Armand Borel and Harish-Chandra.
\newblock Arithmetic subgroups of algebraic groups.
\newblock {\em Ann. of Math. (2)}, 75:485--535, 1962.

\bibitem{MR753889}
Marek Bo{\.z}ejko and Gero Fendler.
\newblock Herz-{S}chur multipliers and completely bounded multipliers of the
  {F}ourier algebra of a locally compact group.
\newblock {\em Boll. Un. Mat. Ital. A (6)}, 3(2):297--302, 1984.

\bibitem{Brannan-Forrest}
Michael Brannan and Brian Forrest.
\newblock Extending multipliers of the {F}ourier algebra from a subgroup.
\newblock Proceedings of the American Mathematical Society. To appear., 2013.

\bibitem{MR2391387}
Nathanial~P. Brown and Narutaka Ozawa.
\newblock {\em {$C^*$}-algebras and finite-dimensional approximations},
  volume~88 of {\em Graduate Studies in Mathematics}.
\newblock American Mathematical Society, Providence, RI, 2008.

\bibitem{MR1852148}
Pierre-Alain Cherix, Michael Cowling, Paul Jolissaint, Pierre Julg, and Alain
  Valette.
\newblock {\em Groups with the {H}aagerup property}, volume 197 of {\em
  Progress in Mathematics}.
\newblock Birkh\"auser Verlag, Basel, 2001.
\newblock Gromov's a-T-menability.

\bibitem{MR0355624}
Hisashi Choda.
\newblock An extremal property of the polar decomposition in von {N}eumann
  algebras.
\newblock {\em Proc. Japan Acad.}, 46:341--344, 1970.

\bibitem{MR718798}
Marie Choda.
\newblock Group factors of the {H}aagerup type.
\newblock {\em Proc. Japan Acad. Ser. A Math. Sci.}, 59(5):174--177, 1983.

\bibitem{MR0405117}
Man~Duen Choi and Edward~G. Effros.
\newblock Separable nuclear {$C\sp*$}-algebras and injectivity.
\newblock {\em Duke Math. J.}, 43(2):309--322, 1976.

\bibitem{MR0430794}
Man~Duen Choi and Edward~G. Effros.
\newblock Nuclear {$C\sp*$}-algebras and injectivity: the general case.
\newblock {\em Indiana Univ. Math. J.}, 26(3):443--446, 1977.

\bibitem{MR0454659}
A.~Connes.
\newblock Classification of injective factors. {C}ases {$II_{1},$} {$II_{\infty
  },$} {$III_{\lambda },$} {$\lambda \not=1$}.
\newblock {\em Ann. of Math. (2)}, 104(1):73--115, 1976.

\bibitem{MR748862}
Michael Cowling.
\newblock Harmonic analysis on some nilpotent {L}ie groups (with application to
  the representation theory of some semisimple {L}ie groups).
\newblock In {\em Topics in modern harmonic analysis, {V}ol. {I}, {II}
  ({T}urin/{M}ilan, 1982)}, pages 81--123. Ist. Naz. Alta Mat. Francesco
  Severi, Rome, 1983.

\bibitem{MR2132866}
Michael Cowling, Brian Dorofaeff, Andreas Seeger, and James Wright.
\newblock A family of singular oscillatory integral operators and failure of
  weak amenability.
\newblock {\em Duke Math. J.}, 127(3):429--486, 2005.

\bibitem{MR996553}
Michael Cowling and Uffe Haagerup.
\newblock Completely bounded multipliers of the {F}ourier algebra of a simple
  {L}ie group of real rank one.
\newblock {\em Invent. Math.}, 96(3):507--549, 1989.

\bibitem{MR784292}
Jean De~Canni{\`e}re and Uffe Haagerup.
\newblock Multipliers of the {F}ourier algebras of some simple {L}ie groups and
  their discrete subgroups.
\newblock {\em Amer. J. Math.}, 107(2):455--500, 1985.

\bibitem{MR2393636}
Yves de~Cornulier, Yves Stalder, and Alain Valette.
\newblock Proper actions of lamplighter groups associated with free groups.
\newblock {\em C. R. Math. Acad. Sci. Paris}, 346(3-4):173--176, 2008.

\bibitem{MR1245415}
Brian Dorofaeff.
\newblock The {F}ourier algebra of {${\rm SL}(2,{\bf R})\rtimes {\bf R}^n$},
  {$n\geq 2$}, has no multiplier bounded approximate unit.
\newblock {\em Math. Ann.}, 297(4):707--724, 1993.

\bibitem{MR1418350}
Brian Dorofaeff.
\newblock Weak amenability and semidirect products in simple {L}ie groups.
\newblock {\em Math. Ann.}, 306(4):737--742, 1996.

\bibitem{MR1793753}
Edward~G. Effros and Zhong-Jin Ruan.
\newblock {\em Operator spaces}, volume~23 of {\em London Mathematical Society
  Monographs. New Series}.
\newblock The Clarendon Press Oxford University Press, New York, 2000.

\bibitem{MR0228628}
Pierre Eymard.
\newblock L'alg\`ebre de {F}ourier d'un groupe localement compact.
\newblock {\em Bull. Soc. Math. France}, 92:181--236, 1964.

\bibitem{MR1397028}
Gerald~B. Folland.
\newblock {\em A course in abstract harmonic analysis}.
\newblock Studies in Advanced Mathematics. CRC Press, Boca Raton, FL, 1995.

\bibitem{MR1681462}
Gerald~B. Folland.
\newblock {\em Real analysis}.
\newblock Pure and Applied Mathematics (New York). John Wiley \& Sons Inc., New
  York, second edition, 1999.
\newblock Modern techniques and their applications, A Wiley-Interscience
  Publication.

\bibitem{MR520930}
Uffe Haagerup.
\newblock An example of a nonnuclear {$C^{\ast} $}-algebra, which has the
  metric approximation property.
\newblock {\em Invent. Math.}, 50(3):279--293, 1978/79.

\bibitem{UH-preprint}
Uffe Haagerup.
\newblock Group {$C^*$}-algebras without the completely bounded approximation
  property.
\newblock Unpublished manuscript, 1986.

\bibitem{MR3047470}
Uffe Haagerup and Tim de~Laat.
\newblock Simple {L}ie groups without the approximation property.
\newblock {\em Duke Math. J.}, 162(5):925--964, 2013.

\bibitem{delaat2}
Uffe Haagerup and Tim de~Laat.
\newblock Simple {L}ie groups without the approximation property {II}.
\newblock Preprint, \href{http://arxiv.org/pdf/1307.2526}{arXiv:1307.2526}, 2013.

\bibitem{HK-WH-examples}
Uffe Haagerup and S{\o}ren Knudby.
\newblock The weak {H}aagerup property: {E}xamples.
\newblock In preparation, 2014.

\bibitem{MR1220905}
Uffe Haagerup and Jon Kraus.
\newblock Approximation properties for group {$C^*$}-algebras and group von
  {N}eumann algebras.
\newblock {\em Trans. Amer. Math. Soc.}, 344(2):667--699, 1994.

\bibitem{MR1079871}
Mogens~Lemvig Hansen.
\newblock Weak amenability of the universal covering group of {${\rm
  SU}(1,n)$}.
\newblock {\em Math. Ann.}, 288(3):445--472, 1990.

\bibitem{MR1487204}
Nigel Higson and Gennadi Kasparov.
\newblock Operator {$K$}-theory for groups which act properly and isometrically
  on {H}ilbert space.
\newblock {\em Electron. Res. Announc. Amer. Math. Soc.}, 3:131--142
  (electronic), 1997.

\bibitem{MR1821144}
Nigel Higson and Gennadi Kasparov.
\newblock {$E$}-theory and {$KK$}-theory for groups which act properly and
  isometrically on {H}ilbert space.
\newblock {\em Invent. Math.}, 144(1):23--74, 2001.

\bibitem{MR1180643}
Paul Jolissaint.
\newblock A characterization of completely bounded multipliers of {F}ourier
  algebras.
\newblock {\em Colloq. Math.}, 63(2):311--313, 1992.

\bibitem{MR1756981}
Paul Jolissaint.
\newblock Borel cocycles, approximation properties and relative property {T}.
\newblock {\em Ergodic Theory Dynam. Systems}, 20(2):483--499, 2000.

\bibitem{MR1962471}
Paul Jolissaint.
\newblock Haagerup approximation property for finite von {N}eumann algebras.
\newblock {\em J. Operator Theory}, 48(3, suppl.):549--571, 2002.

\bibitem{MR1468230}
Richard~V. Kadison and John~R. Ringrose.
\newblock {\em Fundamentals of the theory of operator algebras. {V}ol. {II}},
  volume~16 of {\em Graduate Studies in Mathematics}.
\newblock American Mathematical Society, Providence, RI, 1997.
\newblock Advanced theory, Corrected reprint of the 1986 original.

\bibitem{MR3146826}
S{\o}ren Knudby.
\newblock Semigroups of {H}erz--{S}chur multipliers.
\newblock {\em J. Funct. Anal.}, 266(3):1565--1610, 2014.

\bibitem{MR0245725}
Bertram Kostant.
\newblock On the existence and irreducibility of certain series of
  representations.
\newblock {\em Bull. Amer. Math. Soc.}, 75:627--642, 1969.

\bibitem{MR0399361}
Bertram Kostant.
\newblock On the existence and irreducibility of certain series of
  representations.
\newblock In {\em Lie groups and their representations ({P}roc. {S}ummer
  {S}chool, {B}olyai {J}\'anos {M}ath. {S}oc., {B}udapest, 1971)}, pages
  231--329. Halsted, New York, 1975.

\bibitem{MR2838352}
Vincent Lafforgue and Mikael De~la Salle.
\newblock Noncommutative {$L^p$}-spaces without the completely bounded
  approximation property.
\newblock {\em Duke Math. J.}, 160(1):71--116, 2011.

\bibitem{MR0239002}
Horst Leptin.
\newblock Sur l'alg\`ebre de {F}ourier d'un groupe localement compact.
\newblock {\em C. R. Acad. Sci. Paris S\'er. A-B}, 266:A1180--A1182, 1968.

\bibitem{MR0044536}
George~W. Mackey.
\newblock Induced representations of locally compact groups. {I}.
\newblock {\em Ann. of Math. (2)}, 55:101--139, 1952.

\bibitem{MR2914879}
Narutaka Ozawa.
\newblock Examples of groups which are not weakly amenable.
\newblock {\em Kyoto J. Math.}, 52(2):333--344, 2012.

\bibitem{MR2680430}
Narutaka Ozawa and Sorin Popa.
\newblock On a class of {${\rm II}_1$} factors with at most one {C}artan
  subalgebra.
\newblock {\em Ann. of Math. (2)}, 172(1):713--749, 2010.

\bibitem{MR0412827}
Gert~K. Pedersen and Masamichi Takesaki.
\newblock The {R}adon-{N}ikodym theorem for von {N}eumann algebras.
\newblock {\em Acta Math.}, 130:53--87, 1973.

\bibitem{MR767264}
Jean-Paul Pier.
\newblock {\em Amenable locally compact groups}.
\newblock Pure and Applied Mathematics (New York). John Wiley \& Sons Inc., New
  York, 1984.
\newblock A Wiley-Interscience Publication.

\bibitem{MR1818047}
Gilles Pisier.
\newblock {\em Similarity problems and completely bounded maps}, volume 1618 of
  {\em Lecture Notes in Mathematics}.
\newblock Springer-Verlag, Berlin, expanded edition, 2001.
\newblock Includes the solution to ``The Halmos problem''.

\bibitem{icc-extension}
Jean-Philippe Pr{\'e}aux.
\newblock Group extensions with infinite conjugacy classes.
\newblock {\em Confluentes Mathematici}, 5(1):73-92, 2013.

\bibitem{MR2279097}
{\'E}ric Ricard and Quanhua Xu.
\newblock Khintchine type inequalities for reduced free products and
  applications.
\newblock {\em J. Reine Angew. Math.}, 599:27--59, 2006.

\bibitem{MR1323684}
Allan~M. Sinclair and Roger~R. Smith.
\newblock The {H}aagerup invariant for von {N}eumann algebras.
\newblock {\em Amer. J. Math.}, 117(2):441--456, 1995.

\bibitem{MR2433341}
Allan~M. Sinclair and Roger~R. Smith.
\newblock {\em Finite von {N}eumann algebras and masas}, volume 351 of {\em
  London Mathematical Society Lecture Note Series}.
\newblock Cambridge University Press, Cambridge, 2008.

\bibitem{steenstrup1}
Troels Steenstrup.
\newblock Fourier multiplier norms of spherical functions on the generalized
  {L}orentz groups.
\newblock Preprint, \href{http://arxiv.org/pdf/0911.4977}{arXiv:0911.4977}, 2009.

\bibitem{MR1873025}
M.~Takesaki.
\newblock {\em Theory of operator algebras. {I}}, volume 124 of {\em
  Encyclopaedia of Mathematical Sciences}.
\newblock Springer-Verlag, Berlin, 2002.
\newblock Reprint of the first (1979) edition, Operator Algebras and
  Non-commutative Geometry, 5.

\bibitem{amenable}
John von Neumann.
\newblock Zur allgemeinen {T}heorie des {M}a\ss es.
\newblock {\em Fund. Math.}, 13:73--116, 1929.

\bibitem{MR0448108}
Simon Wassermann.
\newblock Injective {$W\sp*$}-algebras.
\newblock {\em Math. Proc. Cambridge Philos. Soc.}, 82(1):39--47, 1977.

\bibitem{MR1373647}
Guangwu Xu.
\newblock Herz-{S}chur multipliers and weakly almost periodic functions on
  locally compact groups.
\newblock {\em Trans. Amer. Math. Soc.}, 349(6):2525--2536, 1997.

\end{thebibliography}

\end{document}